\documentclass[12pt]{article}
\usepackage{hyperref}
\usepackage{fullpage}
\usepackage{amsmath,bm}
\usepackage{amsfonts}
\usepackage{graphicx}
\newtheorem{theorem}{Theorem}

\newtheorem{proposition}[theorem]{Proposition}

\newenvironment{proof}[1][Proof]{\noindent\textbf{#1.} }{\ \rule{0.5em}{0.5em}}

\begin{document}

\title{Simplified normal forms\\ near a degenerate elliptic fixed point\\
in  two-parametric families of area-preserving maps
}
\author{Vassili Gelfreich${}^1$\footnote{This work was partially
supported by a grant from EPSRC.}{}~{}
and
Natalia Gelfreikh${}^2$
\\[24pt]
${}^1$\small Mathematics Institute, University of Warwick,\\
\small Coventry, CV4 7AL, UK\\
\small E-mail: v.gelfreich@warwick.ac.uk\\[12pt]
${}^2$\small Department of Higher Mathematics and Mathematical Physics,\\
\small Faculty of Physics, Faculty of Liberal Arts and Sciences\\ 
\small St.Petersburg State University, Russia\\
\small E-mail: gelfreikh@mail.ru}
\date{December 6, 2013}

\maketitle

\begin{abstract}
We derive simplified normal forms for an area-preserving map in a neighbourhood
of  a degenerate resonant elliptic fixed point. Such  fixed points appear in generic
two-parameter families of area-preserving maps. We also derive a simplified
normal form for a generic two-parametric unfolding. The normal forms
are used to analyse bifurcations of $n$-periodic orbits.
In particular, for $n\ge6$ we find regions of parameters where the
normal form has ``meandering'' invariant curves.
\end{abstract}

\section{Introduction}

In a Hamiltonian system small oscillations around a periodic orbit 
are often described using the normal form theory~\cite{SM,AKN}. In the case of two degrees of freedom
the Poincar\'e section is used to reduce the problem 
to studying a family of area-preserving maps in a neighbourhood of a fixed point. The Poincar\'e map depends 
on the energy level and possibly on other parameters involved in  the problem. 
A sequence of coordinate changes is used 
to transform the map to a normal form. In the absence of resonances 
the normal form is a rotation of the plane, and the angle
of the rotation depends on the amplitude. In the presence of resonances
the normal form is more complicated.

Let us describe the normal form
theory in more details.
Let $F_0:\mathbb{R}^{2}\to \mathbb{R}^{2}$ be an area-preserving map. We assume that $F_0$
 also preserves orientation. Let the origin be a fixed point:
\begin{equation*}
F_0(0)=0.
\end{equation*}%
Since $F_0$ is area-preserving $\det DF_0(0)=1$. Therefore
the two eigenvalues of the Jacobian matrix  
$DF_0(0)$ are $\lambda_0 $ and $\lambda_0 ^{-1}$.
These eigenvalues are often called {\em multipliers\/}
of the fixed point.  We will consider an elliptic fixed point when $\lambda_0$ is not real.
As the map is real-analytic the multipliers are 
complex conjugate, i.e. $\lambda_0^{-1}=\lambda_0^*$,
and consequently belong to the unit circle, i.e. $|\lambda_0|=1$. 
Note that in our case $\lambda_0 \ne\pm 1$ as it is assumed non-real.

There is a linear area-preserving change of variables
such that the Jacobian of $F_0$ takes the form of
a rotation:
\begin{equation}\label{Eq:rotation}
DF_0(0)=R_{\alpha_0 }\qquad\mbox{where }R_{\alpha_0}=\left(
\begin{array}{cc}
\cos \alpha_0 & -\sin \alpha_0 \\
\sin \alpha_0 & \cos \alpha_0%
\end{array}%
\right).
\end{equation}
where the rotation angle $\alpha_0$ is related to the multiplier: $\lambda_0=e^{i\alpha_0}$. 

The classical normal form theory   \cite{AKN} states that there 
is a formal area-preserving change of coordinates which transforms $F_0$ into the resonant normal form
$N_0$ such that the formal series $N_0$ commutes with the rotation: $N_0R_{\alpha_0}=R_{\alpha_0} N_0$.
Following  the
method suggested in \cite{Tak74} (see e.g. \cite{BGSTT1993,GG2009}), we 
consider a formal series $H_0$ such that
\begin{equation}
N_0=R_{\alpha_0}\Phi^1_{H_0}
\end{equation}
where $\Phi_{H_0}^t$ is a flow generated by the Hamiltonian $H_0$. The Hamiltonian is invariant
with respect to the rotation $H_0\circ R_{\alpha_0}=H_0$. It follows that the normal form
preserves the Hamiltonian: $H_0\circ N_0=H_0$ since the Hamiltonian flow also preserves $H_0$.
Returning to the original coordinates we conclude that the original map has a non-trivial formal integral
$\hat H_0$.
This integral provides a powerful tool for analysis of the local dynamics
including the stability of the fixed point (see e.g. \cite{Arnold1961}).

It is natural to describe the normal forms using
 the symplectic polar coordinates
$(I,\varphi)$ defined by the equations
\begin{equation}\label{Eq:polar}
x=\sqrt{2I}\cos\varphi\,,\quad
y=\sqrt{2I}\sin\varphi\,.
\end{equation}
A fixed point is called {\em resonant} if there exists $n\in \mathbb{N}$ such that
$\lambda_0^{n}=1$. The least positive $n$ is called the {\em order of the resonance}.
In the resonant case  the formal Hamiltonian takes the form \cite{AKN}:
\begin{equation}\label{Eq:inter}
H_0(I,\varphi)=I^2\sum_{k\ge 0} a_kI^k+\sum_{k\ge 1}\sum_{j\ge0} b_{kj}I^{kn/2+j}\cos (k n\varphi +\beta_{kj})
\end{equation}
where $a_k$, $b_{kj}$ and $\beta_{kn}$ are real coefficients. 
The normal form is not unique.
In the paper \cite{GG2009} we proved that in the non-degenerate case, namely if 
$a_0 b_{10}\ne 0$ for $n \ge 4$ or $b_{10}\ne 0$ for $n=3$, the Hamiltonian
can be simplified substantially: there is a canonical formal change of variables
which transforms the Hamiltonian to the form
\begin{equation}\label{Eq:inter0}
H_0(I,\varphi)=I^2 A(I)+I^{n/2} B(I)\cos (n\varphi)\,,
\end{equation}
where $A$ and $B$ are formal series in powers of $I$.
We refer the reader to paper \cite{GG2009} for the detail discussions of possible
simplifications for the formal series $A$ and $B$ which depend on the order 
of the resonance~$n$. In particular, for $n\ge4$ the series $B(I)$ contains even powers only.
We note that (\ref{Eq:inter0}) involves formal series in a single variable $I$
only and therefore is substantially simpler than  (\ref{Eq:inter}) which involves series in two variables.

In this paper we consider the case when the fixed point of $F_0$ has a degeneracy of the lowest 
co-dimension, i.e.,
in  (\ref{Eq:inter}),
\begin{equation}\label{Eq:nondegen1}
a_0=0,\qquad a_1\ne0,\qquad b_{10}\ne0\,.
\end{equation}
We prove that 
under these assumptions the normal form Hamiltonian (\ref{Eq:inter}) can be transformed to
a simplified form which looks similar to (\ref{Eq:inter0}). The detailed description
of the simplified normal form is given in Section~\ref{Se:normalforms}.
We also prove that the simplified normal form is unique and therefore in general there is no room
for further simplifications.

It is natural to consider a generic unfolding by considering
the map $F_0$ as a member of a generic two-parameter family. In Section~\ref{Se:normalforms}
we provide simplified normal forms for such families and discuss their uniqueness.

Of course, in general the series of the normal form theory are expected to diverge. 
Nevertheless they provide rather accurate information about the dynamics
of the original map. For example, it follows that 
 for any $m\in\mathbb N$ 
the partial sum $\hat H_0^m$ (a polynomial which includes all
terms of $\hat H_0$ up to the order $m$) satisfies
$$
\hat H_0^m\circ F_0-\hat H_0^m=O_{m+1}\,,
$$
where $O_{m+1}$ stands for an analytic function 
which has a Taylor series without terms of order less than $m+1$.
In particular, the implicit function theorem can be used
to show that the map $F_0$ has $n$-periodic points near
critical points of the Hamiltonian $\hat H_0^m$,
and KAM theory can be used to establish existence
of invariant curves \cite{Arnold1961,AKN}.

In the case of a generic one-parameter unfolding of a non-degenerate
resonant elliptic fixed point, the normal form 
provides a description for a chain of islands which is born from the
origin when the unfolding parameter changes its sign~\cite{AKN,Meyer1970,SM,SV2009}.
A typical picture is shown on Figures~\ref{Fig:islands7}(a) and~(b).
This analysis distinguishes the cases of week ($n\ge5$) and strong ($n\le4$) resonances.

\begin{figure}[t]
\begin{center}
\includegraphics[width=3.8cm]{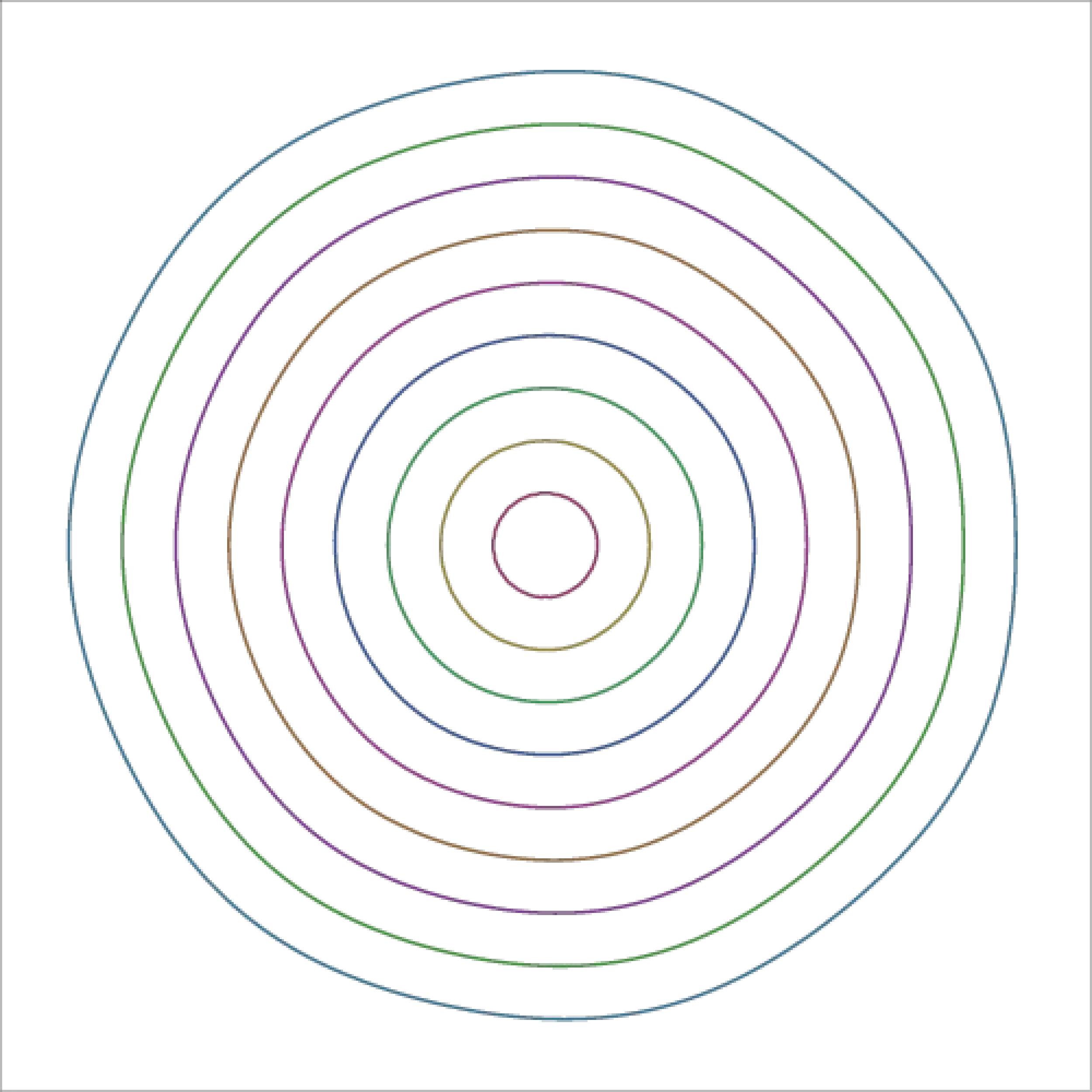}\quad
\includegraphics[width=3.8cm]{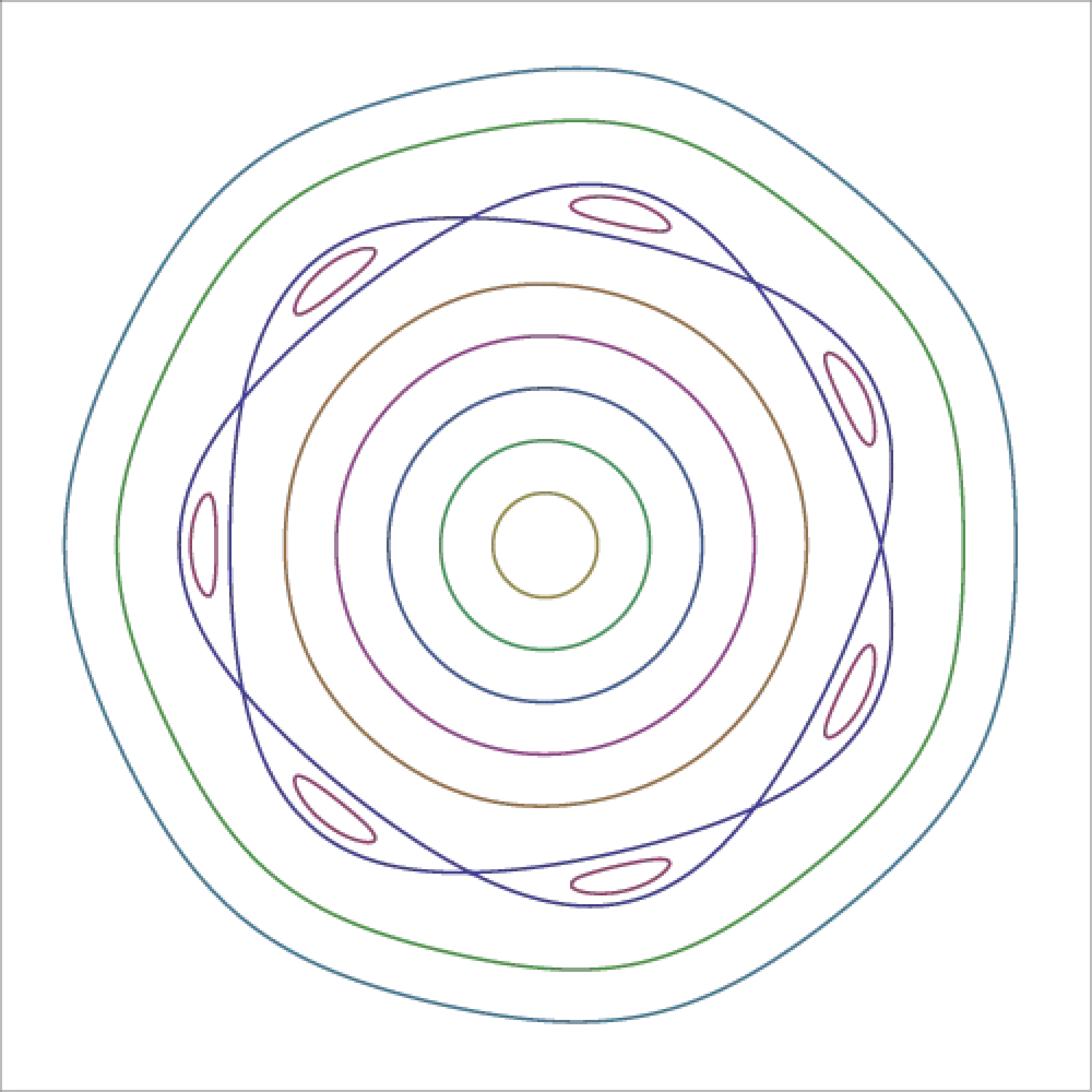}\\
(a)\kern3.8cm(b)
\end{center}
\caption{Level lines of the normal form Hamiltonian: typical pictures for 
$n=7$. 
\label{Fig:islands7}}
\end{figure}

In this paper we use the normal forms to describe  bifurcations of $n$-periodic orbits which appear
in a generic two-parameter unfolding of a degenerate map $F_0$.
In Section~\ref{Se:models} we analyse bifurcations of $n$-periodic points using 
model normal form Hamiltonians for $n\ge7$, $n=6$ and $n=5$ separately.
It is interesting to note that  bifurcation diagrams contain  sectors where
the leading part of the normal form does not satisfy the twist condition required by a standard KAM theory
and invariant curves of ``meandering type" are observed similar to ones
described in the papers~\cite{Simo1998,DL2000,Haro2002,FC2003}.

Finally we note that Theorem~\ref{Thm:h0_n34} of the next section imply that in the case $n=3,4$
the lowest degeneracy has co-dimension three.

\section{Unique normal forms\label{Se:normalforms}}

Suppose that $F_0$ is a real-analytic  area-preserving map with an elliptic fixed point at the origin.
We note the theory repeats almost literally in the $C^\infty$ category.
Let $\lambda_0$ be the multiplier of the fixed point and suppose that 
$\lambda_0^n=1$ for some $n>2$ and $\lambda_0^k\ne1$ for all integer values of $k$, $1\le k<n$.
Then there is an analytic area-preserving change of variables which eliminates
all non-resonant terms from the Taylor series of $F_0$ up to the order $n+1$ \cite{SM,AKN}.
So we assume that $F_0$ is already transformed to this form.

It is convenient to rewrite $F_0$ identifying the plane with coordinates $(x,y)$ and  the complex plane of 
the variable $z=x+iy$. Equation (\ref{Eq:polar}) implies  $z=\sqrt{2I}e^{i\varphi}$.
In the complex notations the transformed map $F_0$ takes the form
\begin{equation}\label{Eq:prenorm0}
 f_0=a(z\bar z)z + b(z\bar z)\bar z^{n-1}+O(|z|^{n+1})
\end{equation}
where $a,b$ are polynomial in $z\bar z$. The coefficients of the polynomials are
complex, in particular, $a(0)=\lambda_0$. In \cite{GG2009} we studied the
normal forms of the map in the generic case when $a'(0)\ne0$
and $b(0)\ne0$. A resonant fixed point with the degeneracy of the twist term
corresponds to 
$a(0)^n=1$ and $a'(0)=0$.
We note that in the space of real area-preserving maps 
such maps are of co-dimension two. Indeed, 
the area-preserving property of the map implies that $a(0)$ is on the unit
circle and $a(0)^{-1}a'(0)$ is purely imaginary. Consequently, in a generic
situation two real parameters are required to satisfy these two condition
simultaneously. Note that another obvious degeneracy of a resonant
fixed point corresponds to  $b(0)=0$. 
As the coefficient $b(0)$ is complex,
this 
degeneracy  has co-dimension three and is not considered in this paper.

\begin{theorem}\label{Thm:h22eq0}
If  $n\ge5$, $a'(0)=0$, $a''(0)\ne0$ and $b(0)\ne0$,
then there is 
a formal canonical change of variables which conjugates $F_0$ with $R_{\alpha_0}\circ\Phi^1_{H_0}$,
where the formal Hamiltonian $H_0$ has the form
\[
H_0(I,\varphi )=I^{3} A_0(I)+I^{n/2} B_0(I)\cos n\varphi,
\]
where $A_0(I)=\sum_{k\ge0} a_k {I}^k$ and $B_0(I)=\sum_{k\ge0} b_k {I}^k$
are formal series in $I$ with real coefficients such that 
\begin{itemize}
\item
if $n=5$, then $a_k=0$ for $k=1 \pmod 5$ and $b_k=0$ for $k=4 \pmod5$;

\item
if $n\ge 6$, then $b_k=0$ for $k=2 \pmod 3$.
\end{itemize}
The coefficients of the series $A_0$ and $B_0$ are defined uniquely by the map $F_0$
provided the leading order is normalised to ensure $b_0>0$. 
\end{theorem}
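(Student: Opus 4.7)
I will adapt the simplification scheme of~\cite{GG2009} to the degenerate situation $a_{0}=0$, $a_{1}\ne 0$. Starting from the classical resonant formal normal form~(\ref{Eq:inter}), which already represents $F_{0}$ as $R_{\alpha_{0}}\circ\Phi^{1}_{H_{0}}$, grade the algebra of $R_{\alpha_{0}}$-invariant formal Hamiltonians by weight with $\deg I=2$, so each monomial $I^{p}\cos(kn\varphi+\beta)$ has weight $2p$ and the Poisson bracket lowers weight by $2$. A preliminary rotation $\varphi\mapsto\varphi-\beta_{10}/n$, combined if necessary with a shift $\varphi\mapsto\varphi+\pi/n$, reduces to $\beta_{10}=0$ and $b_{10}>0$; the leading part of $H_{0}$ is then $L=a_{1}I^{3}+b_{10}I^{n/2}\cos n\varphi$.

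The simplification then proceeds by induction on the weight $w$. At each step one applies a canonical change $\Phi^{1}_{h_{w}}$ with $h_{w}$ an $R_{\alpha_{0}}$-invariant Hamiltonian, transforming $H_{0}\mapsto H_{0}-\{H_{0},h_{w}\}+(\text{higher weight})$; the homological equation $\{L,h_{w}\}=R_{w}$ is to be solved, with $R_{w}$ the unwanted part of $H_{0}$ at weight $w$. The direct calculation
\[
\{a_{1}I^{3},\,I^{q}\sin(kn\varphi+\gamma)\}=3a_{1}kn\,I^{q+2}\cos(kn\varphi+\gamma)
\]
shows that the twist bracket kills every $I^{p}\cos(kn\varphi+\gamma)$ with $k\ge 1$ as long as the admissibility constraint $q=p-2\ge kn/2$ is met, i.e.\ every $b_{kj}$-entry of~(\ref{Eq:inter}) with $j\ge 2$. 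The remaining ``bottom'' entries $j=0,1$ and the higher harmonics $k\ge 2$ are handled by brackets involving the resonant part of $L$; in particular
\[
\{b_{10}I^{n/2}\cos n\varphi,\,I^{p}\sin n\varphi\}=\tfrac{b_{10}n}{4}I^{n/2+p-1}\bigl[(2p+n)+(n-2p)\cos 2n\varphi\bigr]
\]
produces simultaneously a pure $I$-term and a $\cos 2n\varphi$ term at the same weight, while generators of the form $cI^{q}$ yield $\sin n\varphi$ contributions via $\{b_{10}I^{n/2}\cos n\varphi,\,cI^{q}\}$. Executing the elimination in increasing weight, I first remove all $k\ge 2$ harmonics, then align the phases of the $k=1$ harmonics with the fixed $\beta_{10}=0$. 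What survives is precisely a Hamiltonian of the form $I^{3}A_{0}(I)+I^{n/2}B_{0}(I)\cos n\varphi$.

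The additional vanishing conditions reflect arithmetic coincidences in the weights of the two parts of $L$. For $n=5$, where the resonant term has weight $5$ and the twist has weight $6$, combining both brackets produces period-$5$ cancellations and forces $a_{k}=0$ for $k\equiv 1\pmod 5$ and $b_{k}=0$ for $k\equiv 4\pmod 5$. For $n\ge 6$ the twist weight $6=2\cdot 3$ dominates, and tracking the feedback of $\{I^{n/2}\cos n\varphi,\cdot\}$ through the twist coefficients of $I^{3}A_{0}(I)$ produces the congruence $k\equiv 2\pmod 3$ for the $b_{k}$ that can be absorbed. Uniqueness is proved by the same inductive scheme applied to two candidate simplified forms $H_{0}$ and $\tilde H_{0}$: any rotationally-invariant canonical transformation relating them must satisfy a homological equation whose only solution, given the sign normalisation $b_{0}>0$, is the identity. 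The main obstacle I anticipate is the careful identification of the image of $\{L,\cdot\}$ at each weight, especially in the borderline cases $n=5,6$ where the two pieces of $L$ have comparable weight and the elimination is far less cleanly triangular than in the non-degenerate setting of~\cite{GG2009}.
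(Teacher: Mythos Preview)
Your overall Lie-series/homological-equation strategy is the same as the paper's, but there is a real gap in how you set up the induction. With $\deg I=2$, the two pieces of your ``leading part'' $L=a_{1}I^{3}+b_{10}I^{n/2}\cos n\varphi$ have weights $6$ and $n$, so for $n\ne 6$ your $L$ is not homogeneous and $\{L,\chi_{w}\}$ lands in \emph{two} different weight spaces (namely $w+4$ from the twist and $w+n-2$ from the resonant piece). The ``induction on the weight $w$'' therefore does not close as written: you cannot solve $\{L,h_{w}\}=R_{w}$ within a single graded component. Your fallback --- use the twist bracket first, then mop up with the resonant bracket --- is a sketch, not an argument, and the sentences explaining the congruences (``period-$5$ cancellations'', ``tracking the feedback \dots\ produces the congruence $k\equiv 2\pmod 3$'') are assertions rather than derivations.

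The paper fixes this by replacing the naive weight with the $\delta$-order $\delta(z^{k}\bar z^{l})=\tfrac12(k+l)-\tfrac{n-6}{2n}|k-l|$ for $n\ge 7$ (with $n=5,6$ handled separately), engineered precisely so that $(z\bar z)^{3}$ and $z^{n}+\bar z^{n}$ both have degree~$3$. Then $\Lambda\chi=[-2i\{h_{3},\chi\}]_{p+2}$ is an honest graded operator $\mathcal H^{n}_{p}\to\mathcal H^{n}_{p+2}$, and the congruence conditions on $a_{k},b_{k}$ fall out of an explicit computation of $\ker\Lambda$ and a complement to the image of $\Lambda$ on the monomial basis $Q_{p,j}$, tabulated by the residue of $p$ modulo $3$ (modulo $5$ when $n=5$). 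That table \emph{is} the proof of the congruences, and it is exactly the ``careful identification of the image of $\{L,\cdot\}$'' that you correctly flag as the main obstacle but do not attempt.
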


This theorem follows  from Proposition~\ref{Pro:h22is0} of Section~\ref{Se:h22is0}.
The proofs of this and other theorems from this section 
are based on a development of the Lee series method~\cite{Deprit1969,Rink2001,GG2009,GG2010}. 
We note that Theorem~\ref{Thm:h22eq0} provides a complete set of formal
invariants for the map $F_0$.

\medskip

The following theorem covers the cases of $n=3$ and $n=4$.

\begin{theorem}\label{Thm:h0_n34}
If  $n\in\{\,3,4\,\}$ and  $b(0)\ne0$,
 then there is 
a formal canonical change of variables which conjugates $F_0$ with $R_{\alpha_0}\circ\Phi^1_{H_0}$,
where the formal Hamiltonian $H_0$ has the form
\[
H_0(I,\varphi )=I^{2} A_0(I)+I^{n/2} B_0(I)\cos n\varphi,
\]
where $A_0(I)=\sum_{k\ge0} a_k {I}^k$ and $B_0(I)=\sum_{k\ge0} b_k {I}^k$
are formal series in $I$ with real coefficients such that 
\begin{itemize}
\item
if $n=3$, then $a_k=0$ for $k=0\pmod3$ and $b_k=0$ for $k=2 \pmod 3$;

\item
if $n=4$, then $a_k=0$ for $k=1 \pmod 4$ and $b_k=0$ for $k=3 \pmod 4$;

\end{itemize}
The coefficients of the series $A_0$ and $B_0$ are defined uniquely by the map $F_0$
provided the leading order is normalised to ensure $b_0>0$. 
\end{theorem}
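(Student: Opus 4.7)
The plan is to follow the Lie-series framework that underlies the proof of Theorem~\ref{Thm:h22eq0}, adapting it to the strong resonances $n=3,4$. Starting from the classical resonant normal form~(\ref{Eq:inter}), in which $H_0$ is already $R_{\alpha_0}$-invariant, I would apply a sequence of formal canonical changes generated by the time-one flows of $R_{\alpha_0}$-invariant Hamiltonians $\chi$. Because $\chi$ commutes with the rotation, conjugation replaces $H_0$ by the $R_{\alpha_0}$-invariant series $H_0+\{H_0,\chi\}+O(\chi^2)$, and at each order the simplification reduces to solving a homological equation of the form $\mathcal{L}\chi=g$ on the finite-dimensional graded space of $R_{\alpha_0}$-invariant polynomials, where $\mathcal{L}:\chi\mapsto\{H_0^{(0)},\chi\}$ and $H_0^{(0)}$ is the lowest-order part of $H_0$.

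The key difference from the $n\ge 5$ analysis is the identity of $H_0^{(0)}$. Under the hypothesis $b(0)\ne0$, for $n=3$ the leading term is the resonant $b_0 I^{3/2}\cos 3\varphi$ of order $3/2$, strictly below the twist $a_0 I^2$; for $n=4$ the twist $a_0 I^2$ and the resonant $b_0 I^2\cos 4\varphi$ both sit at order $2$. A direct computation, for example $\{I^{3/2}\cos 3\varphi,I^{3/2}\sin 3\varphi\}=-\tfrac{9}{2}I^2$, shows that $\mathcal{L}$ couples the twist and resonant sectors and raises the order by $1/2$ in the case $n=3$ and by $1$ in the case $n=4$. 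I would write out the matrix of $\mathcal{L}$ on the natural monomial basis $I^p$, $I^{p+jn/2}\cos(jn\varphi)$, $I^{p+jn/2}\sin(jn\varphi)$ and read off its image together with a canonical complement at each order. The image should contain every higher harmonic $\cos(kn\varphi)$ with $k\ge2$, every $\sin(n\varphi)$ term (so that the phases $\beta_{1j}$ of the surviving first harmonic align into a single $\cos n\varphi$), and a specific subsequence of twist and first-harmonic coefficients; the complement then yields the surviving coefficients, whose indices follow the modular patterns stated in the theorem. Uniqueness follows by running the argument in reverse: any $R_{\alpha_0}$-invariant $\chi$ preserving the simplified form must lie in the kernel of $\mathcal{L}$ on this complement at every order, and the only such $\chi$ is trivial up to the discrete rotation $\varphi\mapsto\varphi+\pi/n$, which flips the sign of $\cos n\varphi$ and is ruled out by the normalization $b_0>0$.

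The principal obstacle is the order bookkeeping, which is noticeably more intricate than in the $n\ge 5$ case. For $n=3$ the half-integer and integer orders interleave with step $1/2$ and $\mathcal{L}$ itself steps by $1/2$, so generators chosen at half-integer levels interact nontrivially with twist levels both above and below; for $n=4$ the twist and resonant contributions coincide in order and $\mathcal{L}$ becomes a genuine block rather than a direct sum on each graded slice, and one must check that this block is invertible on the relevant complement whenever $b(0)\ne0$. Extracting the precise modular vanishing patterns $a_k=0$ for $k\equiv 0\pmod 3$, $b_k=0$ for $k\equiv 2\pmod 3$, and the analogous mod-$4$ statements, from this mixed structure, while verifying that the generic coefficients appearing in the block are nonzero, is the delicate technical step that forces a separate treatment from the $n\ge 5$ case.
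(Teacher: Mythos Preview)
Your plan is essentially the same Lie--series/homological--operator strategy the paper uses: reduce to a linear equation $\tilde h_{p+1}=h_{p+1}+\Lambda\chi_p$ at each graded level, identify a complement to the image of $\Lambda$, and read off which coefficients survive. The one substantive difference is that the paper carries this out in the complex coordinates $(z,\bar z)$ rather than in $(I,\varphi)$. In $(z,\bar z)$ every resonant term is a genuine monomial $Q_{m,j}=z^{m+2j}\bar z^{m-2j}$ (for $n=4$), the reality condition becomes $c_{-j}=c_j^*$, and because the structure constants are purely imaginary the system splits cleanly into two decoupled real systems for $\mathrm{Re}\,c_j$ and $\mathrm{Im}\,c_j$. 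The imaginary-part system is solved by a simple backward recursion (this is your ``all $\sin n\varphi$ terms lie in the image''), while the real-part system is tridiagonal and its non-degeneracy is checked by a continuant argument; it is this case-by-case analysis according to $p\bmod 4$ that actually produces the vanishing pattern $a_k=0$ for $k\equiv1\pmod4$, $b_k=0$ for $k\equiv3\pmod4$. Your $(I,\varphi)$ basis is equivalent in principle, but you lose this automatic decoupling and will have to reconstruct it by hand; the half-integer bookkeeping for $n=3$ is likewise avoided entirely in $(z,\bar z)$.

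What your proposal is still missing is exactly this computation. The sentences ``the image should contain every higher harmonic\ldots'' and ``the complement then yields the surviving coefficients, whose indices follow the modular patterns stated'' are the theorem, not its proof: the modular pattern comes out only after one writes the matrix of $\Lambda$ on each graded piece and checks, case by case in the residue of $p$, which subsystems are invertible. The paper does this explicitly (including the continuant/tridiagonal step), and the uniqueness argument also relies on knowing $\ker\Lambda$ explicitly at each level (it is spanned by powers of $h_2$ on even levels and trivial on odd ones), which is finer than the ``only discrete rotation'' heuristic you sketch. So your outline is correct and aligned with the paper, but until the block computation is actually carried out it does not yet establish the specific mod-$3$ and mod-$4$ patterns or the uniqueness claim.
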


The case $n=3$ was discussed in \cite{GG2009}. The proof for $n=4$ is in Section~\ref{Se:h22is0}.
In this case, Theorem~\ref{Thm:h0_n34} has an advantage over the normal form of the paper
 \cite{GG2009} as the present version eliminates assumptions on the coefficients of 
 the twist part of the map which allows a unified treatment for a wider class of maps
 without increasing the complexity of the simplified normal form.

\medskip

Let $F_p$ be a two-parametric unfolding of $F_0$, where $p=(p_1,p_2)$.
We assume that 
\begin{enumerate}
\item[(1)] the family $F_p$  has an elliptic fixed point
at the origin, i.e., $F_p(0)=0$ for all small $p$; 
\item[(2)] $F'_p(0)=R_{\alpha_p}$ where the angle $\alpha_p$ is an analytic function of $p$ and
$\alpha_0=2\pi\frac{m}{n}$, i.e., $p=0$ corresponds to a resonance of order $n$.
\end{enumerate}
Suppose that $p$ is sufficiently small to ensure that $|\alpha_p-\alpha_0|<\frac{\pi}{2n^2}$. Then
 $\alpha_p$ does not satisfy any resonant condition of order less than $2n$ excepting the original one
 which corresponds to $\alpha_p=\alpha_0$.
According to the normal form theory,  there is an analytic area-preserving change of variables
which eliminates all non-resonant terms from the Taylor series of $F_p$
up to the order $n+1$.
We assume that $F_p$ is already transformed to this form.
Then in the complex coordinates the map $ F_p$ has the form
\begin{equation}\label{Eq:prenorm}
f_p=a_p(z\bar z)z + b_p(z\bar z)\bar z^{n-1}+O(z^{n+1})
\end{equation}
where $a_p$ and $b_p$ are polynomial in $z\bar z$, which depend analytically on  $p$,
and $a_p(0)=\lambda_p$.
We introduce new parameters $\delta$ and $\nu$ by
\begin{equation}\label{Eq:deltanu}
\delta=i\log\frac{\lambda_p}{\lambda_0}=\alpha_0-\alpha_p
\qquad\mbox{and}\qquad \nu=\frac{ia_p'(0)}{\lambda_p}
.
\end{equation}
The preservation of the area by the map $ f_p$ implies that
 $\delta$, $\nu$ are real. Moreover, they are defined uniquely,
i.e., they are independent of the choice of the coordinate change $C_p$
which transforms the original map $F_p$ to the form (\ref{Eq:prenorm}).

If  the Jacobian of the map $(p_1,p_2)\mapsto (\delta,\nu)$ does not vanish
at $p=0$, the family $F_p$ is  a generic unfolding of $F_0$. 
Then the inverse function theorem implies that $p$ can be expressed in terms of $(\delta,\nu)$.
 From now on we use $p=(\delta,\nu)$ instead of the original parameters.

The normal form theory states that there is a formal change of variables which transforms $F_p$
to its normal form $N_p=R_{\alpha_0}\circ \Phi^1_{H_p}$ where the formal sum $H_p$ includes only 
resonant monomials, i.e.,  monomials of the form $h_{j,k,l,m}z^j\bar z^k\delta^l\nu^m$ with $k=j\pmod n$.
The transformation to the normal form and the normal form itself are  formal power series in $z,\bar z$ and $\delta,\nu$.

We note that the normal form is  not unique and can be simplified using a formal tangent-to-identity
change of variables.

\begin{theorem} \label{Thm:nf4families}
Let $n\ge5$.
If $F_0$ has $a_0'(0)=0$, $a_0''(0)\ne0$ and $b_0(0)\ne 0$, 
and $F_p$ is a generic unfolding of $F_0$
with parameter  $p=(\delta,\nu)$, 
 then there is 
a formal canonical change of variables which conjugates $F_p$ with $R_{\alpha_0}\circ\Phi^1_{H_p}$,
where the formal Hamiltonian $H_p$
has the form
\begin{equation}\label{Eq:nfsimpl}
H_{\delta , \nu}(I, \varphi  )=\delta I +\nu I^2 +I^3 A(I; \delta , \nu)+ I^{n/2} B(I ;\delta , \nu)\cos  n\varphi,
\end{equation}
where 
$$
A(I; \delta , \nu)=\sum_{k,m,j \ge 0} a_{kmj} I^k \delta^m \nu^j, 
\qquad
B(I;\delta , \nu) = \sum_{k,m,j \ge 0} b_{kmj} I^k \delta^m  \nu^j   ,
$$
and
\begin{itemize}
\item
if $n=5$, then $a_{kmj}=0$ for $k=1 \pmod 5$ and $b_{kmj}=0$ for $k=4 \pmod5$;
\item
if $n\ge 6$, then $b_{kmj}=0$ for $k=2 \pmod 3$.
\end{itemize}
Moreover, the coefficients of the series $A(I; \delta , \nu)$ and
$B(I ;\delta , \nu)$ are defined uniquely.

\end{theorem}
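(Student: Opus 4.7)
The strategy is to adapt the Lie-series normalization used to prove Theorem~\ref{Thm:h22eq0} (via Proposition~\ref{Pro:h22is0}) to families depending formally on the two parameters $(\delta,\nu)$. First I would apply the parametric version of the classical resonant normal form theorem \cite{SM,AKN}: there is a formal canonical change of variables, analytic in $p$, conjugating $F_p$ to $R_{\alpha_0}\circ\Phi^1_{\tilde H_p}$ with $\tilde H_p$ in the ring of $R_{\alpha_0}$-invariants, so its $\varphi$-dependence is only through resonant harmonics $\cos(kn\varphi+\beta_{kj}(\delta,\nu))$ as in (\ref{Eq:inter}), with coefficients that are formal series in $(\delta,\nu)$. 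A direct comparison of the linear and quadratic parts of $f_p$ in $z$ with the first Lie iterates of $\tilde H_p$ then shows that the specific choice (\ref{Eq:deltanu}) of $\delta$ and $\nu$ forces the coefficient of $I$ in $\tilde H_p$ to equal $\delta$ and the coefficient of $I^2$ to equal $\nu$ identically in $p$; this uses area-preservation of $F_p$ and the assumed non-degeneracy of the Jacobian of $p\mapsto(\delta,\nu)$.

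Having reduced $\tilde H_p$ to $\delta I+\nu I^2+R$ with $R$ of higher order, I would run the Lie-series procedure of \cite{GG2009,GG2010} with tangent-to-identity canonical generators $S_N$ introduced step by step, graded by a weighted degree in which $\deg I=\deg\nu=2$ and $\deg\delta=4$, so that $H^{(0)}:=\delta I+\nu I^2$ is homogeneous of weighted degree six and the cosine terms $I^{kn/2+j}\cos(kn\varphi+\beta)$ sit at weighted degree $kn+2j$. The main obstacle is that the ``frequency'' $\partial_I H^{(0)}=\delta+2\nu I$ has no constant term in $\mathbb{R}[[I,\delta,\nu]]$, so the homological equation $\{S_N,H^{(0)}\}=-(\delta+2\nu I)\partial_\varphi S_N=T$ cannot be solved by naive division. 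This is handled, as in the degenerate unparametrized case, by splitting the equation weighted-piece by weighted-piece and exploiting that on each such finite-dimensional piece the operator of multiplication by $(\delta+2\nu I)$ is injective and its image, composed with $\partial_\varphi^{-1}$ restricted to $\cos(kn\varphi)$-harmonics, exhausts precisely the monomials one wants to eliminate; the $\varphi$-independent residue is absorbed into the surviving series $I^3A(I;\delta,\nu)$, while the only $I$- and $I^2$-coefficients left out are those occupied by $\delta$ and $\nu$ themselves. The additional vanishing of $a_{kmj}$ and $b_{kmj}$ in the stated residue classes modulo $3$ (for $n\ge6$) or modulo $n$ (for $n=5$) is inherited from the proof of Theorem~\ref{Thm:h22eq0}, applied uniformly at each fixed $(\delta,\nu)$-multi-index.

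Uniqueness follows by the standard inductive argument. Suppose two Hamiltonians of the form (\ref{Eq:nfsimpl}) are conjugate by a formal $R_{\alpha_0}$-equivariant symplectic change of variables with generator $S$; expanding the conjugacy equation weighted-degree by weighted-degree yields cohomological equations whose right-hand side must lie simultaneously in the complement of $\mathrm{image}(\mathrm{ad}_{H^{(0)}})$ selected during the normalization and in that image itself. Hence it vanishes at every degree, forcing the coefficients of $A(I;\delta,\nu)$ and $B(I;\delta,\nu)$ to coincide term by term. Combined with the coordinate-free characterization of $\delta$ and $\nu$ from (\ref{Eq:deltanu}), this completes the uniqueness of the normal form.
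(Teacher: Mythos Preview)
Your proposal has a genuine gap in the choice of homological operator. You take $H^{(0)}=\delta I+\nu I^2$ as the anchor, so that the homological equation reads $\{S,H^{(0)}\}=-(\delta+2\nu I)\,\partial_\varphi S$. But on the space of $\cos(kn\varphi)$-harmonics this operator has image equal to $(\delta+2\nu I)\,\mathbb{R}[[I,\delta,\nu]]\cos(kn\varphi)$, and the cokernel is isomorphic to $\mathbb{R}[[I,\delta,\nu]]/(\delta+2\nu I)\cong\mathbb{R}[[I,\nu]]$ for \emph{every} $k\ge1$. In particular, none of the parameter-free monomials $I^A\cos(kn\varphi)$ can be removed, so all higher harmonics $k\ge2$ survive at $(\delta,\nu)=(0,0)$, and the resulting normal form is not of the shape~(\ref{Eq:nfsimpl}). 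Your appeal to ``inheriting'' the additional vanishing from Theorem~\ref{Thm:h22eq0} does not close this gap: that theorem uses a \emph{different} homological operator (bracket with the leading resonant block at the origin), and you have not explained how two normalizations based on incompatible operators can be combined.

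The paper's argument (Proposition~\ref{Pro:Famh22is0}) avoids this obstruction by using exactly the same homological operator as in the unparametrized case, namely $\Lambda\chi=-2i\{h_3,\chi\}$ with $h_3=a_0(z\bar z)^3+b_0(z^n+\bar z^n)$ for $n\ge6$ (and the analogous $h_5$ for $n=5$). The parameters are incorporated only through the grading: one assigns $\tilde\delta$-weights to $\delta$ and $\nu$ (weight $3$ and $2$ respectively for $n\ge6$) so that $\delta z\bar z$ and $\nu(z\bar z)^2$ sit at $\tilde\delta$-order $4$, i.e.\ \emph{above} the leading block $h_3$ of order~$3$. Then, at each $\tilde\delta$-order and each fixed monomial $\delta^M\nu^J$, the homological equation is literally the one already solved in Section~\ref{Se:h22is0}, and the complement to $\mathrm{Image}\,\Lambda$ found there gives the stated form directly. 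The crucial point you missed is that $\delta$ and $\nu$ should be treated as \emph{perturbation parameters} of higher weight than the non-degenerate cubic/resonant block, not as part of the leading Hamiltonian.
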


The proof of this  theorem is in Section~\ref{Se:Familiesh22}.

\medskip

Theorem~\ref{Thm:nf4families} implies that for $n\ge5$ the qualitative properties of the normal form
can be studied using the following model Hamiltonian:
\begin{equation}\label{Eq:modelN}
h(I,\varphi)=\delta I+\nu I^2+I^3+I^{n/2}\cos n\varphi\,.
\end{equation}
This Hamiltonian keeps only the leading terms of the formal series $A$ and $B$ from the
simplified normal form (\ref{Eq:nfsimpl}).
Note that the normalisation of the coefficients in front of $I^3$ and $I^{n/2}$ 
does not restrict generality of the model, as under the assumptions
of the theorem the normalisation used in 
(\ref{Eq:modelN}) can be achieved by rescaling the variable $I$, the parameters $\delta$ and~$\nu$, 
and the Hamiltonian function $h$.

\medskip

We note that in the case $n=3$ and~$4$, the degeneracy
in the twist term does not affect the reduction to the normal
form.
 Let $\alpha_0=\frac{2\pi}n$ and $\delta=\alpha_0-\alpha$.
Then (\ref{Eq:prenorm}) implies that 
 the map can be represented in the form
$$
f_{\delta,\nu}(z,\bar z)=e^{i(\alpha_0-\delta)}z+a_{\nu,\delta} z^2 \bar z+b_{\nu,\delta} \bar z^{n-1}+O(z^4)
$$
where $a_{\nu,\delta}$ and $b_{\nu,\delta}$ are analytic functions of the parameters.

\begin{theorem} 
Let $\alpha_0=\frac{2\pi}{n}$ with $n=3$ or $4$, and 
 $b_{0,0}\ne 0$. Then for all sufficiently small values of $\nu$
there is 
a formal canonical change of variables which conjugates $F_{\delta,\nu}$ with
$R_{\alpha_0}\circ\Phi^1_{H_{\delta,\nu}}$
where
a formal Hamiltonian $H_{\delta,\nu}$ has the form
\begin{equation}\label{Eq:nfsimpl34}
H_{\delta , \nu}(I, \varphi  )=\delta I +I^2 A(I,\delta ,\nu)+ I^{n/2} B(I ,\delta ,\nu)\cos  n\varphi,
\end{equation}
where 
$$
A(I, \delta ,\nu)=\sum_{k,m,j \ge 0} a_{km}(\nu) I^k \delta^m , 
\qquad
B(I,\delta,\nu) = \sum_{k,m,j \ge 0} b_{km}(\nu) I^k \delta^m   ,
$$
where the coefficients are real-analytic functions of $\nu$
and
\begin{itemize}
\item
if $n=3$, then $a_{km}=b_{km}=0$ for $k=2 \pmod 3$;
\item
if $n=4$, then $a_{km}=0$ for $k=1 \pmod 4$ and $b_{km}=0$ for $k=3 \pmod 4$.
\end{itemize}
Moreover, the coefficients  $a_{km}( \nu)$
and $b_{km}(\nu)$ are defined uniquely.

\end{theorem}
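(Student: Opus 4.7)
The plan is to prove this result as a parametric version of Theorem~\ref{Thm:h0_n34}, treating $\nu$ as a real-analytic parameter throughout. Since $b_{0,0}\ne 0$, continuity gives $b_{\nu,0}\ne 0$ for all sufficiently small $\nu$, so for each fixed such $\nu$ the family $F_{\delta,\nu}$ is a one-parameter $\delta$-unfolding of a map satisfying the hypotheses of Theorem~\ref{Thm:h0_n34}. What is new compared to that theorem is the explicit dependence on $\delta$ and the verification of real-analytic dependence of the normal-form coefficients on $\nu$.

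The construction would proceed in two stages following the Lee series method \cite{Deprit1969,GG2009,GG2010}. In the first stage I apply standard resonant Birkhoff normalization to conjugate $F_{\delta,\nu}$ to $R_{\alpha_0}\circ\Phi^1_{\tilde H_{\delta,\nu}}$, where $\tilde H$ is a formal Hamiltonian containing only resonant monomials of the form $I^k\cos(jn\varphi+\beta_{kj}(\delta,\nu))$. The rotation shift $\alpha_0-\alpha_p=\delta$ contributes the term $\delta I$. The cohomological equations at each order involve divisors $e^{i\ell\alpha_p}-1$ with $\ell\not\equiv 0\pmod n$, which are uniformly bounded away from zero for small $(\delta,\nu)$, so the coefficients of $\tilde H$ depend real-analytically on both parameters. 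In the second stage I simplify $\tilde H$ further by tangent-to-identity formal canonical changes generated by resonant Hamiltonians $S$, producing the transformed Hamiltonian $\tilde H+\{\tilde H,S\}+\tfrac12\{\{\tilde H,S\},S\}+\cdots$. Since $S$ is resonant, the leading bracket $\{\alpha_0 I,S\}$ vanishes, so the effective elimination operator acting on $S$ is $\{\tilde H-\alpha_0 I,\cdot\}$; its image, dominated by Poisson brackets with the leading resonant term proportional to $I^{n/2}\cos n\varphi$, reaches exactly the monomials whose indices lie outside the listed congruence classes. Iterating order by order in $I$ then reduces $\tilde H$ to the target form \eqref{Eq:nfsimpl34}.

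The main obstacle will be the combinatorial bookkeeping needed to certify that the listed congruence pattern is exact, i.e., that every coefficient outside the classes is killable while no coefficient inside the classes can be modified without re-introducing previously-eliminated terms. This boils down to computing the Poisson action of $I^{n/2}\cos n\varphi$ on resonant monomials: for $n=3$ the product of $I^{3/2}\cos 3\varphi$ with $I^{k+1/2}\sin 3\varphi$ spreads into the axisymmetric part and into the $\cos 6\varphi$ harmonic with a fixed nonzero ratio, and this ratio forces exactly the shift $k\equiv 2\pmod 3$; the computation for $n=4$ is analogous, yielding the shifts $k\equiv 1\pmod 4$ in $A$ and $k\equiv 3\pmod 4$ in $B$. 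Uniqueness of the surviving coefficients then follows, and their real-analyticity in $\nu$ is automatic, since each elimination step only involves rational operations whose divisors are either nonresonant small divisors bounded away from zero or positive powers of $b_{\nu,0}$, which is nonvanishing near $\nu=0$ by continuity.
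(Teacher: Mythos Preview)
Your approach is essentially correct and matches the paper's strategy: reduce to a resonant interpolating Hamiltonian and then simplify it via the Lie series method, with the homological operator built from the leading resonant term (which requires only $b_{\nu,0}\ne 0$). The paper packages the family argument slightly differently in Proposition~\ref{Pro:Famh22is0}: rather than fixing $\nu$ and running a one-parameter $\delta$-unfolding, it introduces a weighted order $\tilde\delta(z^k\bar z^l\delta^m\nu^j)=k+l+4m+2j$ for $n\in\{3,4,5\}$ and normalises inductively in that grading, treating both parameters formally. Your organisation, treating $\nu$ as a genuine analytic parameter from the start, is actually better suited to the theorem as stated, since it delivers real-analytic dependence of the coefficients on $\nu$ directly; the paper's formal-in-$\nu$ version would require an extra remark to recover analyticity.

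Two small corrections. First, for $n=4$ the leading block of the Hamiltonian is $a_0(z\bar z)^2+b_0(z^4+\bar z^4)$, so the homological operator $\Lambda\chi=-2i\{h_2,\chi\}$ has contributions from \emph{both} terms, not just from $I^{n/2}\cos n\varphi$; the tridiagonal systems \eqref{Eq:djcj}--\eqref{Eq:d1} in the paper make this explicit. Solvability still hinges only on $b_0\ne 0$, so your conclusion is unaffected, but the sentence ``dominated by Poisson brackets with the leading resonant term proportional to $I^{n/2}\cos n\varphi$'' undersells the $n=4$ case. Second, note that the congruence pattern in this family theorem does not coincide literally with that of Theorem~\ref{Thm:h0_n34} for $n=3$ (compare $k\equiv 2$ here with $k\equiv 0$ there for the $A$-series), so the reduction is not quite ``Theorem~\ref{Thm:h0_n34} with parameters'': the presence of the $\delta I$ term shifts which complement to $\mathrm{Image}\,\Lambda$ is most convenient, and the bookkeeping you flag as the main obstacle must be redone rather than quoted.
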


We note that the theorem implies that in the cases of $n=3$ and $n=4$ the bifurcations
of $n$-periodic points in a generic two-parameter family
 are similar to the generic one-parameter case. This statement
can be checked by introducing properly scaled variables:
if $n=3$, then $J=\delta^{-2}I$ and $\bar h=\delta^{-3}H_{\delta\nu}$. After
an additional change of variables, which depends on the value of $b_0(0)$,
the leading part of the Hamiltonian takes the form $\bar h_0=J+J^{3/2}\cos3\varphi$.
If $n=4$, then  $J=\delta^{-1}I$ and $\bar h=\delta^{-2}H_{\delta\nu}$.
After an additional change of variables, which depends on the value of $b_0(0)$,
the leading part of 
the Hamiltonian takes the form  $\bar h_0=J+J^{2}(a(\nu)+\cos3\varphi)$.
In both cases, the scaling leads to the same Hamiltonian as in the one-parameter case.
We note that for $n=4$ there is a bifurcation which corresponds
to the transition between the stable and unstable sub-cases of the non-degenerate case (i.e. 
the coefficient $a$ crosses $\pm1$).

\section{Bifurcations of $n$-periodic points\label{Se:models}}

In order to study bifurcations of critical points for the normal form 
we use model Hamiltonian functions which involve the least
possible number of terms while preserving the qualitative properties of 
the general case described by the normal form (\ref{Eq:nfsimpl}). 
 In particular, we provide
bifurcation diagrams for saddle critical points located in
a small neighbourhood of the origin 
and illustrate the structure of corresponding critical level sets.

\subsection{$n\ge7$}

For $n\ge7$ the model Hamiltonian function takes the form
\begin{equation}\label{Eq;hn}
h=\delta I+ \nu I^2+I^3+I^{n/2}\cos n\varphi
\end{equation}
where $I,\varphi$ are symplectic polar coordinates~(\ref{Eq:polar}).

If $\delta=\nu=0$, the Hamiltonian $h$ has a minimum at the origin. 
All level lines of $h$  are closed  curves around the origin (similar to Fig.~\ref{Fig:islands7}(a)). 
Therefore the origin is a stable equilibrium of the normal form. Moreover,
since $I^3\gg I^{n/2}$,
it remains stable for all sufficiently small values of $\delta$ and $\nu$.

In order to analyse the Hamiltonian for small $\delta$ and $\nu$, we recall that
critical points of $h$ are defined 
 from the equations $\partial_Ih=\partial_\varphi h=0$.
After computing the derivatives, we conclude that 
all equilibria have either $\varphi=0$ or $\frac{\pi}n\pmod{\frac{2\pi}{n}}$,
and
\begin{equation}\label{Eq:ficpoint}
\delta +2\nu I+3I^2+\sigma_\varphi  \frac{n}{2}I^{ n/2-1}=0
\end{equation}
where $\sigma_\varphi=\cos n\varphi\in\{+1,-1\}$.
The equation (\ref{Eq:ficpoint}) can be written in the form 
\begin{equation}\label{Eq:fd}
\delta=f_\sigma(I,\nu)
\end{equation}
where
$$
f_\sigma=-2\nu I-3I^2-\sigma \frac{n}{2}I^{ n/2-1}\,.
$$ 
A typical plot of the functions $f_\sigma$ is shown on Fig.~\ref{Fig:fsigma}(a) and~(b)
for $\nu<0$ and $\nu>0$ respectively.
\begin{figure}
\begin{center}
\includegraphics[width=4.5cm]{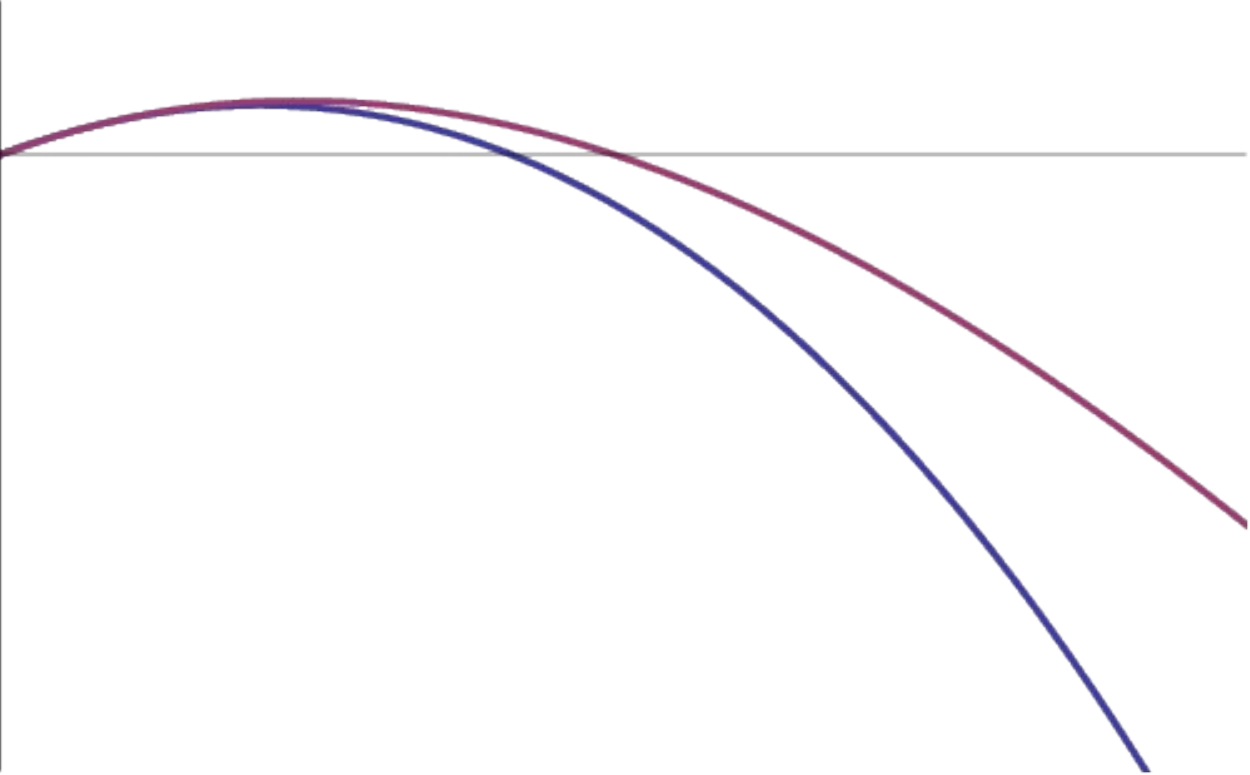}
\hskip1cm
\includegraphics[width=4.5cm]{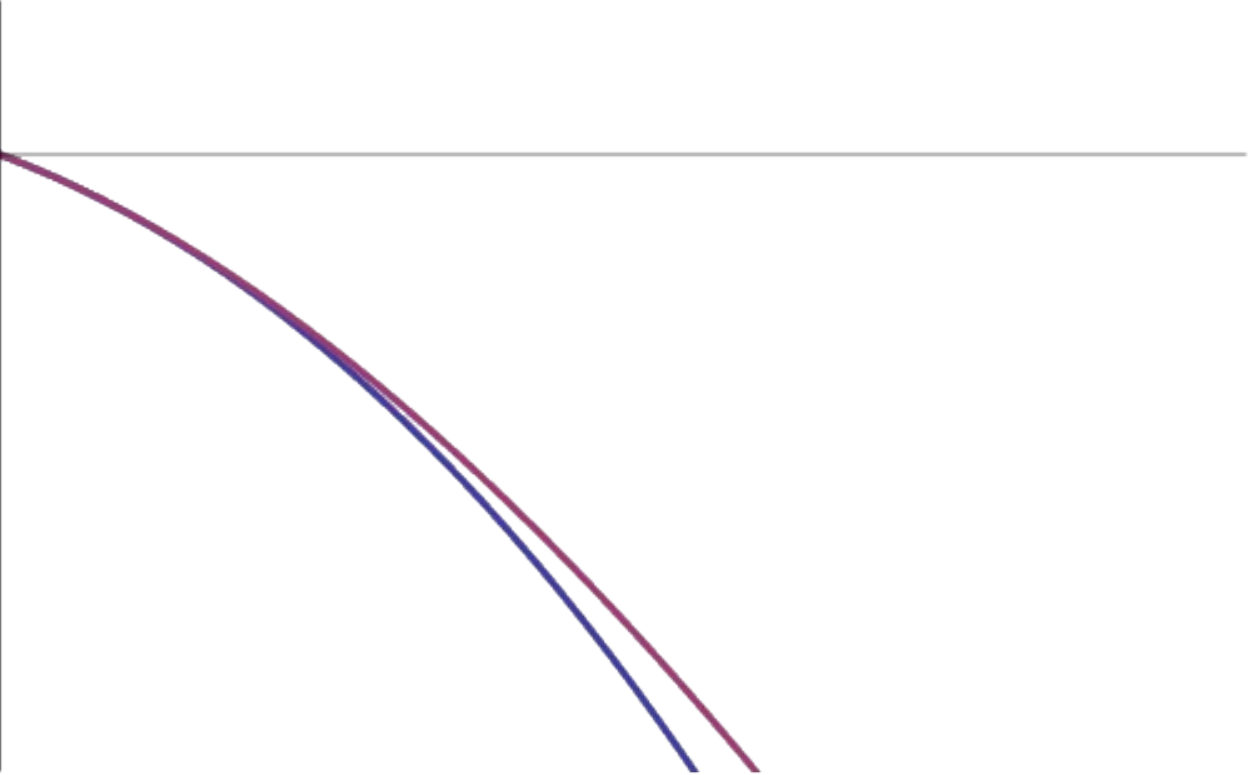}\\
(a)\hskip5cm(b)
\end{center}
\caption{Plots of the functions $f_+$ and $f_-$: (a) for $\nu<0$ 
and (b) for $\nu>0$.
\label{Fig:fsigma}}
\end{figure}
Since the function $f_\sigma$ is independent of $\delta$, the numbers and positions
of solutions to (\ref{Eq:fd}) can be easily read from the graph. Moreover, 
as the Hessian of $h$ at a critical point is diagonal, the type of
the critical point can be read from the slope of $f_\sigma$. 
A straightforward analysis 
shows that $f_\sigma$ with $\nu<0$ has a single non-degenerate
maximum in the neighbourhood of the origin, and it is monotone for $\nu>0$.
So the equation (\ref{Eq:ficpoint}) has two solutions for $\nu>0$, and from none to
 four solutions for $\nu<0$ (depending on the value of $\delta$).

The results of the above analysis are summarised in  
the bifurcation diagram for critical points of $h$  shown on
Figure~\ref{Fig:diagram}.  We note that qualitatively the bifurcation diagram
is the same for all $n\ge7$. In a neighbourhood of the origin on the $(\delta,\nu)$-plane
there are 4 domains which correspond to different numbers of saddle critical points of $h$.
\begin{figure}
\begin{center}
\includegraphics[width=5cm]{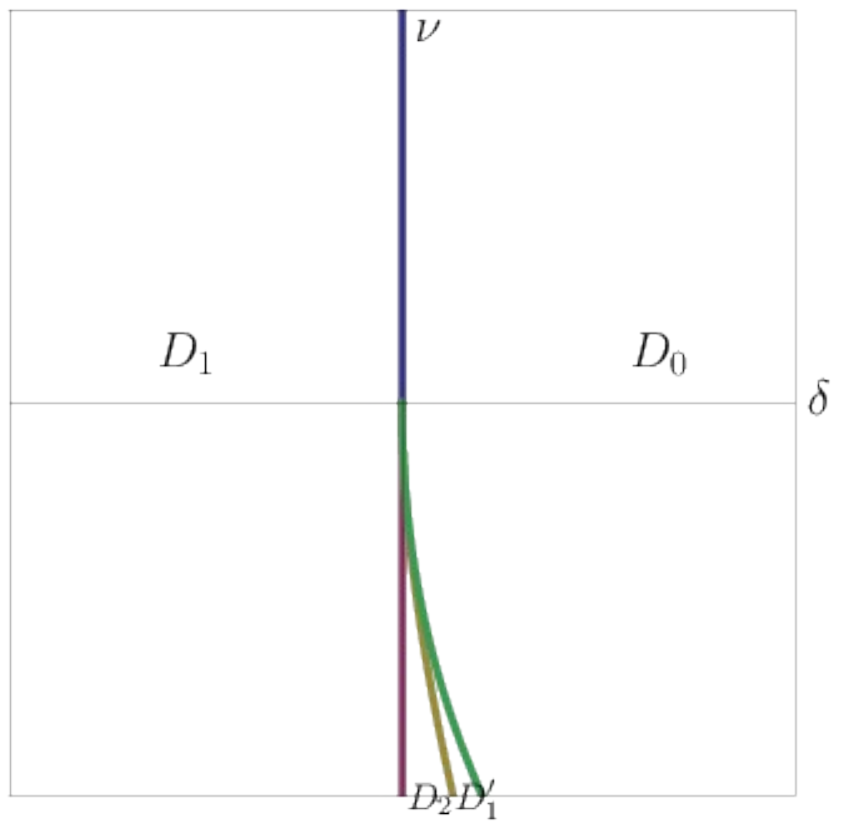}
\end{center}
\caption{Example of a bifurcation diagram on the plane $(\delta,\nu)$ ($n\ge7$).
\label{Fig:diagram}}
\end{figure}
If $(\delta,\nu)\in D_k$, $k=0,1,2$, there are exactly $kn$ saddle critical points of $h$.
The fourth domain is a narrow sector, which separates $D_0$ and $D_2$
and which we denote by $D_1'$.
In this sector the Hamiltonian has $n$ critical saddle points. 

Let us describe the level sets of $h$.
 When the parameters are in $D_0$
all level sets of $h$  are closed curves which look like the invariant curves
shown on  Figure~\ref{Fig:islands7}(a).  When the parameters are in $D_1$,
the critical level set of $h$ form a chain of $n$ islands similar to one shown on
Figure~\ref{Fig:islands7}(b). When the parameters cross the positive 
$\nu$-semiaxes moving from $D_0$ to $D_1$, a chain of islands bifurcates 
from the origin.
When the parameters cross the negative  $\nu$-semiaxes moving from $D_1$ to $D_2$
a second chain of $n$-islands bifurcates from the origin (a typical picture is
shown on Figure~\ref{Fig:islands7e2}(a)). 
In $D_2$ there is a line on which both chains of islands belong to a single level set of $h$
(see Figure~\ref{Fig:islands7e2}(c)). Near this line the level sets of $h$ change their topology
without any change in the number of critical points (see Figure~\ref{Fig:islands7e2}(d)).
We note that in this region the Hamiltonian $h$ has meandering invariant curves similar to ones
studied in \cite{Simo1998,DL2000}. Finally, the chains of islands disappear through
Hamiltonian saddle-node bifurcations when the 
parameters cross the boundary of $D_1'$: first the outer one and then the other one
(see Figure~\ref{Fig:islands7e2}(e) and \ref{Fig:islands7e2}(f)).

\begin{figure}[t]
\begin{center}
\includegraphics[width=4cm]{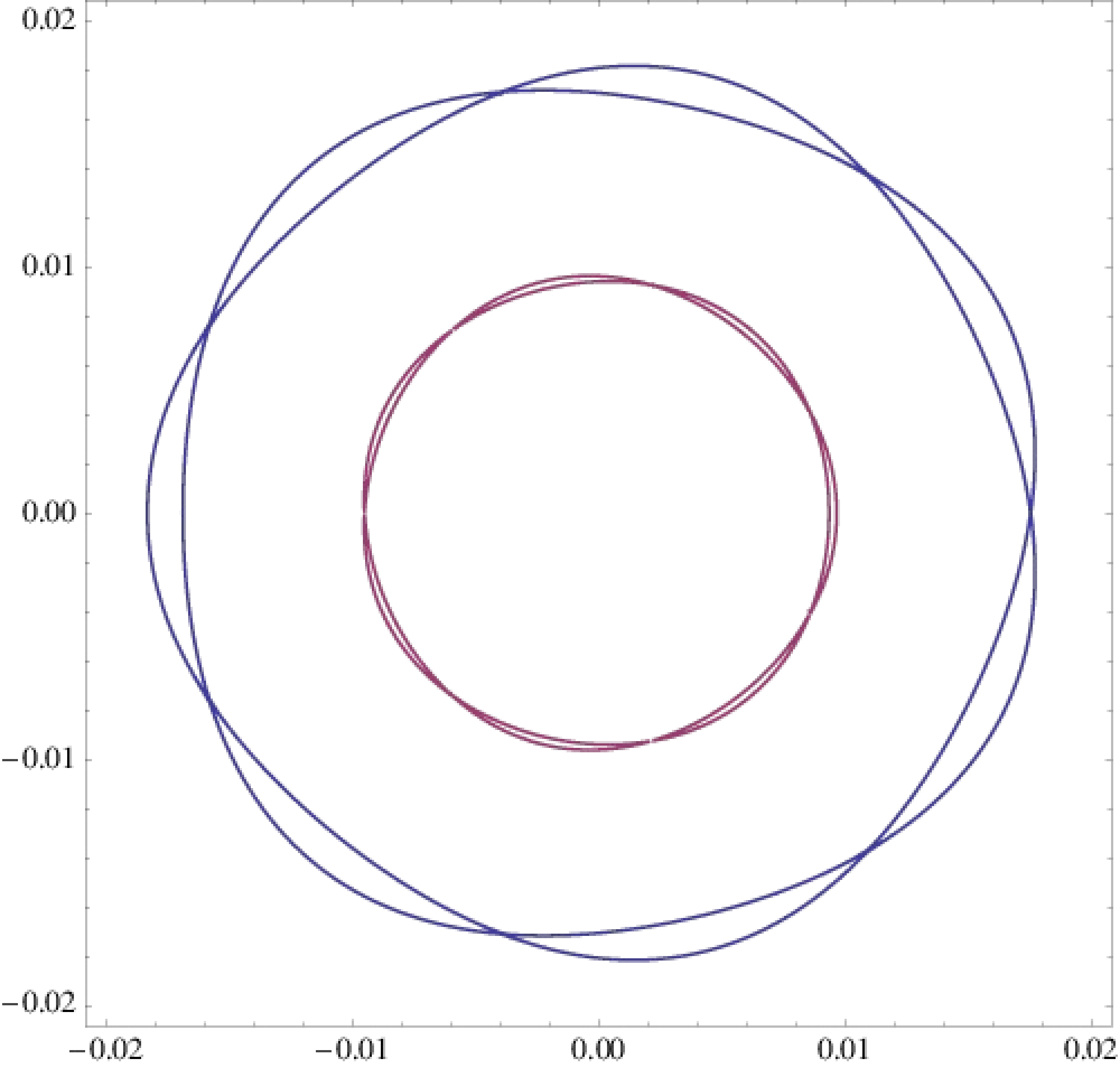}%
\includegraphics[width=4cm]{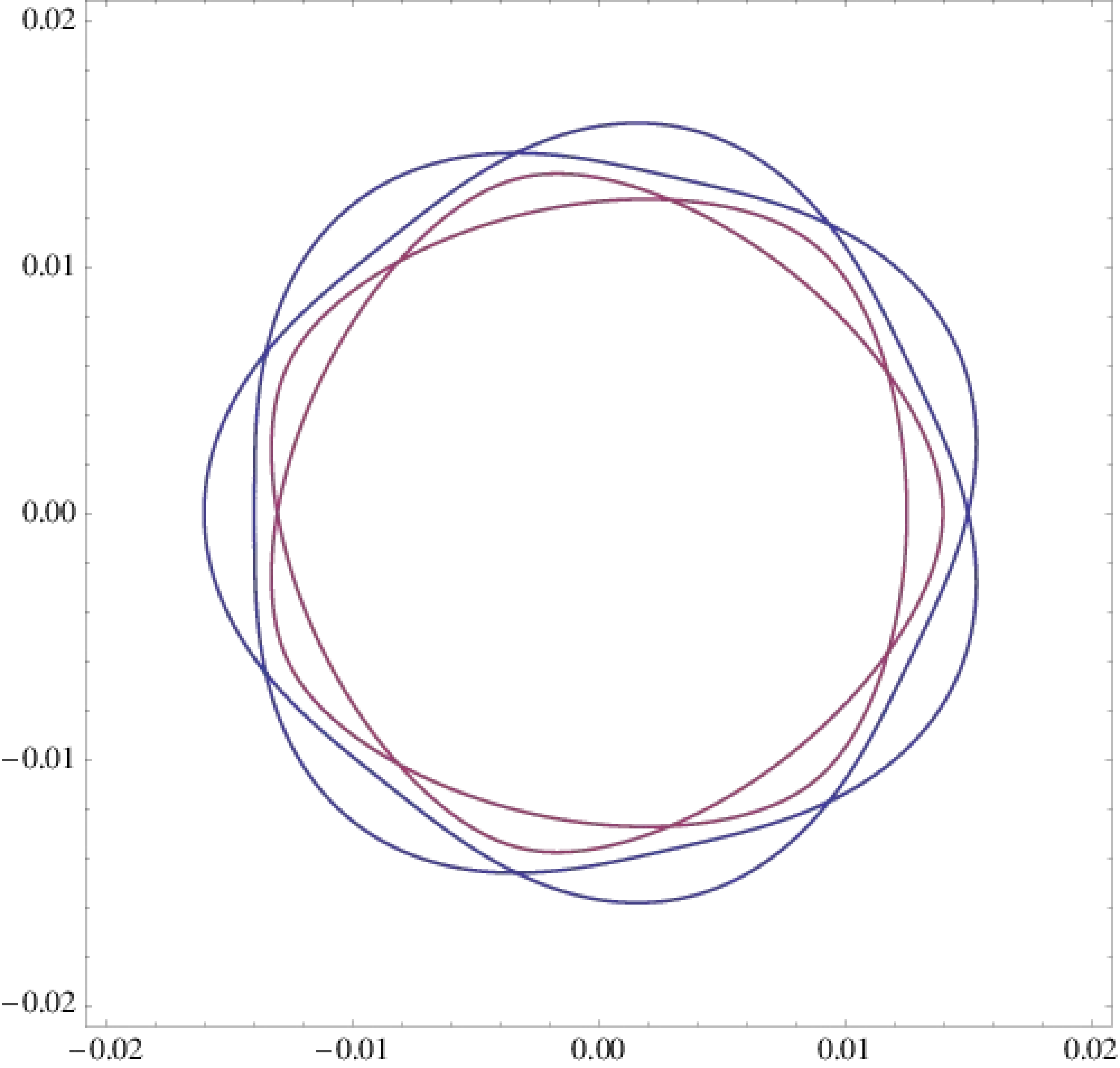}%
\includegraphics[width=4cm]{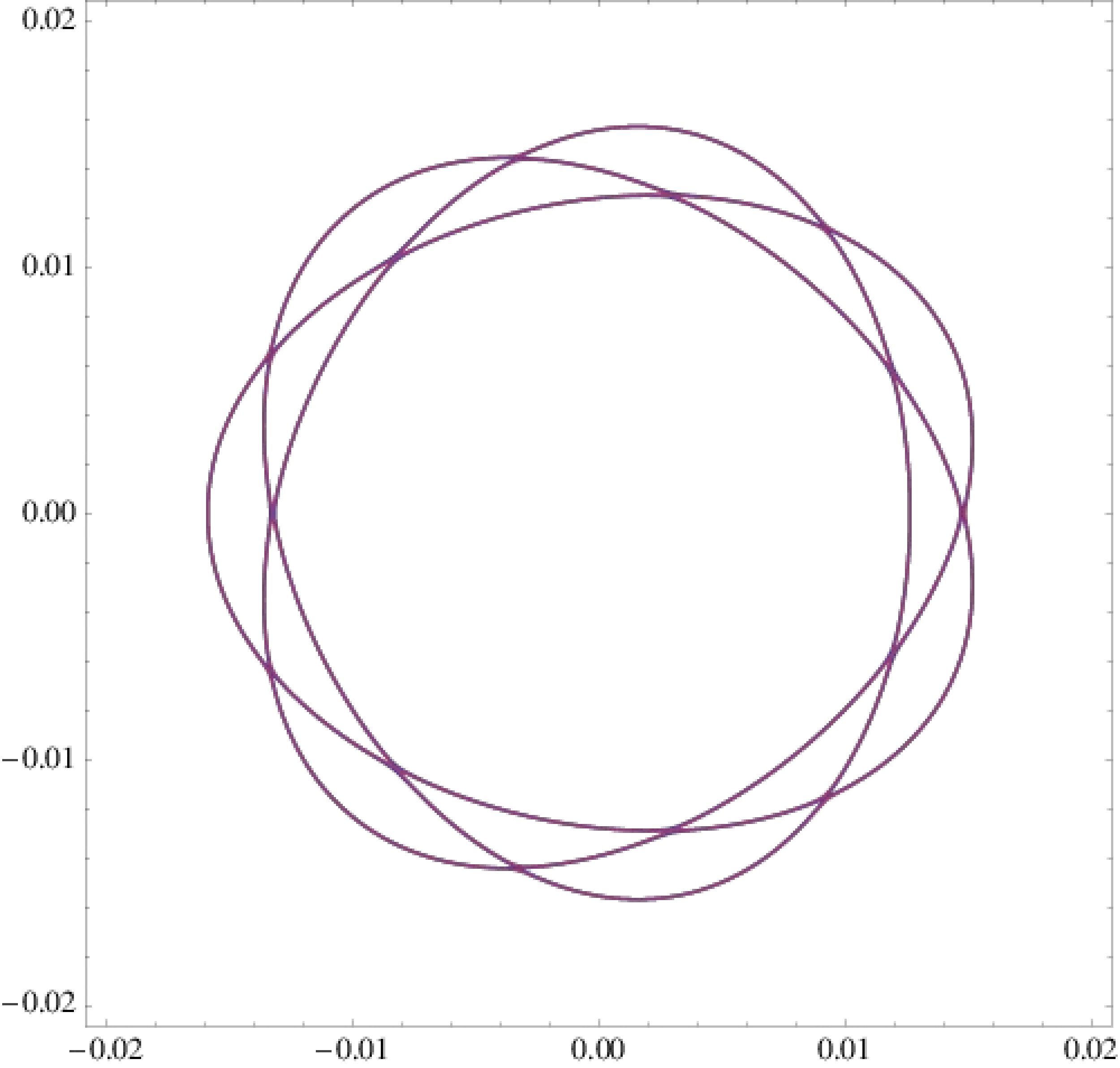}\\
(a)\hskip3.8cm(b)\hskip3.8cm(c)\\[6pt]
\includegraphics[width=4cm]{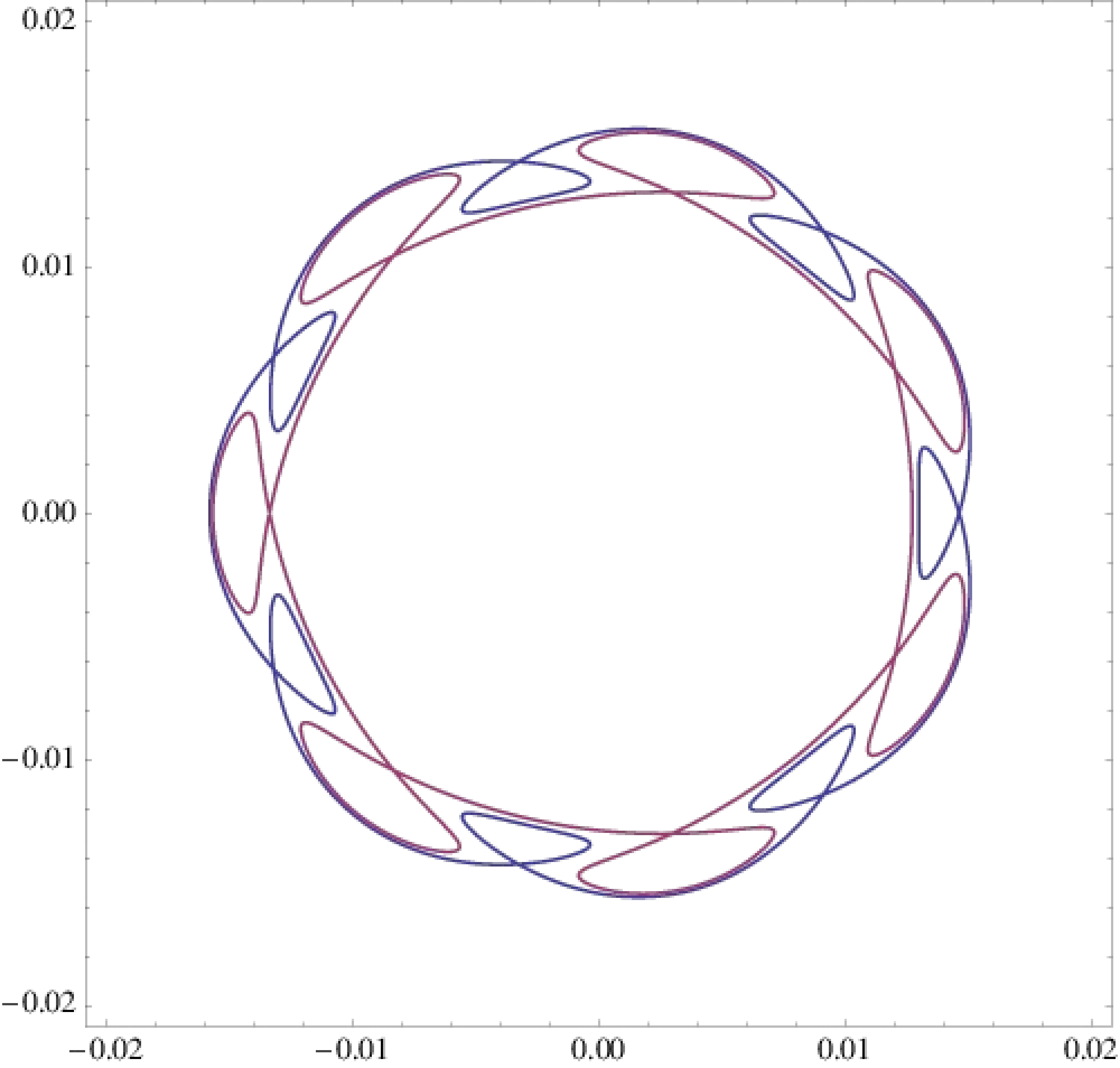}%
\includegraphics[width=4cm]{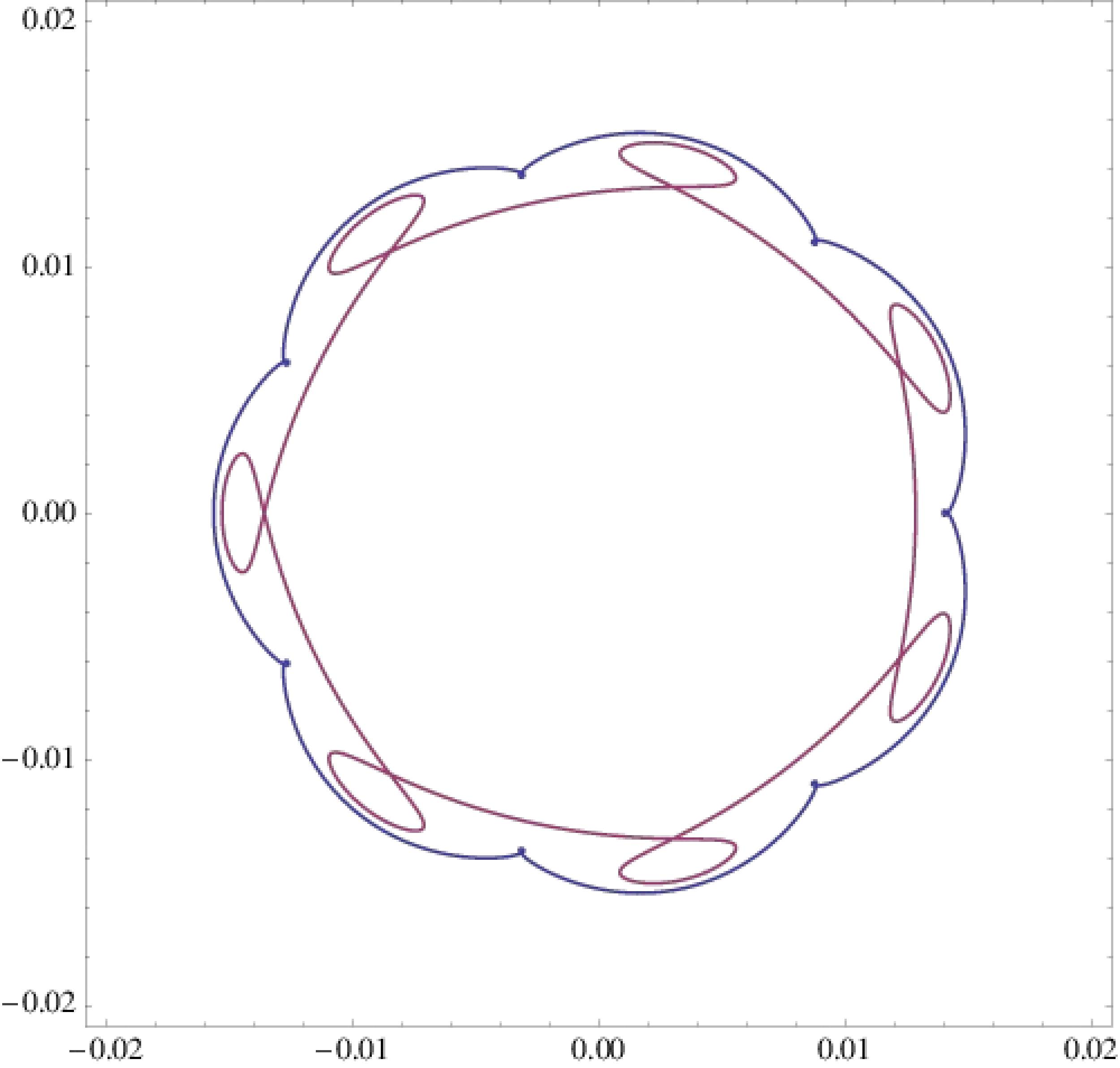}%
\includegraphics[width=4cm]{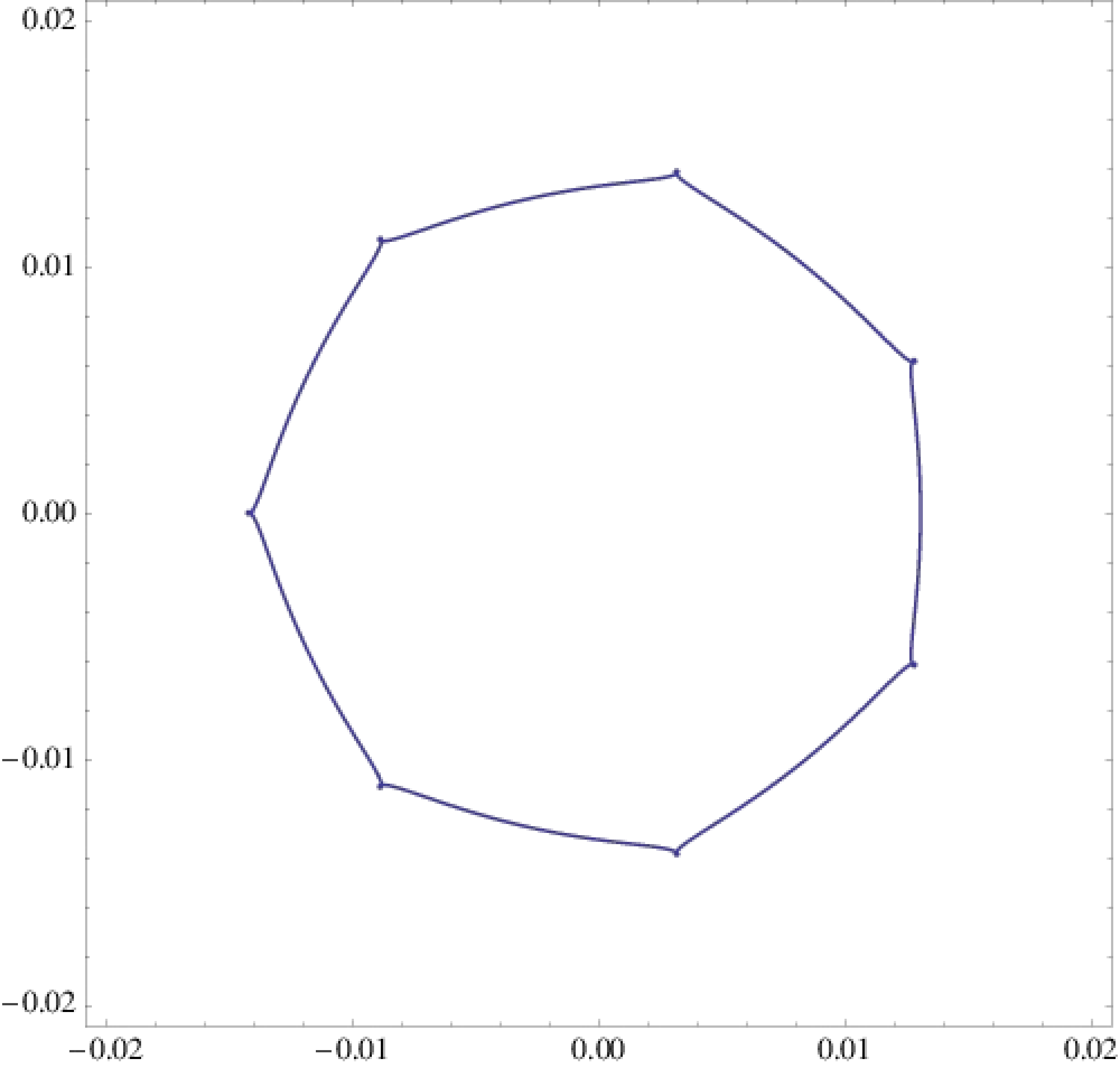}\\
(d)\hskip3.8cm(e)\hskip3.8cm(f)
\end{center}
\caption{Level lines of the normal form Hamiltonian which correspond to
the energy levels of unstable equilibria ($n=7$). 
The pictures correspond to the parameters $(\delta,\nu)$ moving along
a circle around the origin in the anti-clockwise direction and crossing the domains $D_2$
and $D_1'$. 
\label{Fig:islands7e2}}
\end{figure}

In order to derive an approximate expression for the curves bounding the
domain $D_1'$ on the bifurcation diagram we note that the corresponding values of $\delta$ coincide 
with the maximum value of the functions $f_\sigma$ (compare with equation~(\ref{Eq:fd})).
The equation $\frac{\partial f_\sigma}{\partial I}=0$
 takes the form
\begin{equation}\label{Eq:ficpoint1}
2\nu +6I+\sigma  \frac{n}{2}\left(\frac{n}{2}-1\right)I^{ n/2-2}=0\,.
\end{equation}
The equations (\ref{Eq:ficpoint}) and (\ref{Eq:ficpoint1}) 
together define a line on the plane of $(\delta,\nu)$ along which $h$ has a doubled critical point.

In order to solve the equation  (\ref{Eq:ficpoint1}), we rewrite it in the form
$$
I=-\frac{\nu}{3} -\sigma  \frac{n}{12}\left(\frac{n}{2}-1\right)I^{ n/2-2}
$$
and  apply the method of consecutive approximations starting with the initial approximation
$I_0=-\frac{\nu}{3}$.
A standard estimate from the contraction mapping theorem
implies
$$
I=-\frac{\nu}{3}-\sigma  \frac{n}{12}\left(\frac{n}{2}-1\right)\left(-\frac{\nu}{3}\right)^{ n/2-2}+O(\nu^{ n-5}).
$$
Substituting this approximation into (\ref{Eq:fd}) we get
$$
\delta=
\frac{\nu^2}{3}-\sigma\frac{n}2\left(-\frac\nu3\right)^{n/2-1}+O(\nu^{n-4})\,.
$$
This equation defines two lines on the plane $(\delta,\nu)$ (one for each value of $\sigma$).
Both lines enter the zero quadratically and differ by  $O(\nu^{n/2-1})$.
In this way we have derived an approximate expressions for the lines
which bound the domain $D_1'$ on the
bifurcation diagram of  Fig.~\ref{Fig:diagram}.
We note that $\sigma=1$ leads to a smaller $\delta$ for the same $\nu$
compared to $\sigma=-1$. Consequently, $\sigma=1$ corresponds to the boundary
between $D_2$ and $D_1'$, and $\sigma=-1$ corresponds to the
boundary between $D_1'$ and $D_0$.

\subsubsection*{Shape of the islands}
Let 
\begin{equation}\label{Eq:scaling7_0}
\nu=\mu-3\epsilon,\qquad
\delta=3\epsilon^2-2\mu\epsilon,
\qquad
I=\epsilon+ \epsilon^{n/4-1/2}J,\qquad
h=\epsilon^{-n/2}\bar h\,.
\end{equation}
In the new variables the Hamiltonian  (\ref{Eq;hn})  takes the form (after skipping a constant term)
$$
\bar h=J^2+\epsilon^{n/4}\mu^{-3/2}J^3+
(1+\epsilon^{n/4-1}\mu^{-1/2}J)^{n/2}\cos n\varphi
.
$$
Assuming $0<\epsilon< \mu$ we get
$$
\bar h=J^2+\cos n\varphi+O(\epsilon^{(n-6)/4})
.
$$
Solving equations (\ref{Eq:scaling7_0}) with respect to $\epsilon$ we
find that    $\epsilon=\frac{1}{3} \left(\sqrt{\nu ^2-3 \delta
   }-\nu \right)$. This expression gives a small value  when $\delta$ and $\nu$ are small.
 In terms of the original parameters the region $0<\epsilon< \mu$
has the form $\delta<\frac{\nu^2}4$ for $\nu\le0$ and $\delta<0$ for $\nu>0$.
In this region  the chain of islands is approximated by
a chain of the pendulum's separatrices. Reversing the scaling (\ref{Eq:scaling7_0})
we  obtain the radius and the width for the chain of islands.

  Note that equation (\ref{Eq:scaling7_0}) gives a second expression
  for $\epsilon$, namely
   $\epsilon=\frac{1}{3} \left(-\sqrt{\nu ^2-3 \delta
   }-\nu \right)$, which  corresponds to the second chain of islands
    for $\nu<0$ and $0<\delta<\frac{\nu^2}3$.

\subsubsection*{A model for the changes in critical level sets of $h$ in $D_2$.}
In order to study the changes in the level sets of $h$ near the boundary between
$D_2$ and $D_1'$  
we rescale the variables:
\begin{equation}\label{Eq:scaling7}
\nu=-3\epsilon,\quad
\delta=3\epsilon^2+\epsilon^{n/3}a,
\quad
I=\epsilon+\epsilon^{n/6} J,
\quad
h=\epsilon^{n/2}\bar h,
\quad
\psi=n\varphi\,.
\end{equation}
In the new variables the Hamiltonian takes the form (after skipping a constant term)
\begin{equation}
\bar h= a J+J^3+(1+\epsilon^{n/6-1} J)^{n/2}\cos \psi\,.
\end{equation}
Introducing $\epsilon_1=\frac n2\epsilon^{n/6-1}$ we can rewrite the
Hamiltonian in the form
$$
\bar h=a J+J^3+\cos\psi+\epsilon_1 J\cos\psi+O(\epsilon_1^2)\,.
$$
Naturally, we compare the level sets of this Hamiltonian with
\begin{equation}\label{Eq:h61}
\bar h_0=a J+J^3+\cos\psi+\epsilon_1 J\cos\psi
\end{equation}
which does not have any explicit dependence on $n$.
In this equation $\epsilon_1$ is a small parameter while the parameter $a$
is not necessarily small. Critical level sets of $\bar h_0$
for various values of parameter $a$ are shown on Figure~\ref{Fig:barhn}.
For aesthetic reasons, the diagrams shows two complete periods
of the Hamiltonian $\bar h_0$. The original Hamiltonian (\ref{Eq;hn})
posses a chain of $n$ islands.

\begin{figure}
\begin{center}
\includegraphics[width=3.3cm]{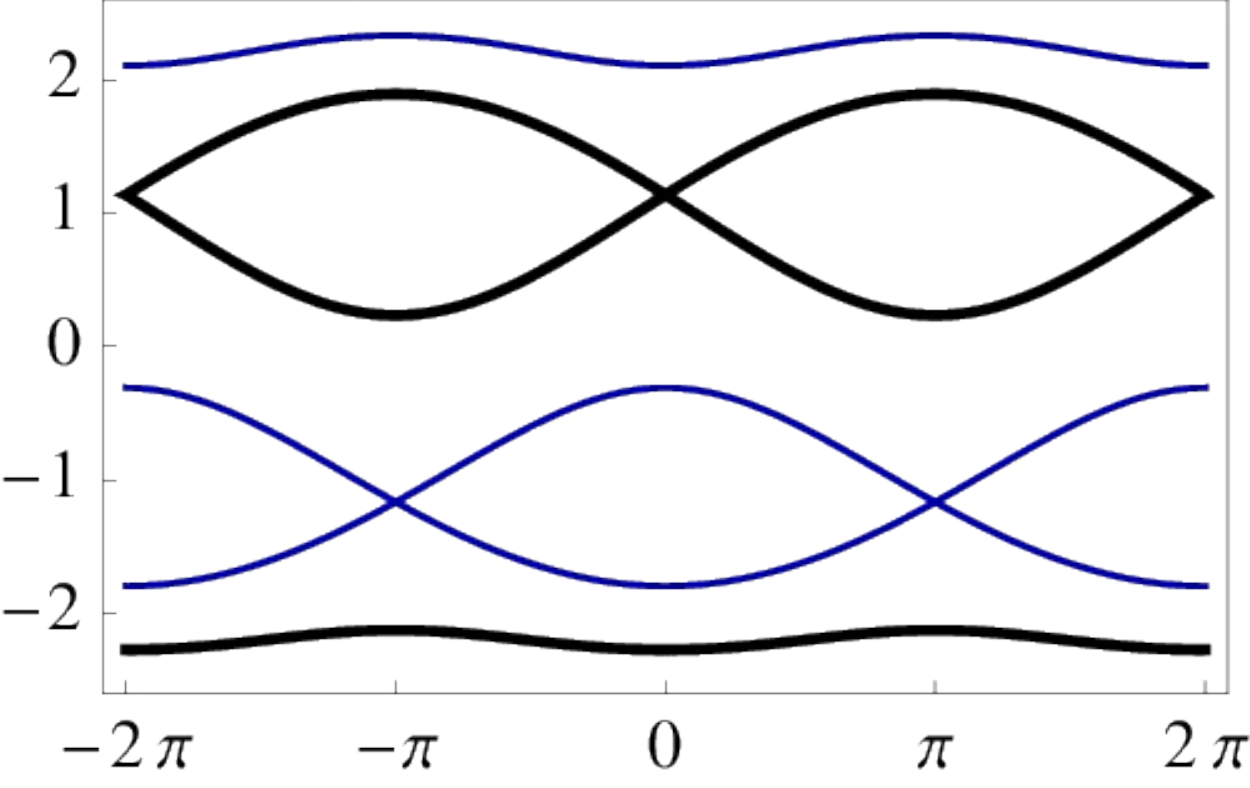}
\hskip0.1cm
\includegraphics[width=3.3cm]{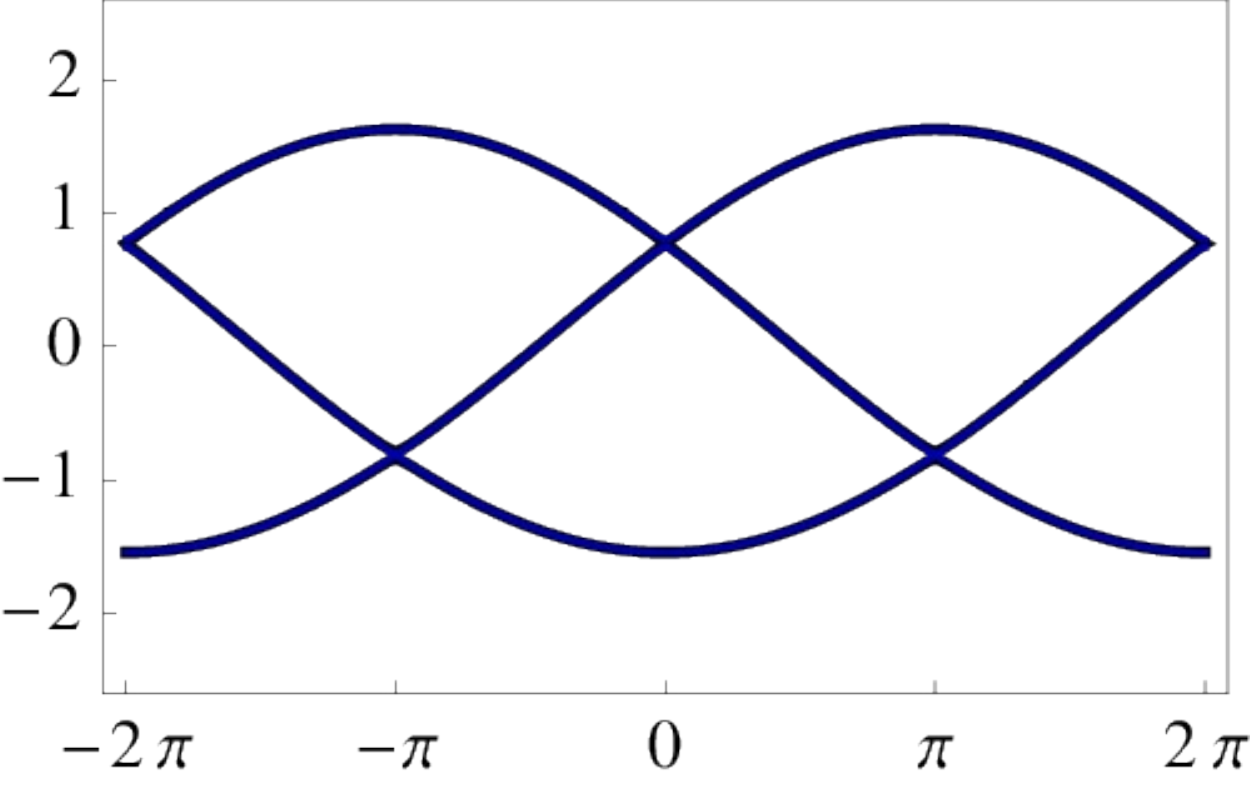}
\hskip0.1cm
\includegraphics[width=3.3cm]{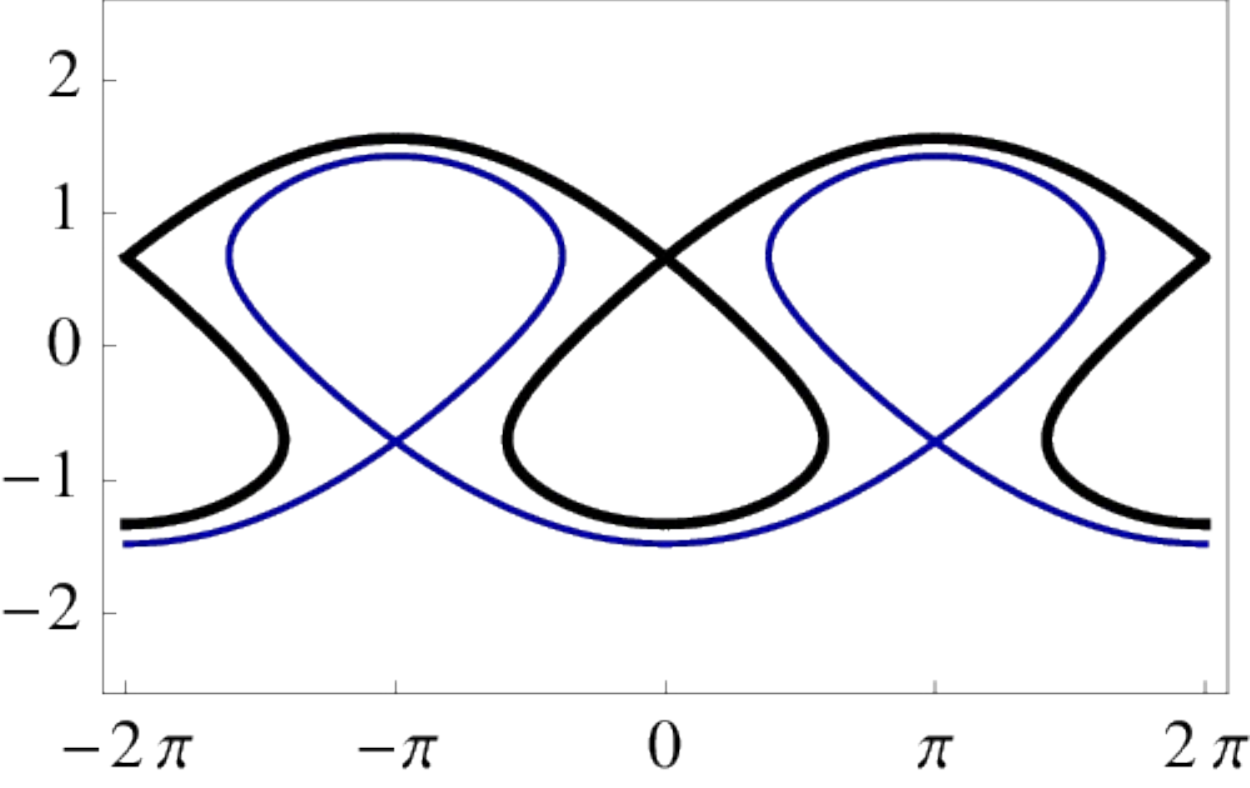}
\\
(a)\hskip3cm(b)\hskip3cm(c)
\\[12pt]
\includegraphics[width=3.3cm]{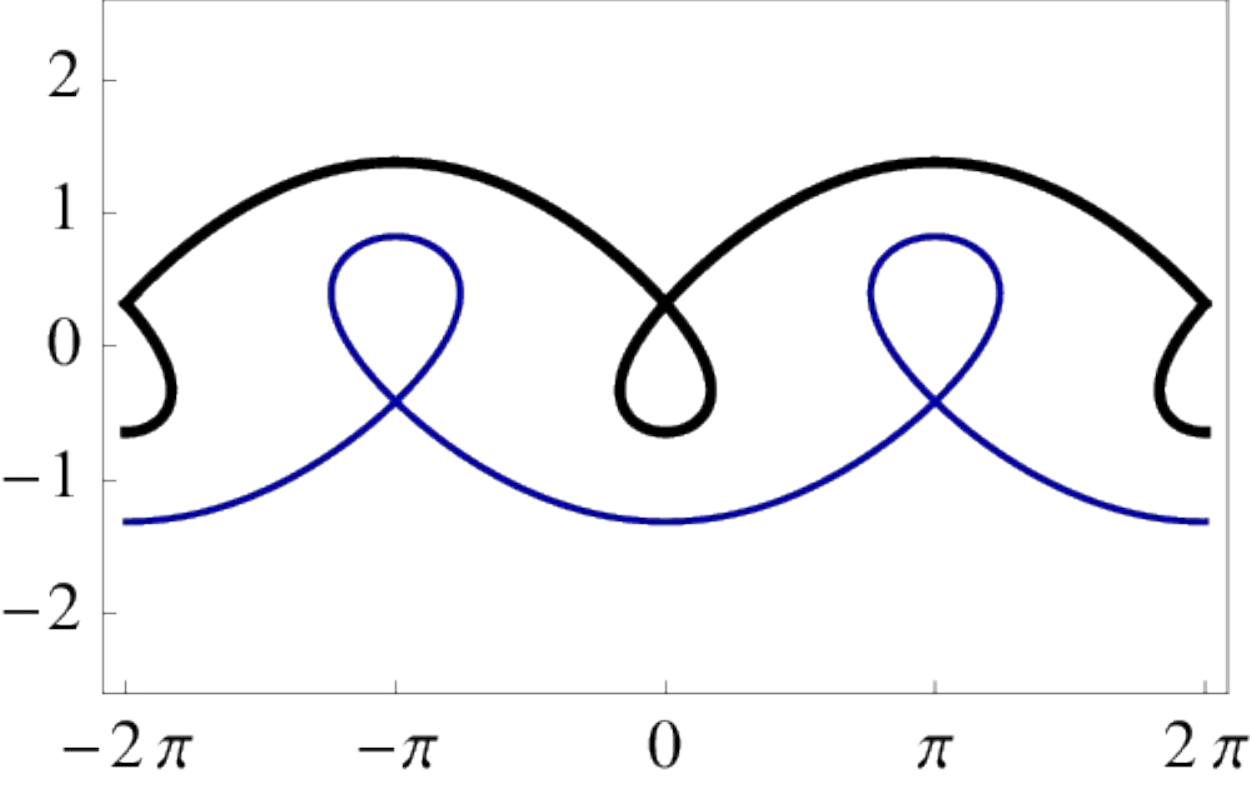}
\hskip0.1cm
\includegraphics[width=3.3cm]{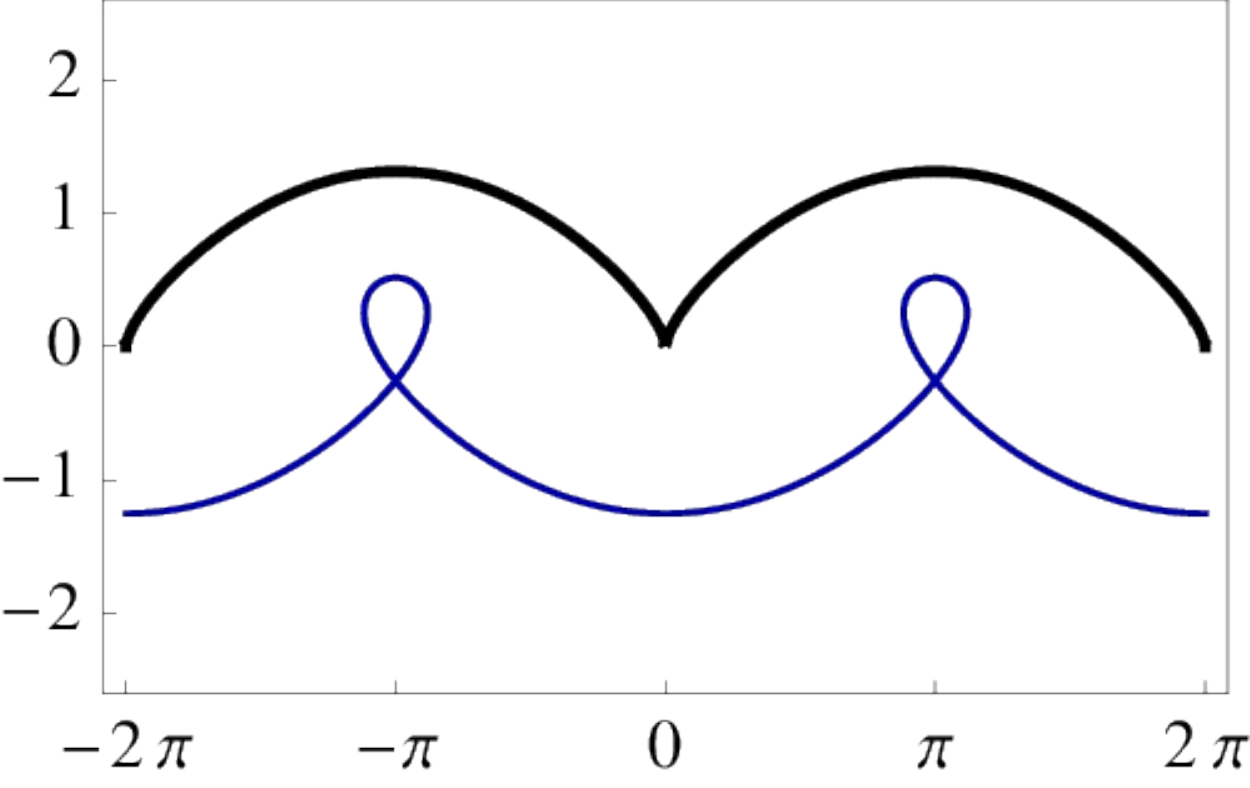}
\hskip0.1cm
\includegraphics[width=3.3cm]{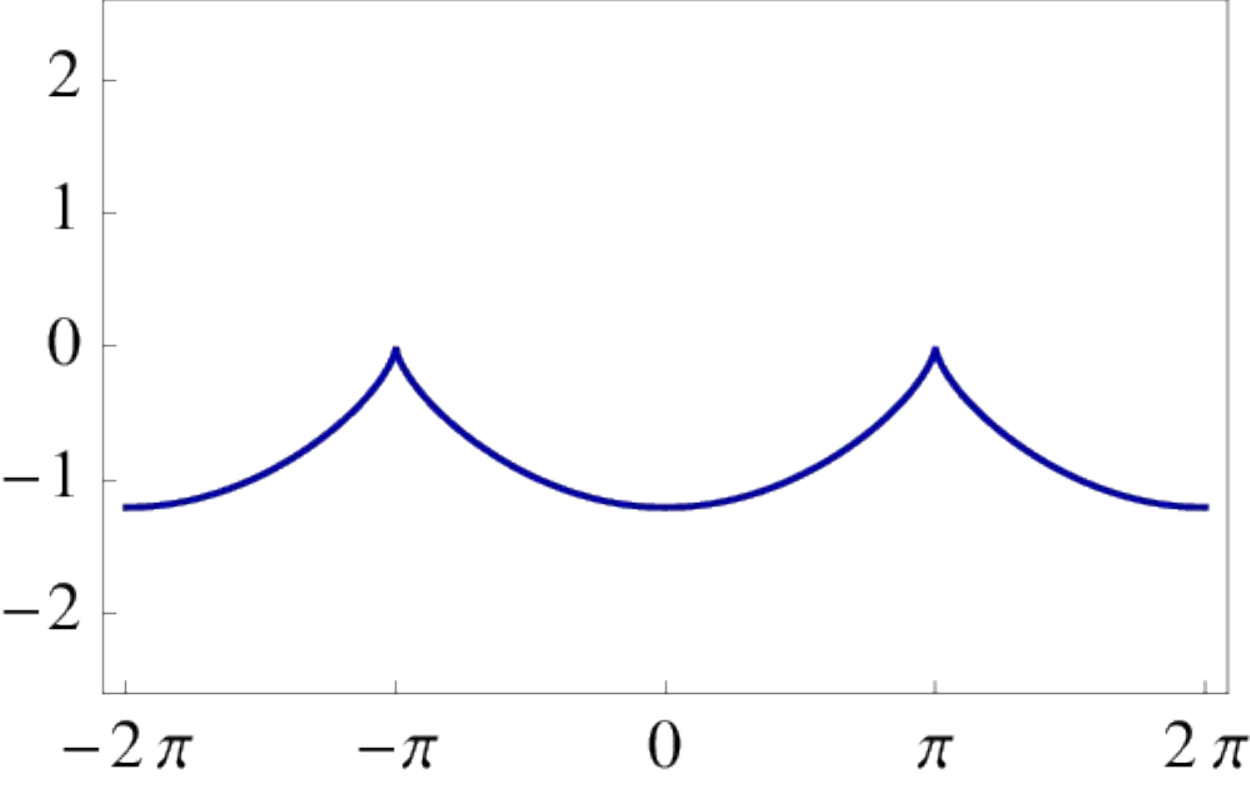}
\\
(d)\hskip3cm(e)\hskip3cm(f)
\end{center}
\caption{Critical level sets of the Hamiltonian (\ref{Eq:h61}). The parameter $a$ increases from left to right
and from top to bottom:
(a) $a=-5$; (b) $a=a_c$;
(e) $a=-\epsilon_1$; (f) $a=\epsilon_1$. For all pictures $\epsilon_1=\frac1{10}$.
\label{Fig:barhn}}
\end{figure}

Obviously, the critical points of $\bar h_0$ 
are located at  $\cos\psi=\sigma_\psi=\pm1$ and
$$
J=\pm\sqrt{-(a+\epsilon_1\sigma_\psi)/3}\,.
$$
This explicit expression for $J$ shows that
$\bar h_0$ has exactly two saddle critical points per period
for $a<-\epsilon_1$, one critical point for $-\epsilon_1<a<\epsilon_1$ and
no critical points for $a>\epsilon_1$. The critical level sets for $a=\pm\epsilon_1$
are shown on Figures~\ref{Fig:barhn}(e) and~(f) respectively,
which correspond to a Hamiltonian saddle-node bifurcation.

We note that at $a=-a_c$, $a_c=3\cdot 2^{-2/3}+O(\epsilon_1^2)$, the two critical saddle points belong
to a single critical level set as shown on Figure~\ref{Fig:barhn}(b).

\subsection{$n=6$}

In the case $n=6$ the model Hamiltonian takes the form
\begin{equation}\label{Eq:model6}
h=\delta I+\nu I^2+I^3+b_0I^{3}\cos n\varphi\,.
\end{equation}
For $\delta=\nu=0$, the origin is stable if $|b_0|<1$ and unstable if $|b_0|>1$.
We assume that $b_0\not\in\{\,{-}1,0,1\,\}$ as these three cases correspond to
a degeneracy of a higher co-dimension.

The critical points of $h$ have either $\varphi=0$ or $\frac{\pi}n\pmod{\frac{2\pi}{n}}$,
and
\begin{equation}\label{Eq:ficpoint6}
\delta +2\nu I+3I^2(1+\sigma_\varphi b_0)=0
\end{equation}
where $\sigma_\varphi=\cos\varphi=\pm1$.
The equation (\ref{Eq:ficpoint6}) can be easily solved explicitly but 
it is more convenient to write it in the form 
\begin{equation}\label{Eq:fd6}
\delta=f_\sigma(I,\nu)
\end{equation}
where
$$
f_\sigma=-2\nu I-3(1+\sigma b_0)I^2\,.
$$
Since the function $f_\sigma$ is independent of $\delta$, the number and positions
of solutions to the equation (\ref{Eq:fd6}) for  given $\delta$ and $\nu$
can be easily read from the graphs of the functions $f_+$ and~$f_-$. 
The analysis leads to different conclusions for the stable and unstable cases.
Moreover, the qualitative behaviour of the graphs depend on the sign of  $\nu$.
In the rest, this analysis is completely straightforward  and its
results are summarised on the  bifurcation diagrams
of Figure~\ref{Fig:n6diagram}(a) and~(b) for the stable and unstable case
respectively.

In the stable case the bifurcation diagram is similar to the case of a  resonance of order
$n\ge7$. Although later in this section we will observe some quantitative differences.

In the unstable case the bifurcation diagram consists of four domains:
$D_1$, $D_1'$, $D_2$ and $D_2'$, which correspond to 
the existence of $n$ and $2n$ saddle critical points of $h$ as indicated by
the subscript in the domain's name.

\begin{figure}
\begin{center}
\includegraphics[width=4.5cm]{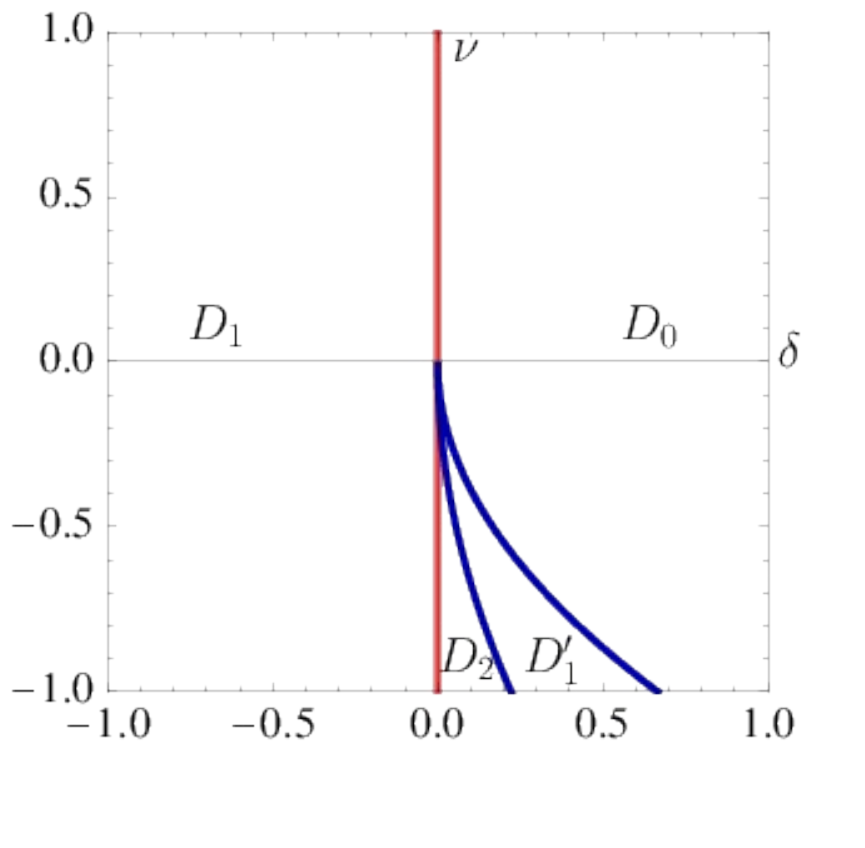}\hskip1cm
\includegraphics[width=4.5cm]{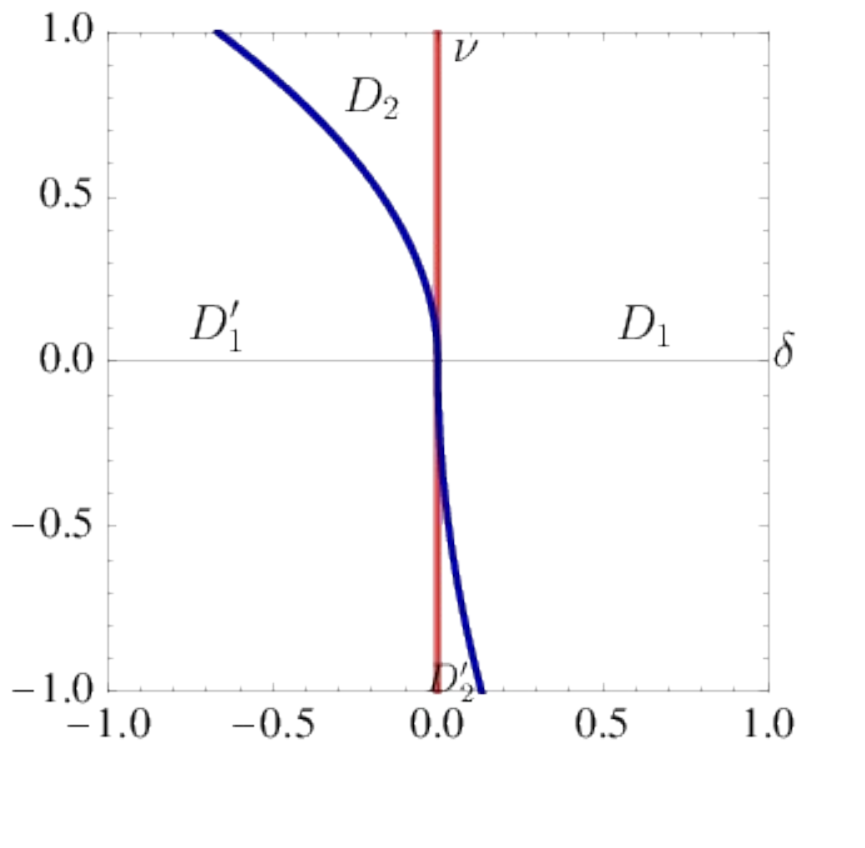}\\
(a)\hskip4.5cm(b)
\end{center}
\caption{Bifurcation diagrams for $n=6$: the stable case (a) and the unstable one (b)}
\label{Fig:n6diagram}
\end{figure}

In order to derive an equation for the boundaries  between the domains
on the bifurcation diagram, we note that the corresponding value of $\delta$ coincide
with a critical value of  $f_\sigma$. The equation $\frac{\partial f_\sigma}{\partial I}=0$
has a unique solution
$$
I=-\frac{\nu}{3(1+\sigma  b_0)}\,.
$$
The corresponding critical value is
$$
\delta=\frac{\nu^2}{3(1+\sigma b_0)}\,.
$$
This equation defines two lines  corresponding to double critical points of~$h$
(for $\sigma=\pm1$ respectively). Note that 
the bifurcation diagrams of Figure~\ref{Fig:n6diagram} include only the parts of the parabolas
which correspond to positive critical points.

\begin{figure}
\begin{center}
\includegraphics[width=3.3cm]{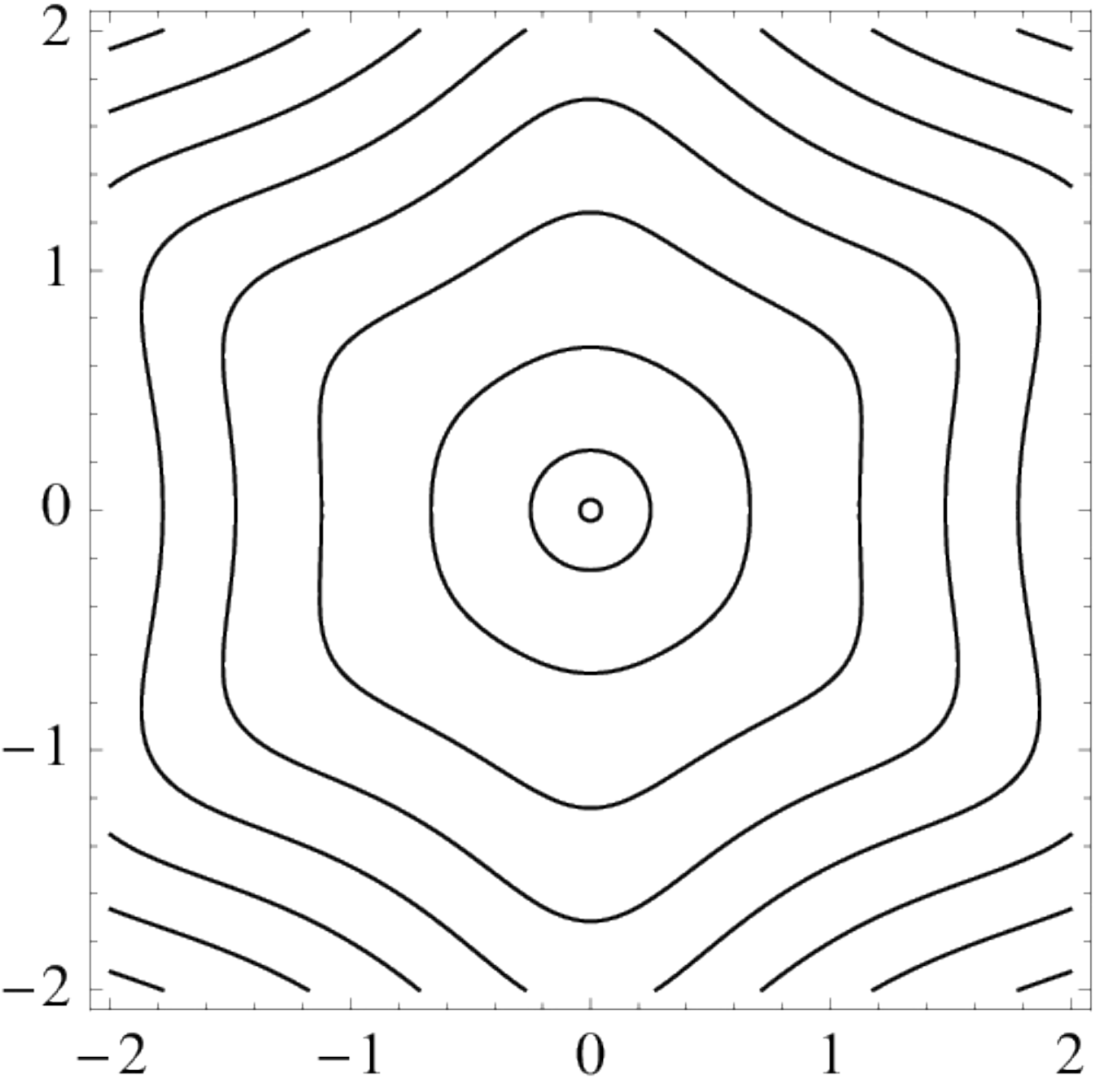}
\hskip1cm
\includegraphics[width=3.3cm]{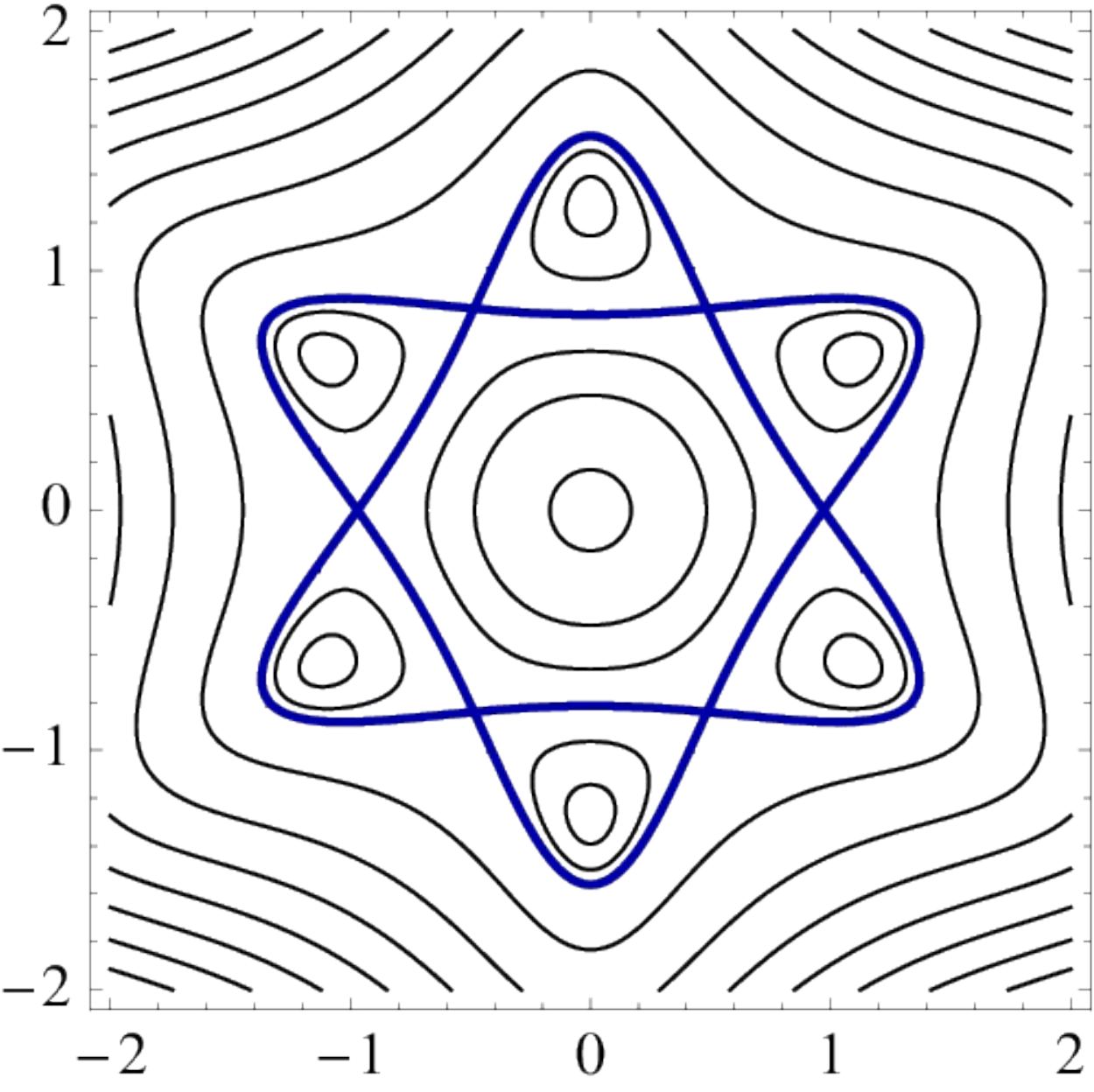}
\hskip1cm
\includegraphics[width=3.3cm]{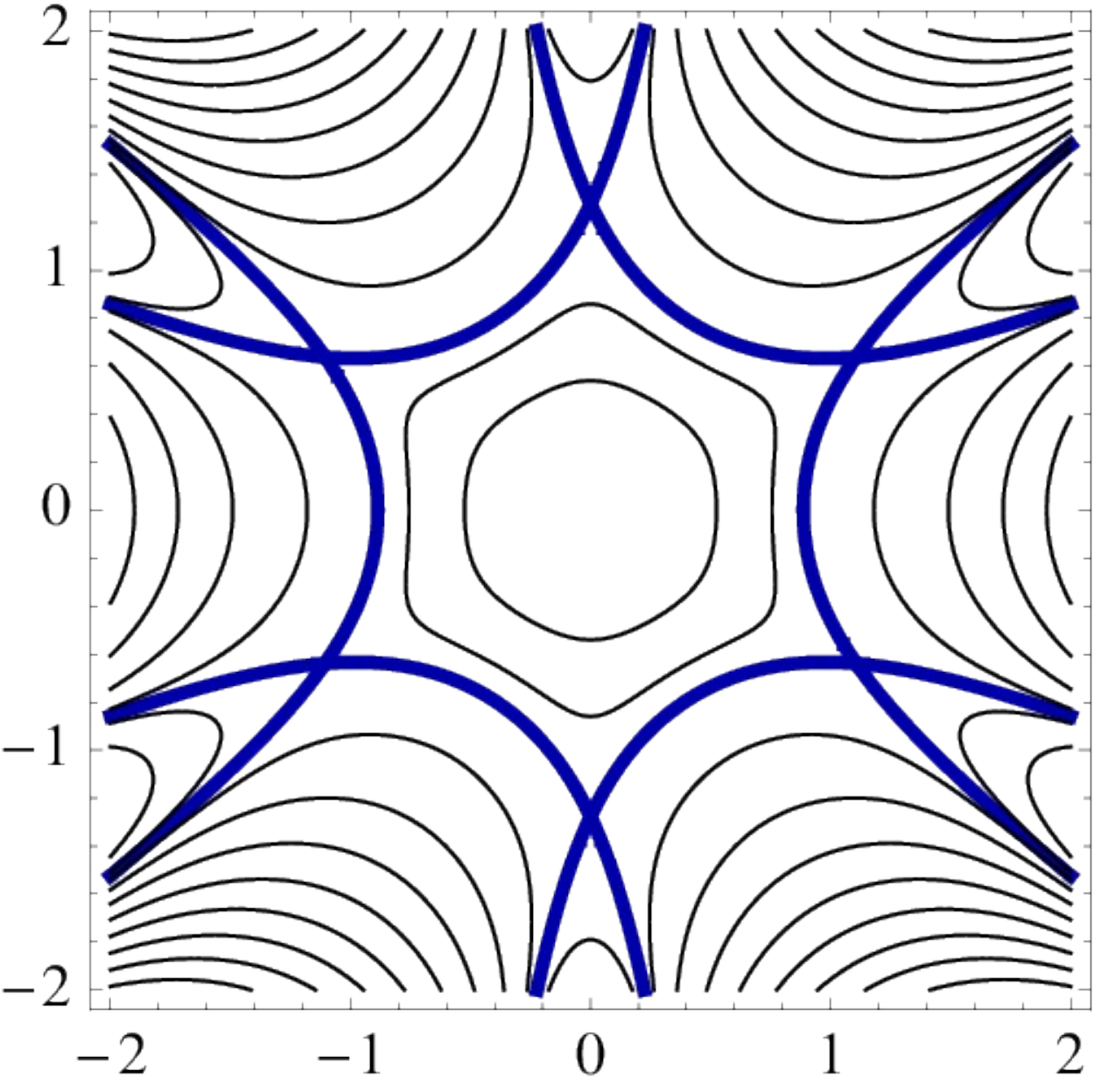}
\\
(a)\hskip4cm(b)\hskip4cm(c)
\end{center}
\caption{Level sets for the limit Hamiltonian (\ref{Eq:barh60}):
(a) $\mathrm{sign}(\delta)=1$ and
$b_0=\frac12$,
(b)
$\mathrm{sign}(\delta)=-1$ and  
$b_0=\frac12$,
(c) 
$\mathrm{sign}(\delta)=1$
and $b_0=\frac32$.
\label{Fig:n6gen} }
\end{figure}

\begin{figure}[t]
\begin{center}
\includegraphics[width=3.2cm]{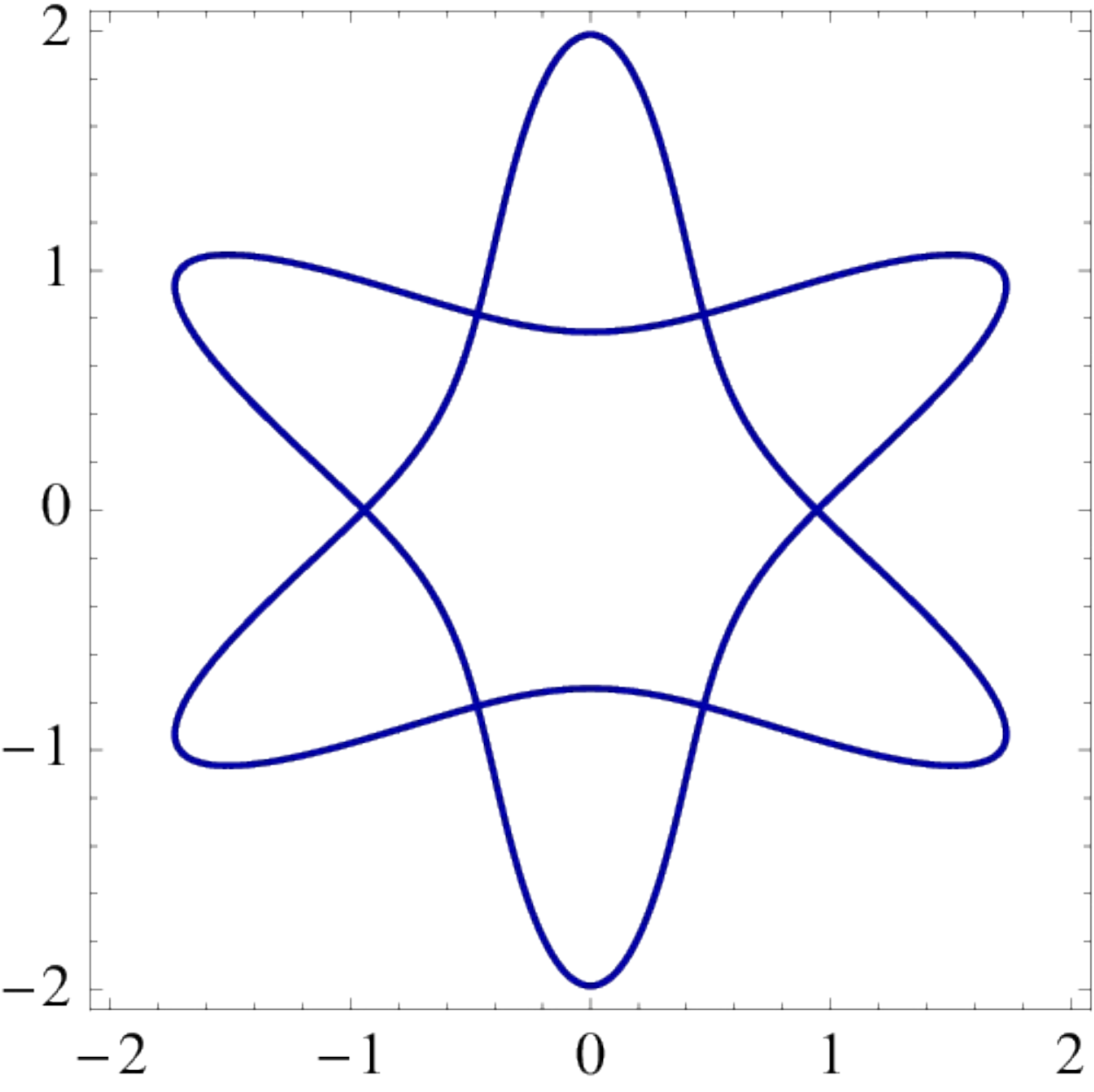}
\hskip0.1cm
\includegraphics[width=3.2cm]{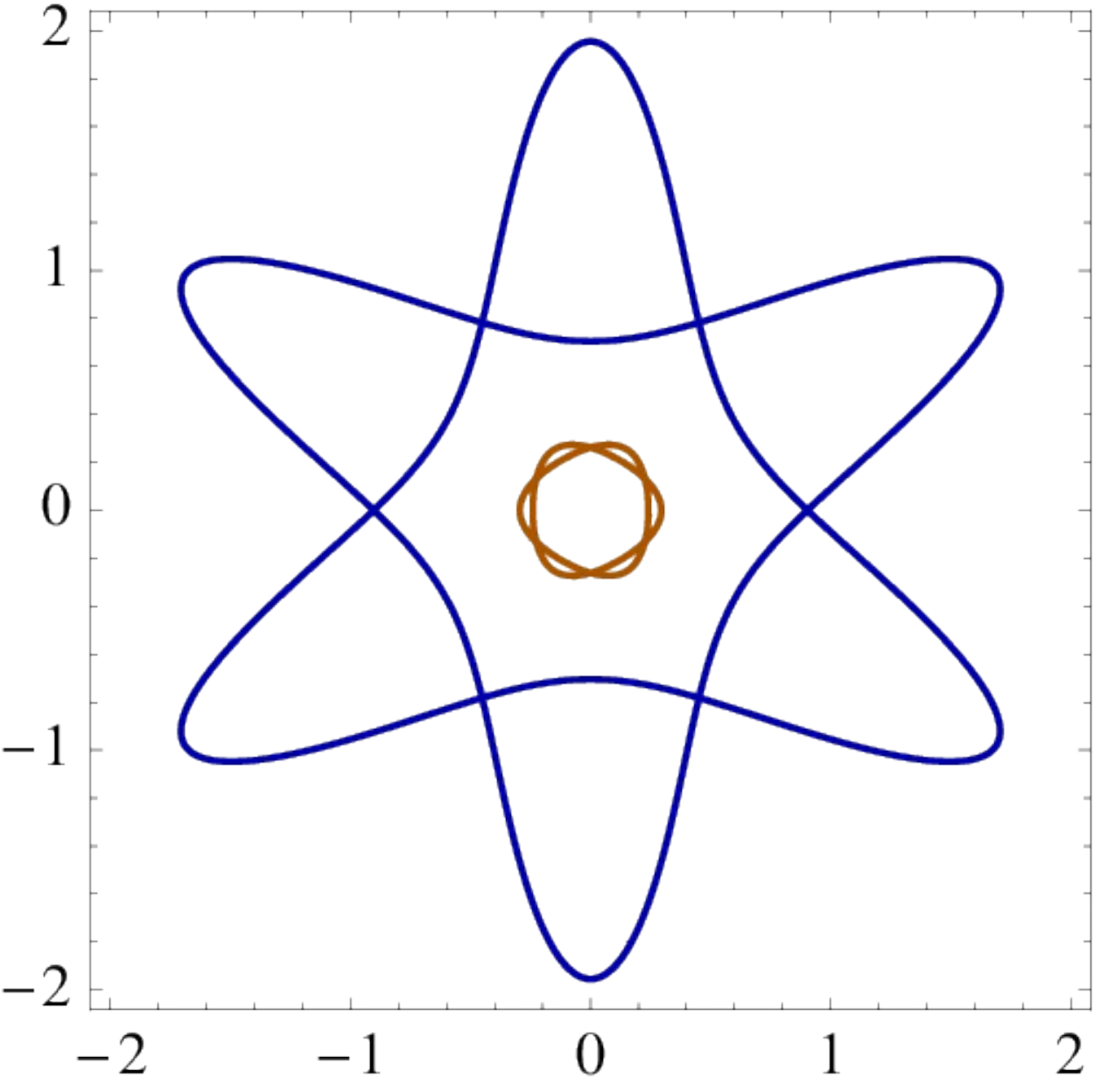}
\hskip0.1cm
\includegraphics[width=3.2cm]{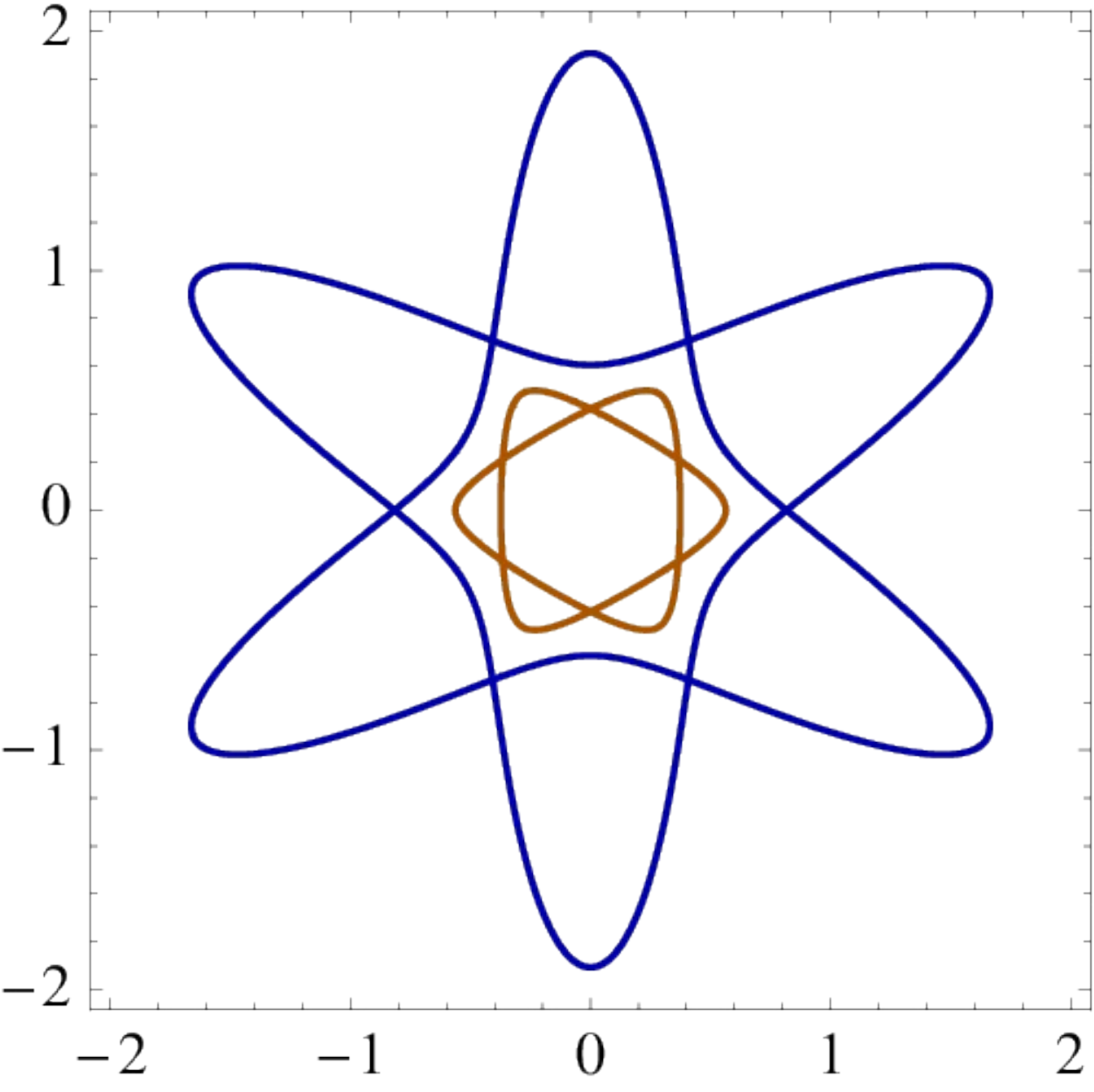}
\hskip0.1cm
\includegraphics[width=3.2cm]{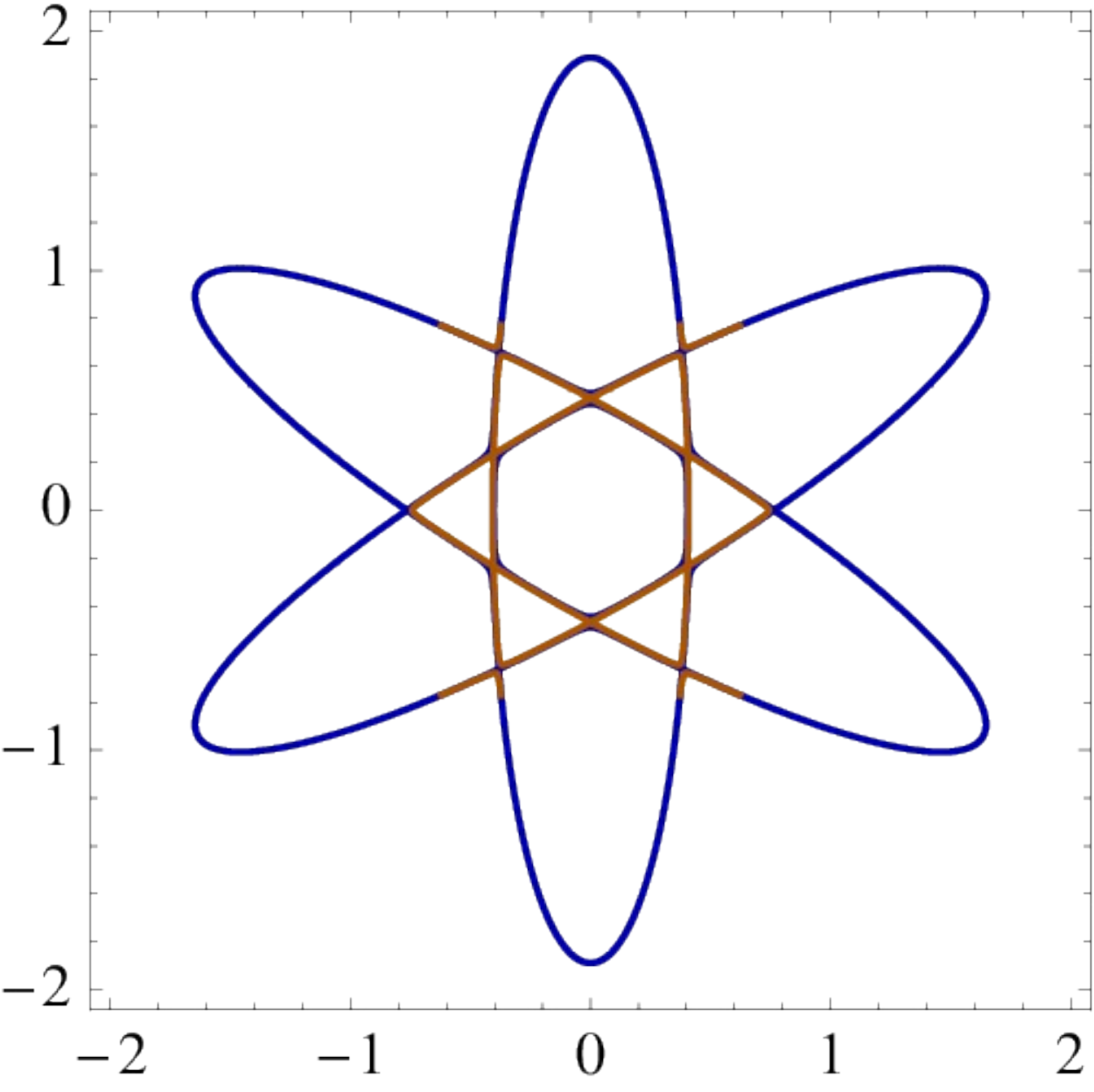}
\\
(a)\hskip3cm
(b)\hskip3cm
(c)\hskip3cm
(d)\hskip3cm
\\[12pt]
\includegraphics[width=3.2cm]{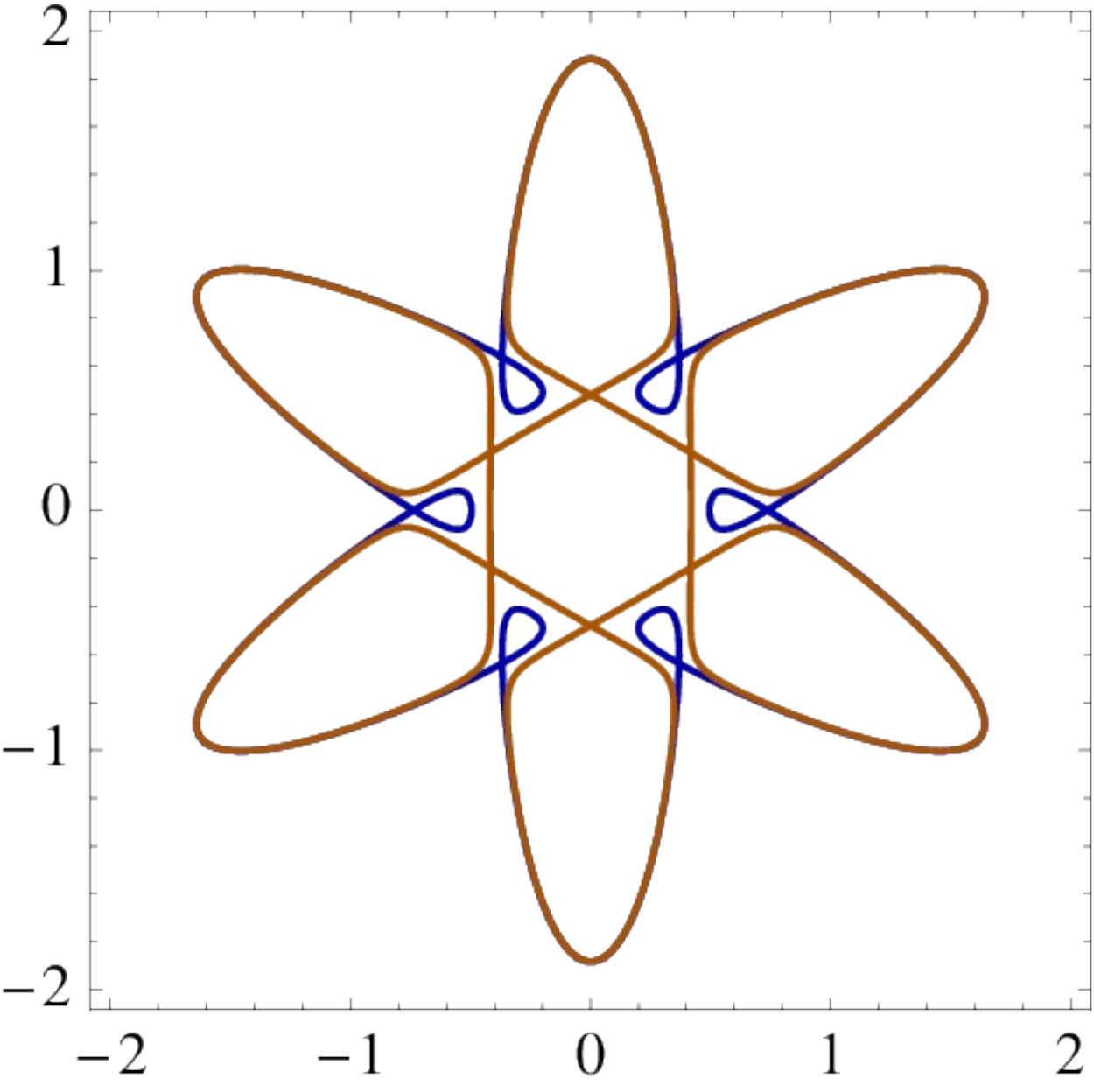}
\hskip0.1cm
\includegraphics[width=3.2cm]{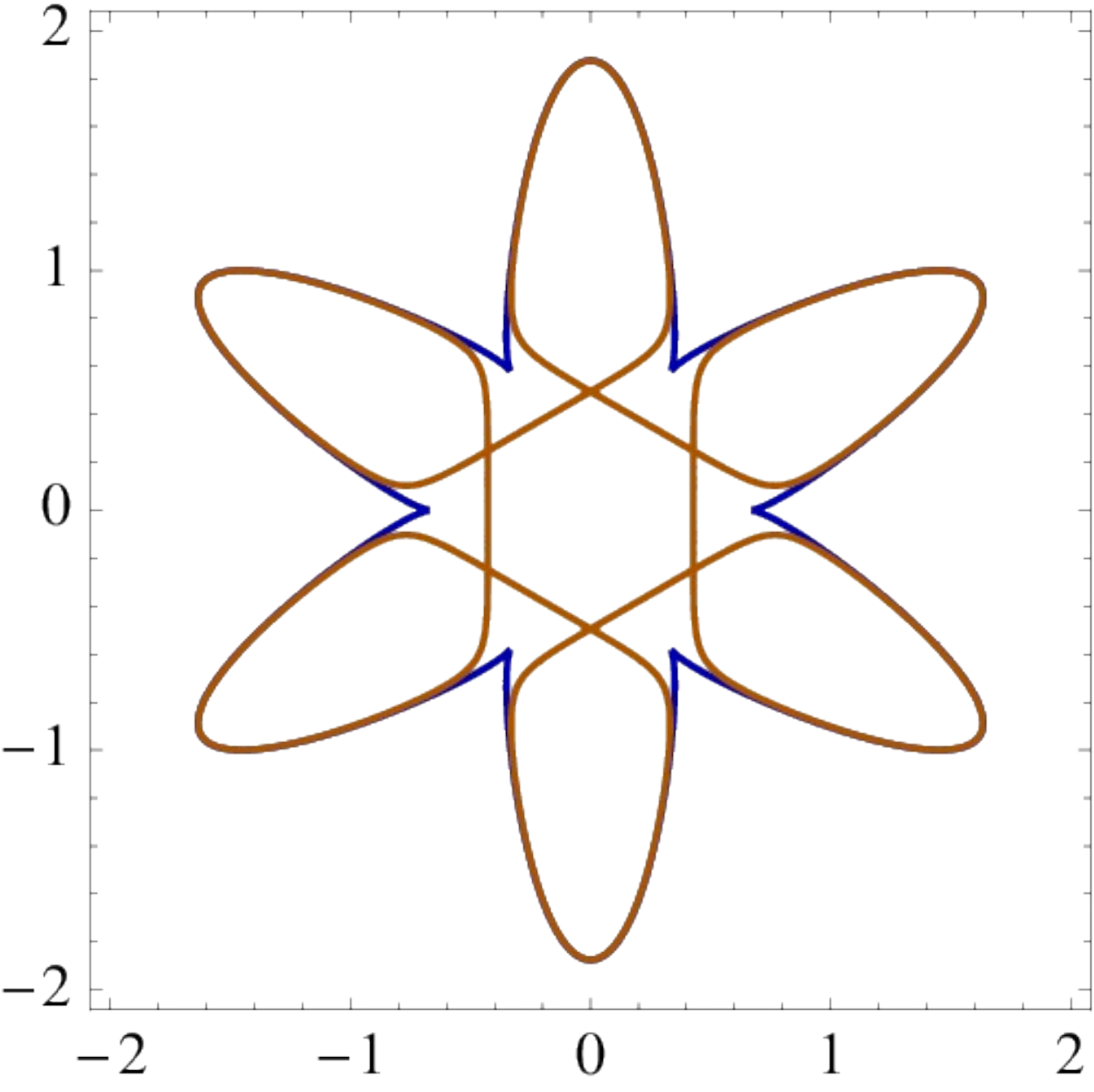}
\hskip0.1cm
\includegraphics[width=3.2cm]{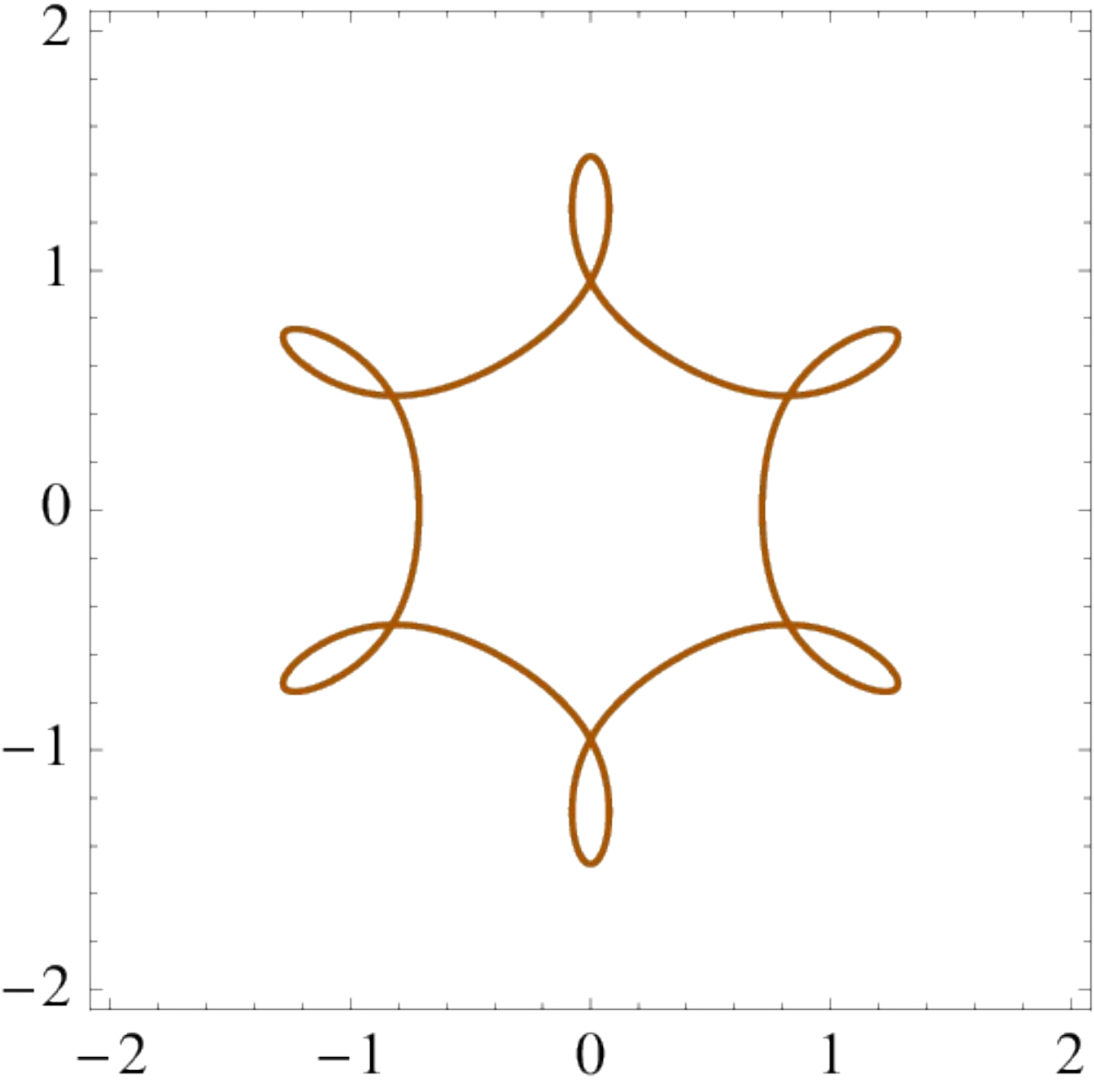}
\hskip0.1cm
\includegraphics[width=3.2cm]{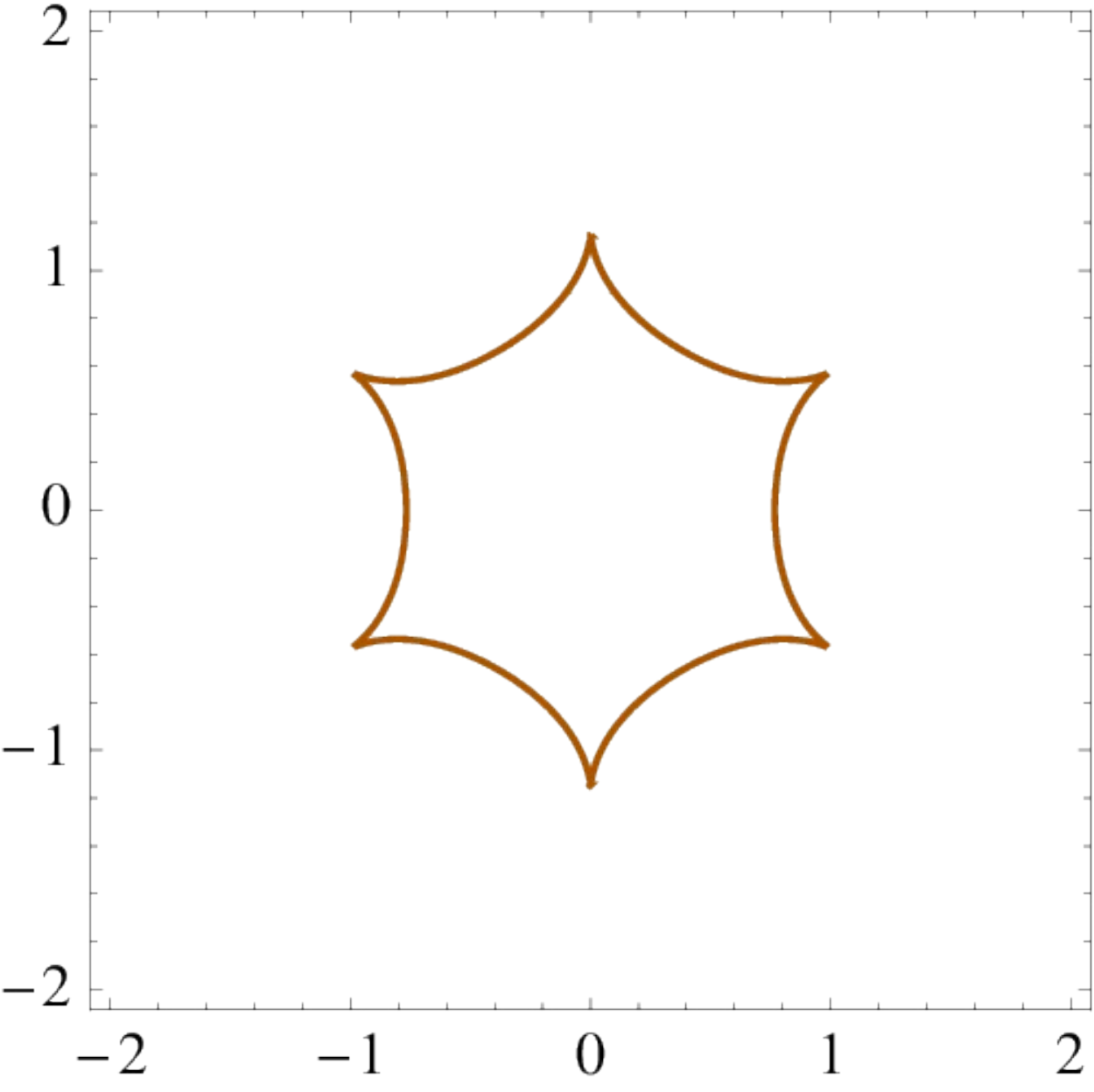}
\\
(e)\hskip3cm
(f)\hskip3cm
(g)\hskip3cm
(h)\hskip3cm

\end{center}
\caption{
Critical level sets of the Hamiltonian (\ref{Eq:barh6})
for $b_0=\frac12$
(the stable case).
\label{Fig:islands6s}}
\end{figure}

\begin{figure}[t]
\begin{center}
\includegraphics[width=3.2cm]{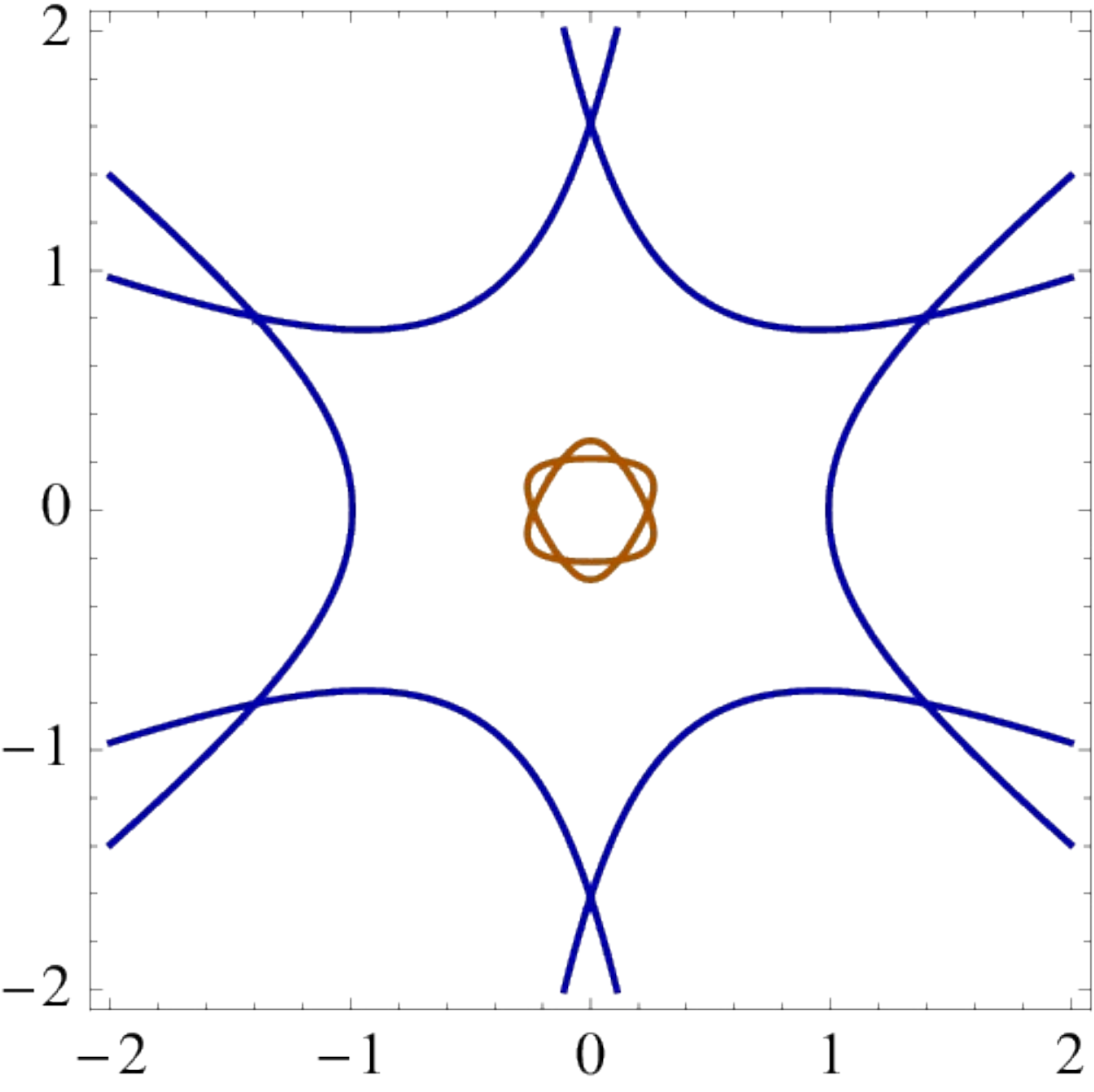}
\includegraphics[width=3.2cm]{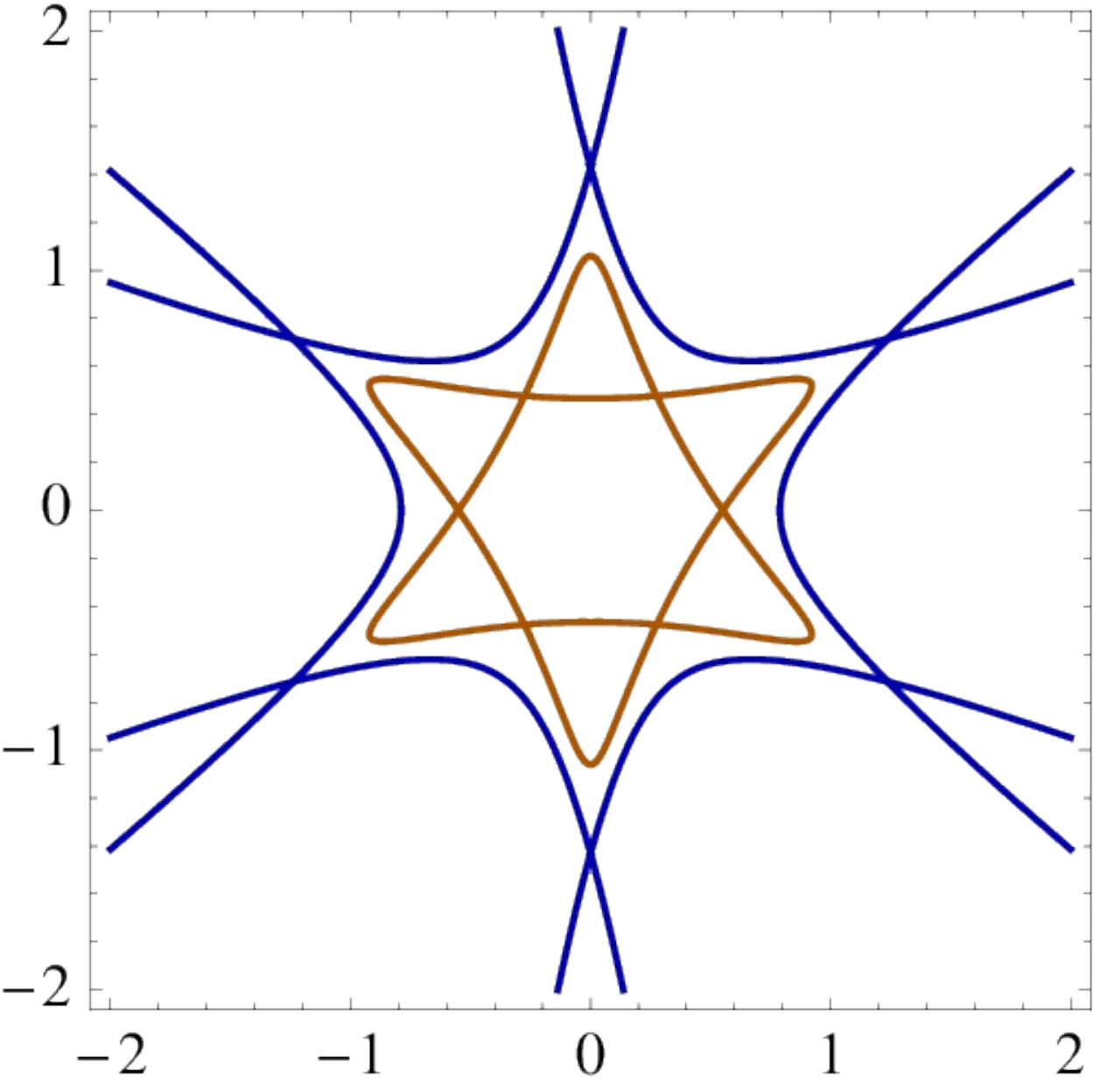}
\includegraphics[width=3.2cm]{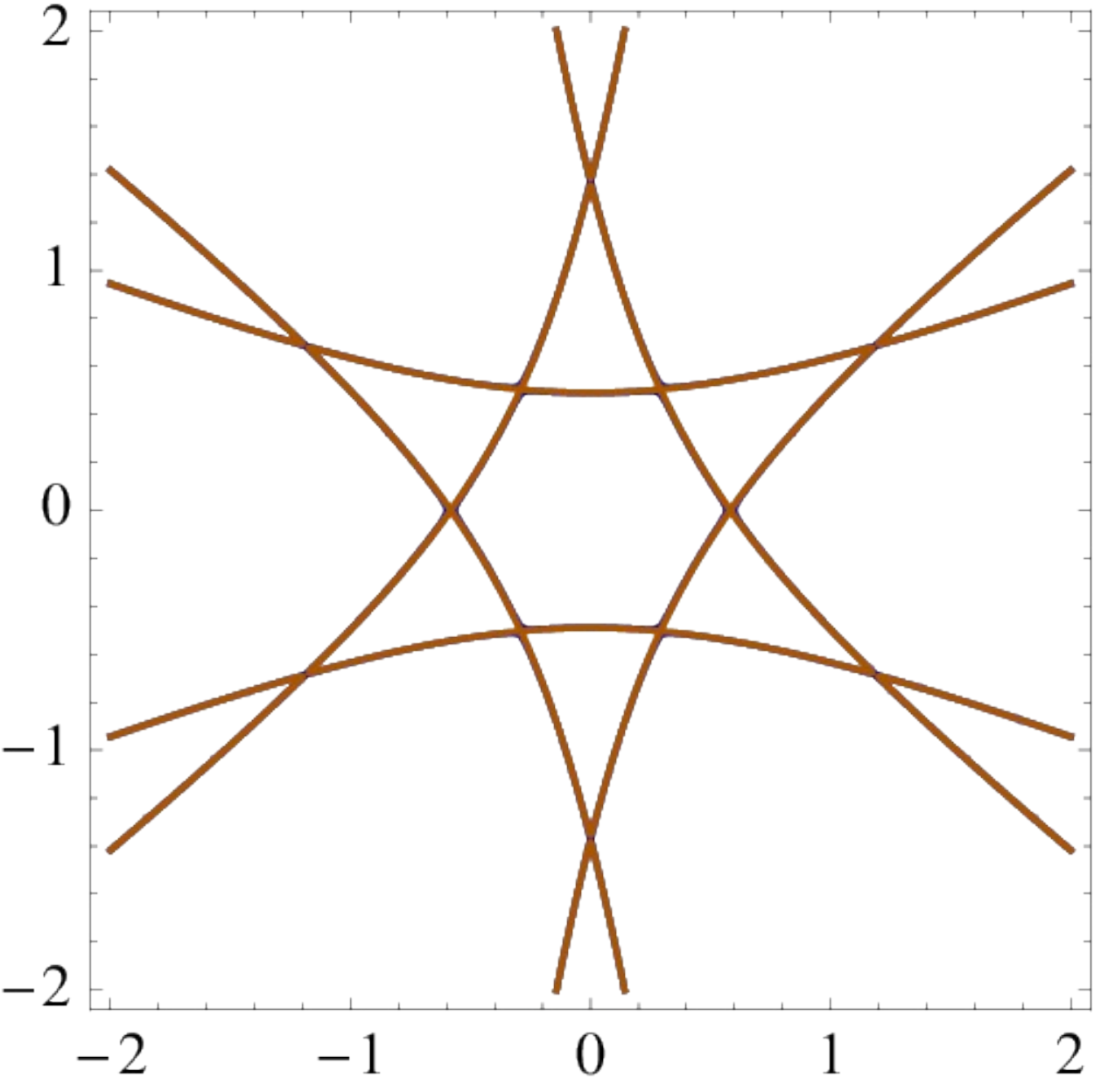}
\\
(a)\hskip3cm
(b)\hskip3cm
(c)\hskip3cm\\[12pt]
\includegraphics[width=3.2cm]{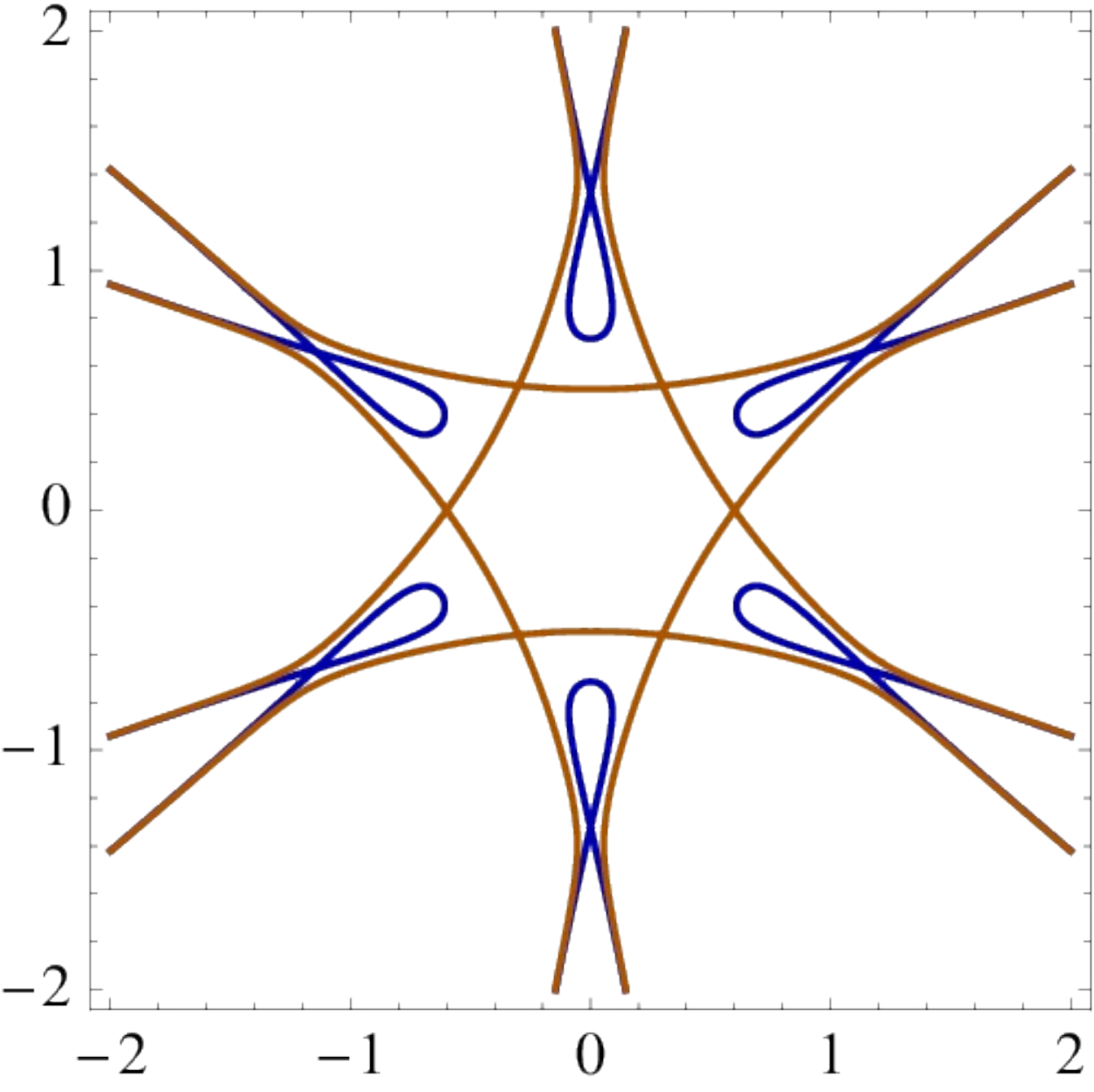}
\includegraphics[width=3.2cm]{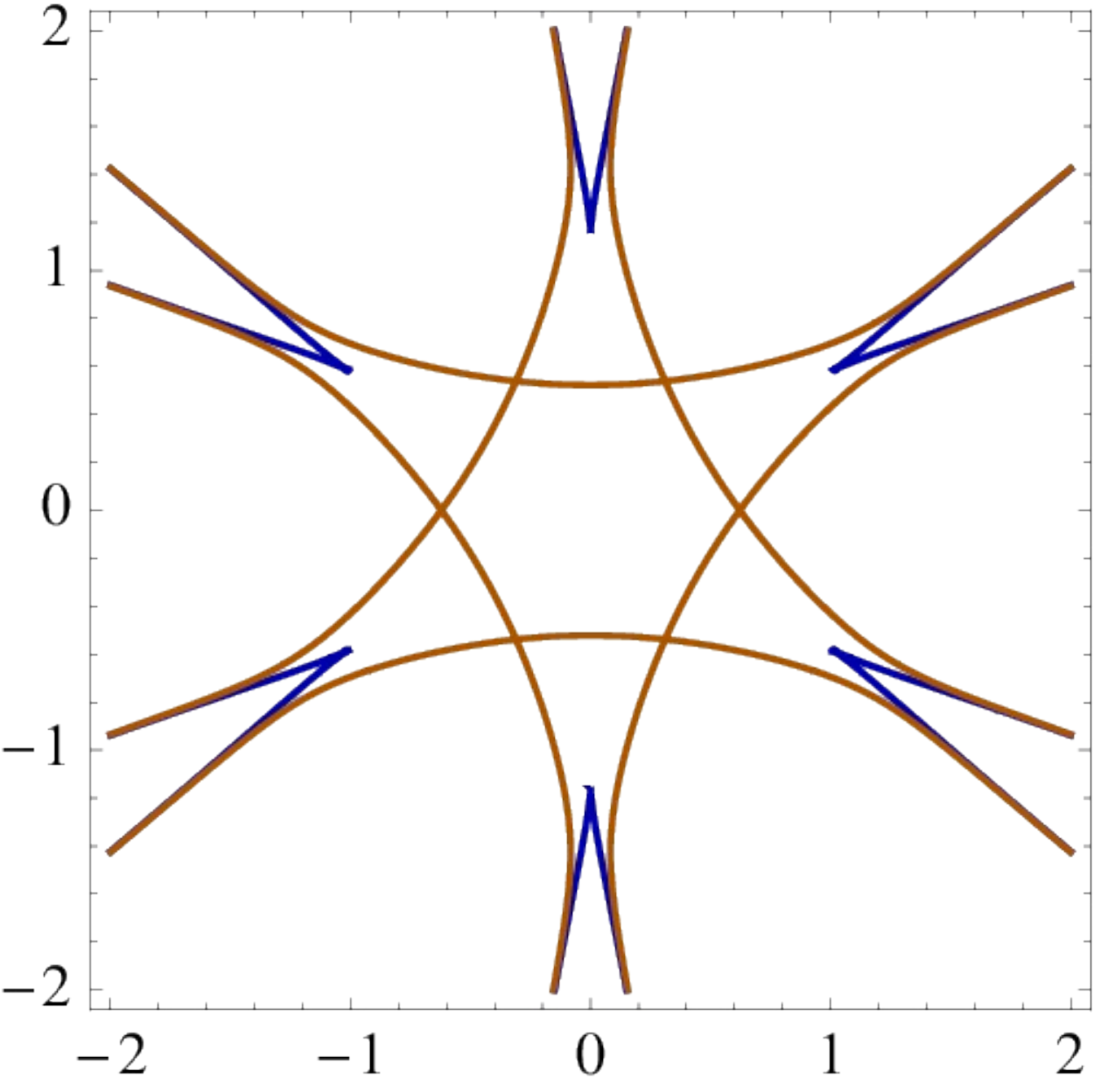}
\includegraphics[width=3.2cm]{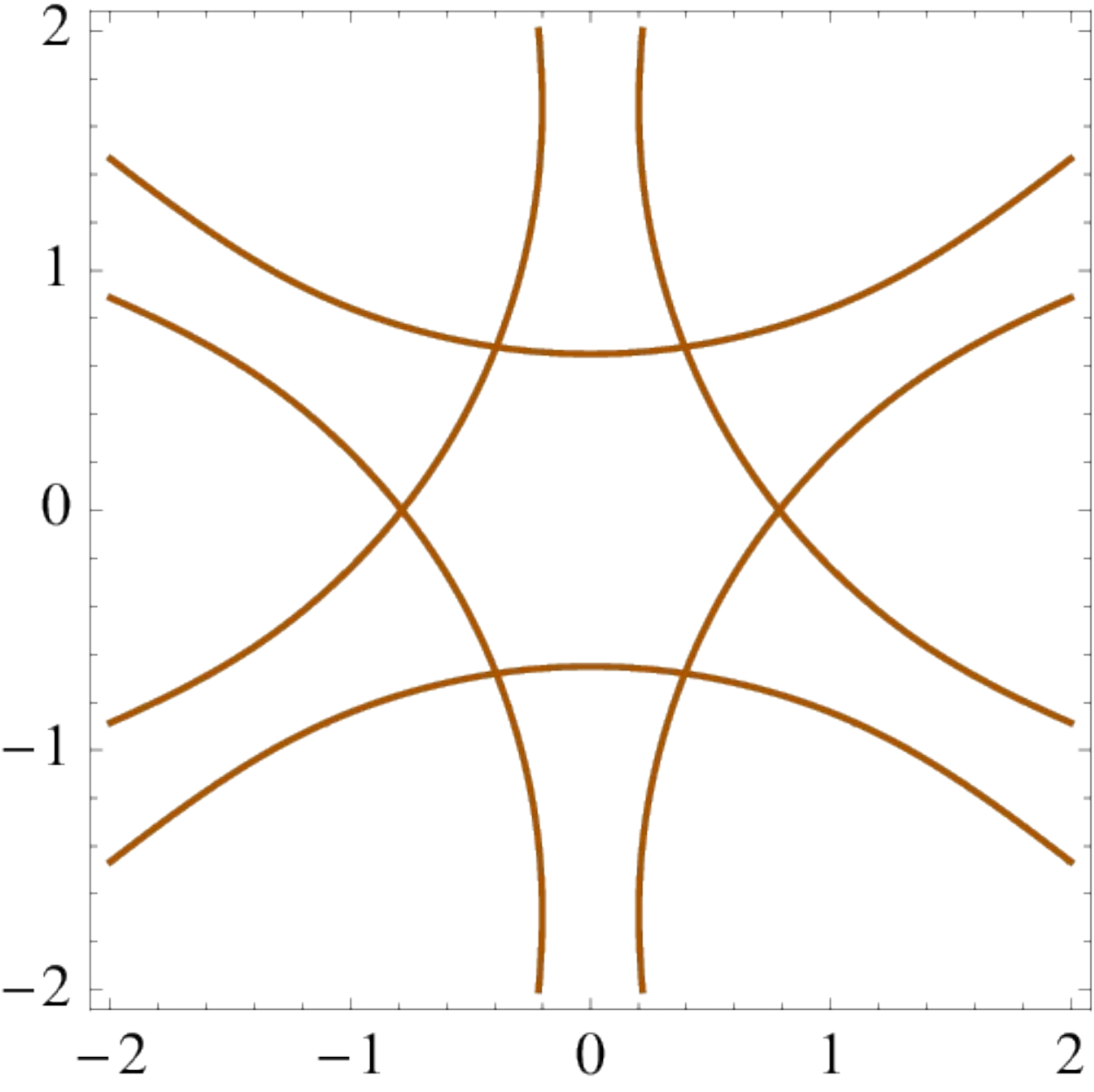}
\\
(d)\hskip3cm
(e)\hskip3cm
(f)\hskip3cm
\end{center}
\caption{Critical level sets of the Hamiltonian (\ref{Eq:barh6})
for $b_0=\frac32$
(the unstable case).
\label{Fig:islands6u}}
\end{figure}

Typical pictures of level sets are illustrated by Fig.~\ref{Fig:n6gen}. The bifurcations 
of islands are illustrated on Figs.~\ref{Fig:islands6s} 
and~\ref{Fig:islands6u} for the stable and unstable case respectively.

\medskip

We note that after the substitution $I=|\delta|^{1/2}J$, $h=|\delta|^{3/2}\bar h$
the Hamiltonian (\ref{Eq:model6}) takes the form
$$
\bar h=\mathrm{sign}(\delta)\,J+|\delta|^{-1/2}\nu J^2+J^3(1+b_0\cos6\varphi)\,.
$$
If $|\delta|\gg\nu^2$, this Hamiltonian is approximated by 
\begin{equation}\label{Eq:barh60}
\bar h_0=\pm J+J^3(1+b_0\cos6\varphi)\,.
\end{equation}
The Hamiltonian $\bar h_0$ depends on the coefficient $b_0$ but is independent of both
$\delta$ and $\nu$.
We conclude that outside a narrow sector near the $\delta$-axis, the critical
level sets of the model Hamiltonian (\ref{Eq:model6})  have the shape which is described
by the Hamiltonian $\bar h_0$.
The level sets for this Hamiltonian are shown on Figure~\ref{Fig:n6gen}.

In order to study bifurcations of the critical level sets of the Hamiltonian (\ref{Eq:model6}) we use the scaling
\begin{equation}\label{Eq:scaling6}
\nu=\epsilon,\quad
\delta=\epsilon^{2} a,
\quad
I=\epsilon J,
\quad
h=\epsilon^{3}\bar h.
\end{equation}
In the new variables the Hamiltonian
  takes the form
\begin{equation}\label{Eq:barh6}
\bar h= a J+J^2+J^3(1+b_0\cos n\varphi)\,.
\end{equation}
We conclude that for $\nu\ne0$ the two-parameter family (\ref{Eq:model6})
is reduce to the family (\ref{Eq:barh6}) which depends on one parameter only.
We note that the transition from (\ref{Eq:model6}) to (\ref{Eq:barh6})
is just a change of variables and parameters and does not involve any approximations.
The bifurcations of the critical level sets of $\bar h$ is illustrated on 
Figs.~\ref{Fig:islands6s}  and~\ref{Fig:islands6u} for the stable and unstable case respectively.
Note that the corresponding analysis takes into account that $\bar h$ 
is to be considered on the domain  $\nu J=\epsilon^2 I>0$ only.

\subsection{$n=5$}

For $n=5$ the normal form  is modelled by the Hamiltonian
\begin{equation}\label{Eq:model5}
h=\delta I+\nu I^2+I^{5/2}\cos 5\varphi\,.
\end{equation}
We note that, in contrast to the cases with $n\ge 6$, in the cases of $n\le 5$
the cubic term $I^3$ can be excluded from the model as it is much smaller
than the term proportional to $I^{n/2}$ and, consequently,
does not affect the qualitative properties of the bifurcation diagram.

The bifurcation diagram for $n=5$ looks similar to the unstable case of~$n=6$.
In particular the equilibrium at the origin is unstable when $\delta=\nu=0$.
The corresponding level set $\{h=0\}$ is not compact and consists of 5 straight lines
defined by the equation $\cos 5\varphi=0$.

Similar to the previous cases, the critical points of $h$ are determined from the equation
$\frac{\partial h}{\partial I}=0$ and $\varphi=0$ or $\varphi=\pi/5 \pmod{2\pi} $.
Therefore the problem is reduced to finding positive solutions for the equations:
\begin{equation}\label{Eq:n5crit}
\delta +2\nu I+\sigma \tfrac{5}{2} I^{3/2}=0 , \qquad \sigma =\pm 1.
\end{equation}
These equations can be rewritten in the form $\delta=f_\sigma(I,\nu)$.
For a fixed $\nu\ne 0$, one of the functions  $f_\sigma$ is monotone
($\sigma\nu>0$) and the other one ($\sigma\nu<0$)
has an extremum at
$$
I^{1/2}=-\tfrac{8}{15}\sigma\nu\,.
$$
Substituting this value into the equation (\ref{Eq:n5crit})
we obtain the relationship between the parameters $\delta$ and $\nu$
which corresponds to a doubled critical point of the Hamiltonian $h$:
$$
\delta= -\tfrac{128}{675}\nu^3.
$$
This equation defines the boundary between domains $D_1$ and $D_2$,
and between $D_1'$ and $D_2'$ on the bifurcation diagram shown on Figure~\ref{Fig:n5diagram}(a).
In $D_1$ and $D_2$ the Hamiltonian $h$ has 5 critical points (of saddle type) 
and in $D_2$ and $D_2'$ it has 15 critical points:
5  elliptic and 10 saddles.

When the parameters  $(\delta , \nu )$ are in $D_1$ or $D_1'$,
singular level sets of $h$ look similar to the one shown  on Figure~\ref{Fig:n5diagram}(b).

\begin{figure}
\begin{center}
\includegraphics[height=4.7cm]{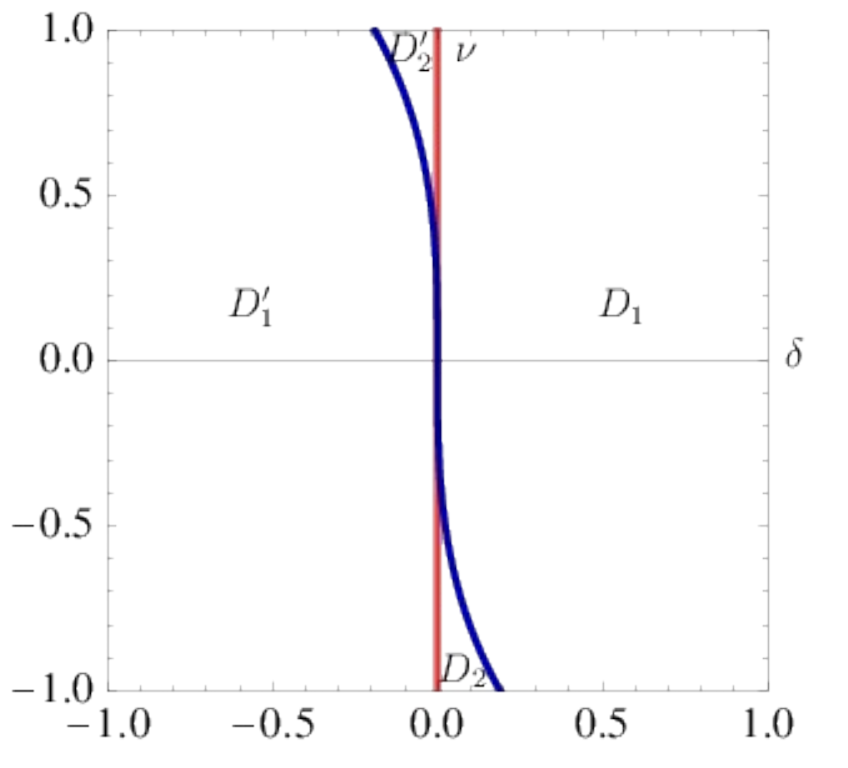}
\hskip1cm
{\includegraphics[height=4.7cm]{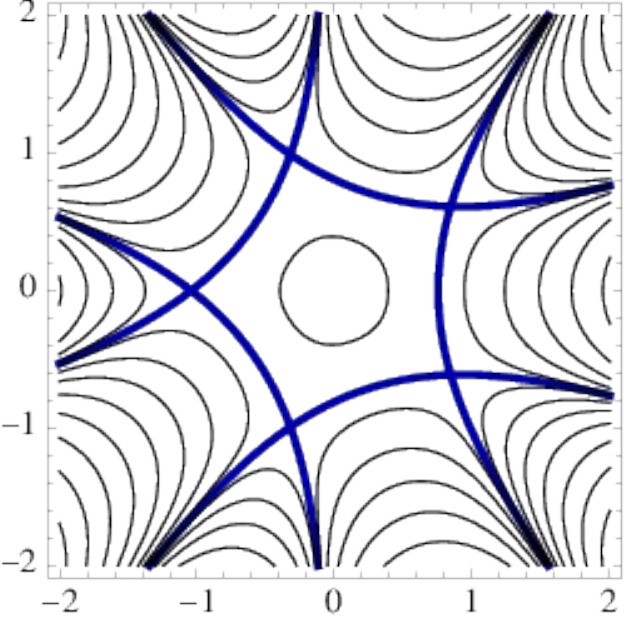}}\\
(a)\kern5.5cm(b)
\end{center}
\caption{(a) Bifurcation diagram on the plane $(\delta,\nu)$ for $n=5$
and (b)  level sets for the limit Hamiltonian (\ref{Eq:barh50}), the critical level set is 
shown using the bold lines.
\label{Fig:n5diagram}}
\end{figure}

\begin{figure}
\begin{center}
\includegraphics[width=3.3cm]{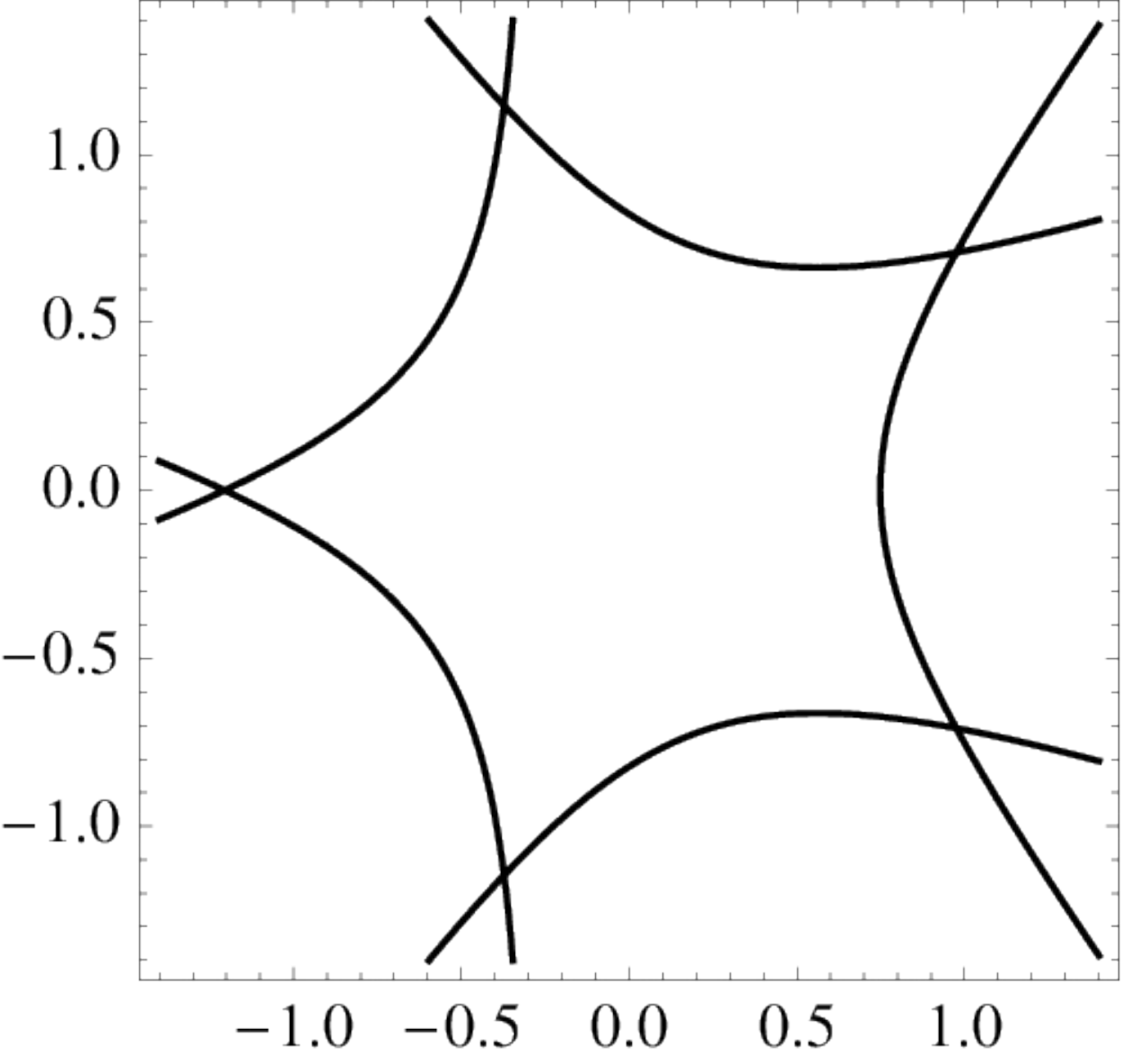}
\includegraphics[width=3.3cm]{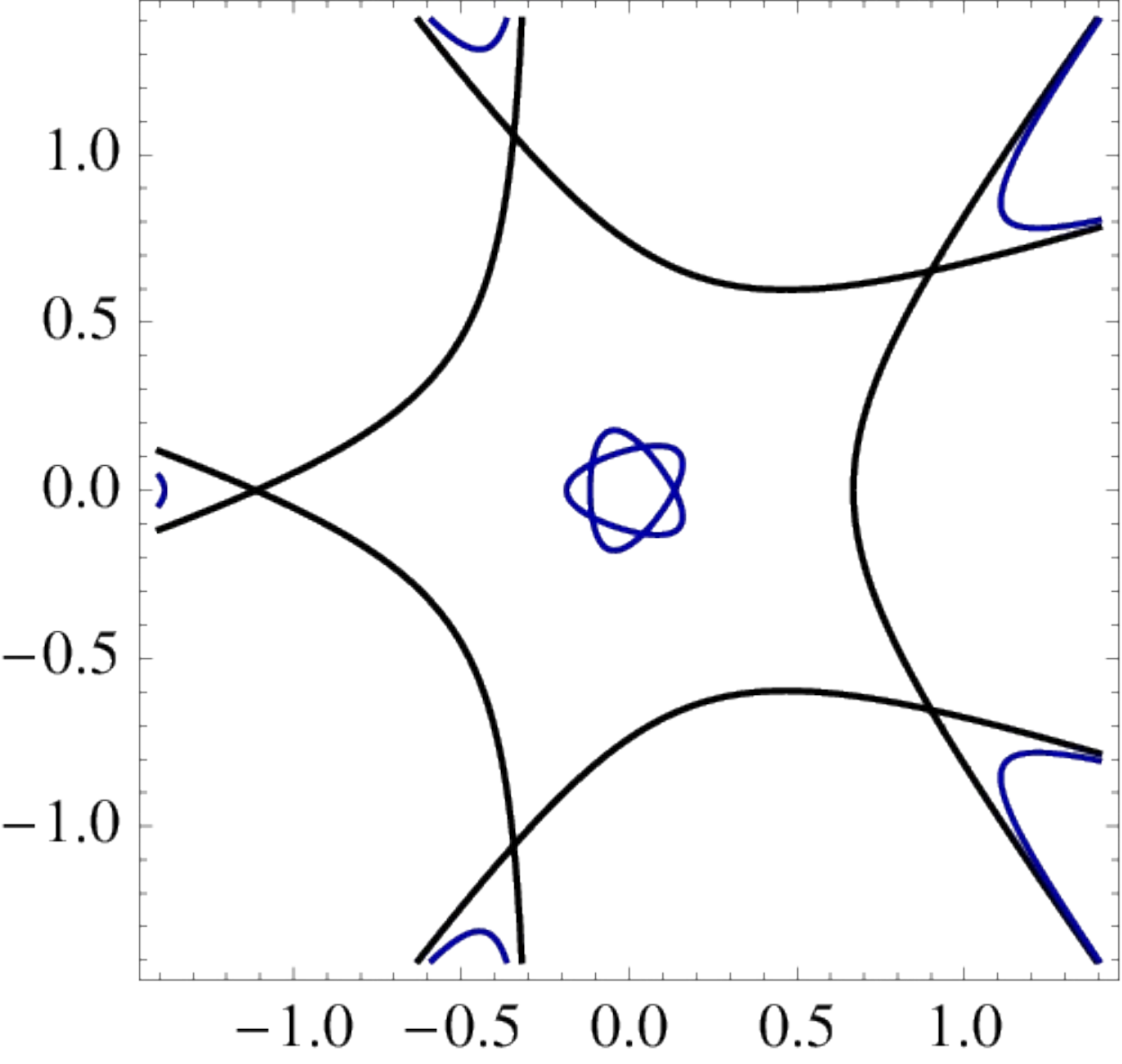}
\includegraphics[width=3.3cm]{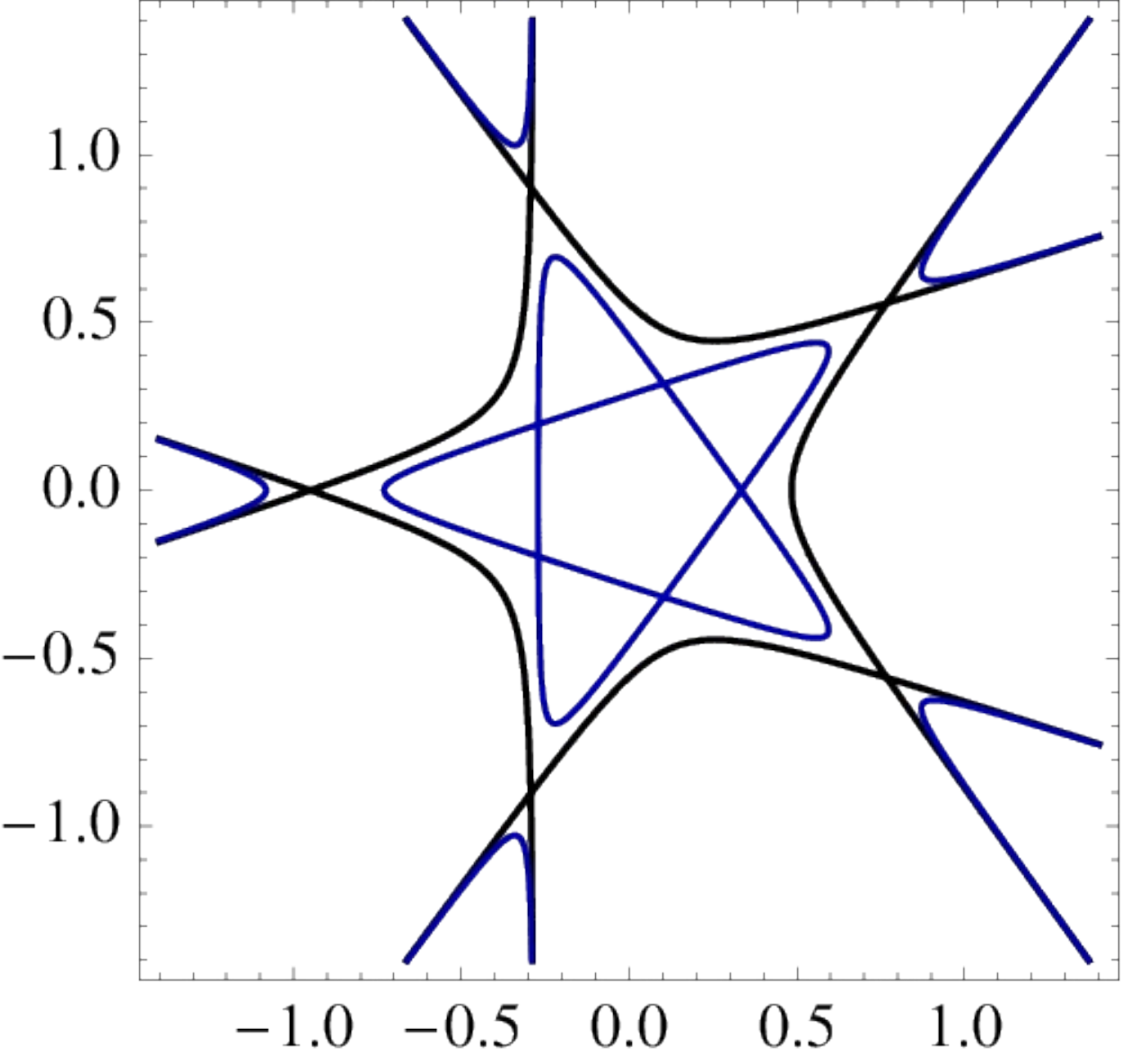}
\includegraphics[width=3.3cm]{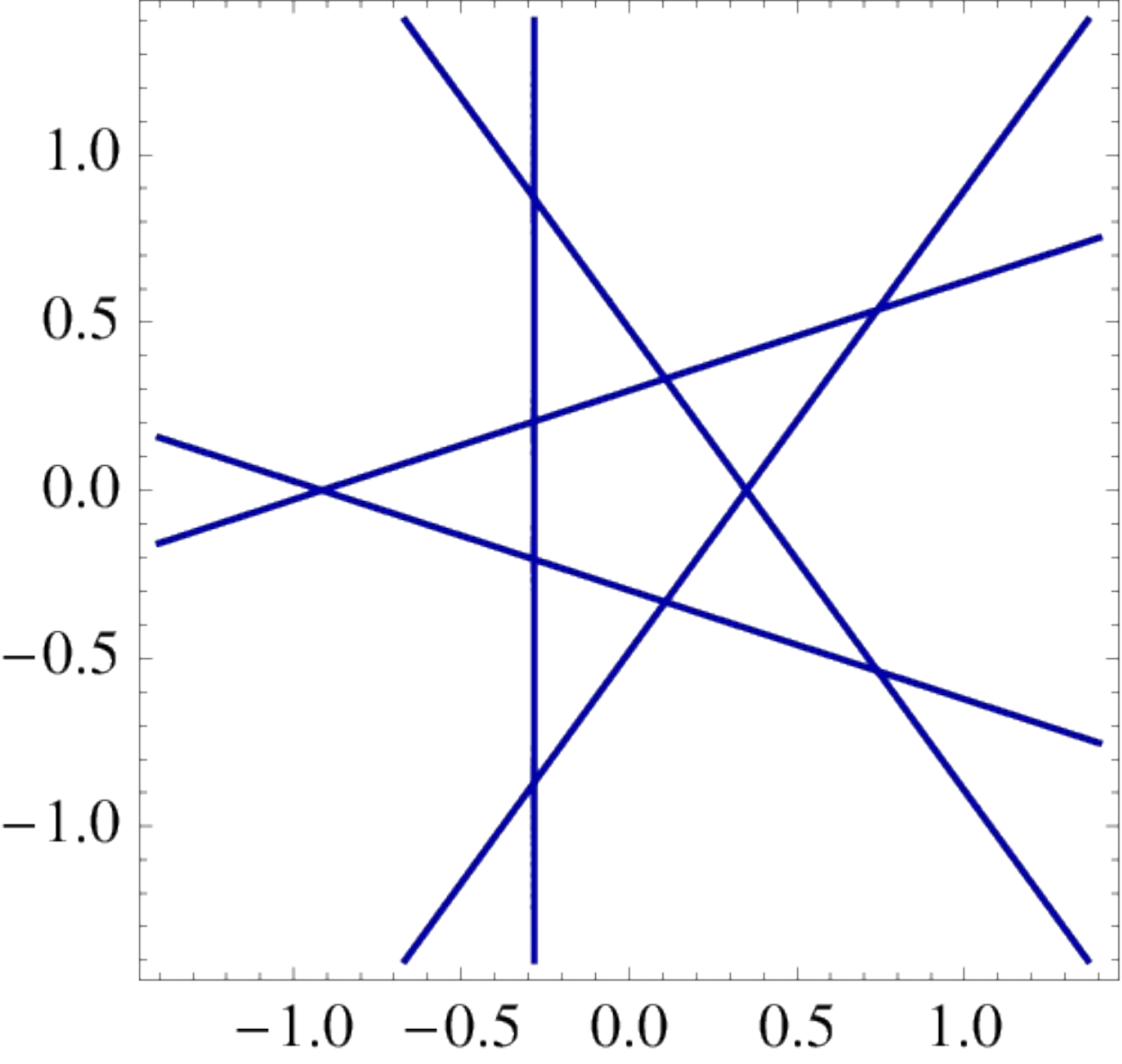}\\
(a)\hskip3.0cm
(b)\hskip3.0cm
(c)\hskip3.0cm
(d)\hskip3.0cm
\\[12pt]
\includegraphics[width=3.3cm]{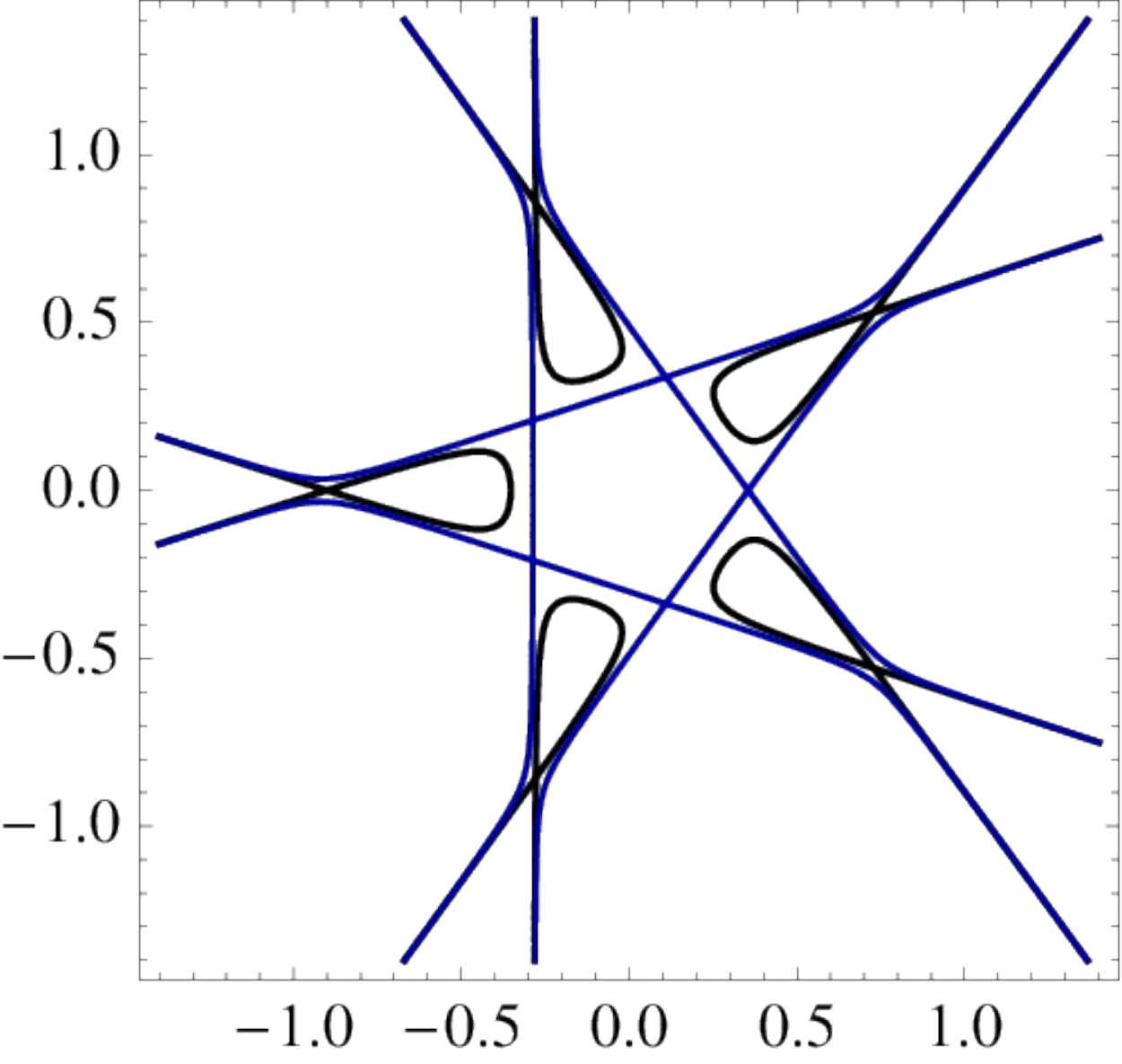}
\includegraphics[width=3.3cm]{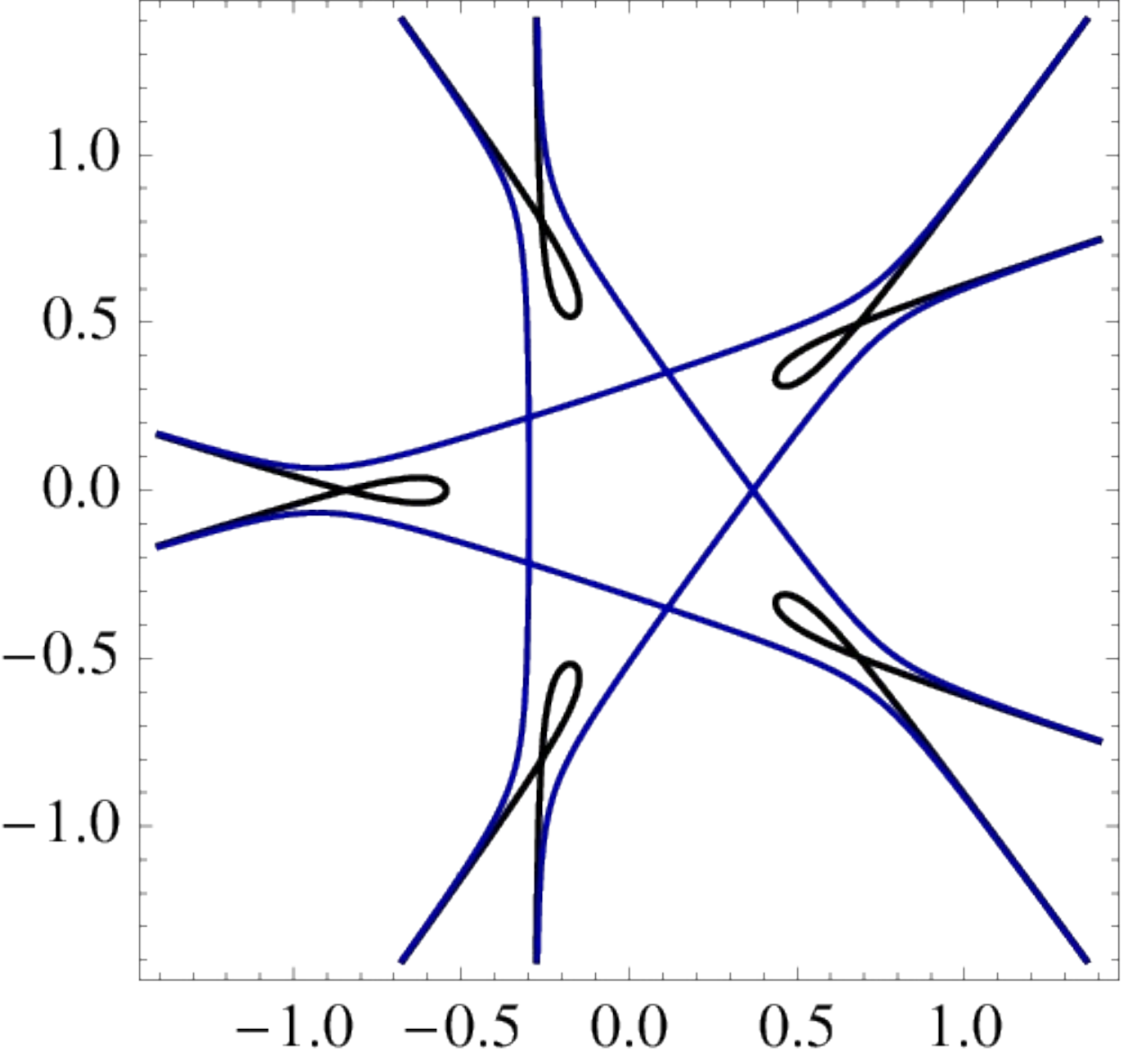}
\includegraphics[width=3.3cm]{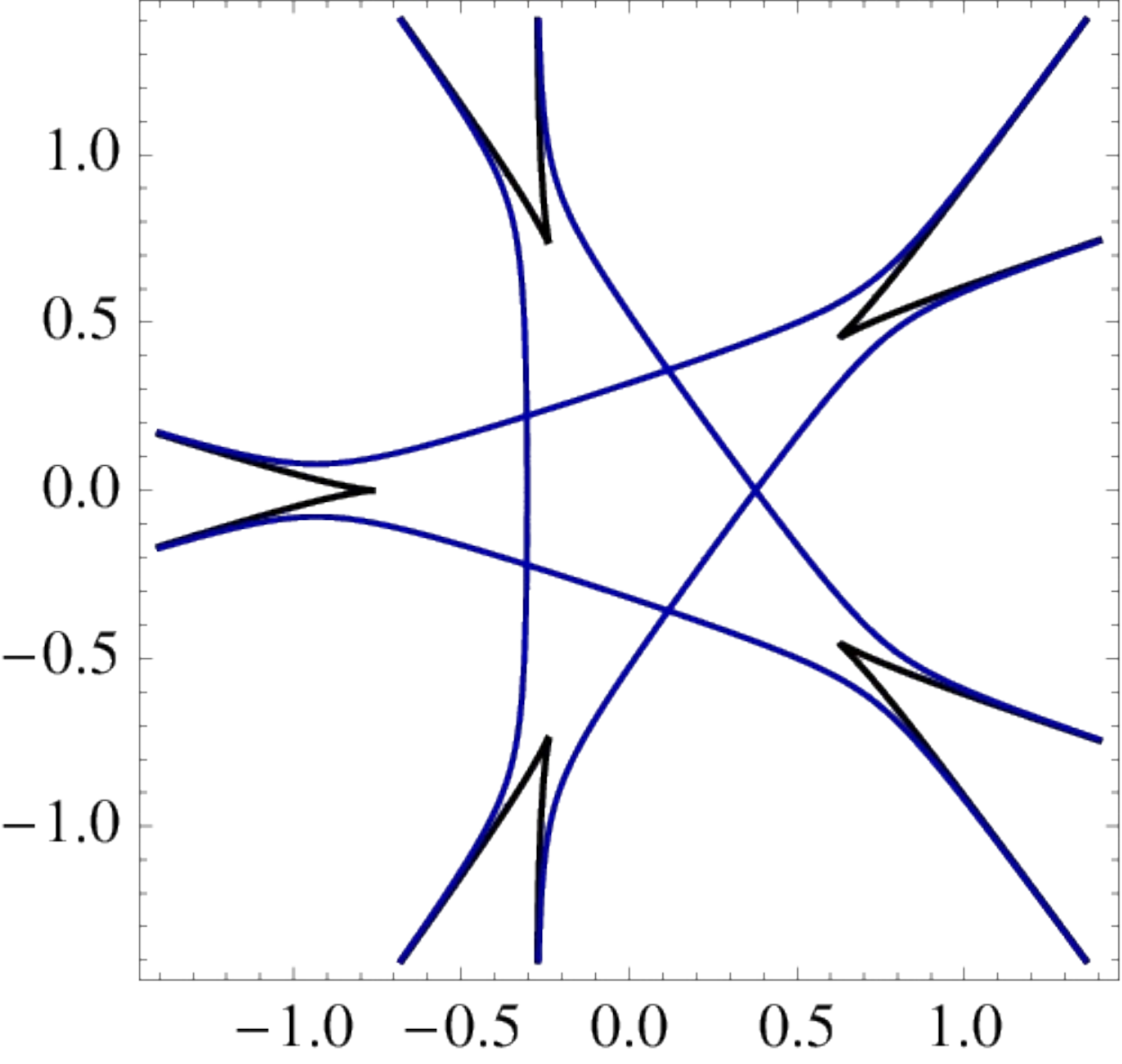}
\includegraphics[width=3.3cm]{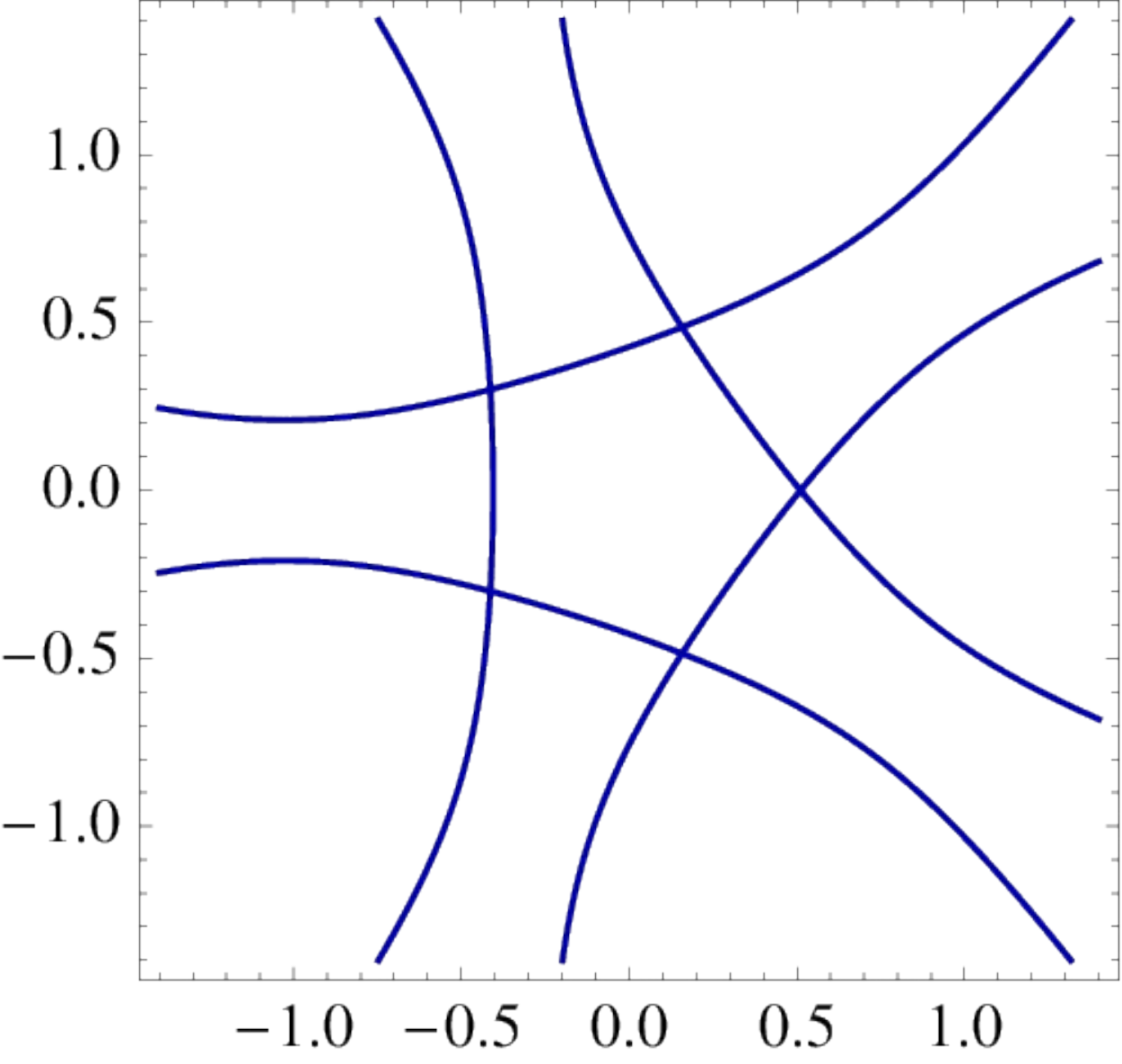}\\
(e)\hskip3cm
(f)\hskip3cm
(g)\hskip3cm
(h)\hskip3cm
\end{center}
\caption{Critical level sets lines for the Hamiltonian (\ref{Eq:modelbif5}), the parameter $a$ decreases from left to right
and from top to bottom. Special values:
(a) $a=0$, (d) $a=-\frac4{25}$, (g) $a=-\frac{128}{675}$.}
\label{Fig:n5bif}
\end{figure}

A chain of $n$ islands is born from the origin and go through a bifurcation shown in Figure~\ref{Fig:n5bif} as the parameters $(\delta , \nu )$
 move in anticlockwise direction through $D_2$ or $D_2'$ in the plane of Figure~\ref{Fig:n5diagram}(a).
We note that the symmetry of the Hamiltonian (\ref{Eq:model5})
allows us to consider the case $\delta\ge0$ only.

In order to study the shape of the islands we apply the scaling
\begin{equation}\label{Eq:scaling5a}
I=\delta^{2/3} J,
\quad
h=\delta^{5/3}\bar h
\,.
\end{equation}
In the new variables the Hamiltonian
function (\ref{Eq:model5})  takes the form
\begin{equation}
\bar h=   J+ \nu \delta^{-1/3}J^2+J^{5/2}\cos 5\varphi\,.
\end{equation}
If $\nu \delta^{-1/3}$ is small, the quadratic term can be ignored and we arrive to the  Hamiltonian
\begin{equation}\label{Eq:barh50}
\bar h_0 =J+J^{5/2} \cos 5 \varphi \, . 
\end{equation}
Its critical  points are located at the point $J_{cr}=(\tfrac{2}{5})^{2/3}$, $\varphi_{cr}=\pi /5 \pmod {2\pi} $ and the corresponding
 critical value is given by $ \bar h_0 (J_{cr},\varphi_{cr} ) =\tfrac{3}{5}(\tfrac{2}{5})^{2/3}$.
 So the critical set is defined by the equation
\begin{equation}
\label{Eq:ham5D1} 
J+J^{5/2} \cos 5 \varphi =\tfrac{3}{5}(\tfrac{2}{5})^{2/3}  .
\end{equation}
This critical set and some non-critical level lines of $\bar h_0$ are shown on  Figure~\ref{Fig:n5diagram}(b).

If $|\delta|\gg|\nu|^3$, the critical points of $\bar h$ are close to the critical points of $\bar h_0$
and the shape of the corresponding critical set is 
approximately defined by the equation (\ref{Eq:ham5D1}). 
Taking into account the scaling, we conclude that the critical counts of the
original model (\ref{Eq:model5}) are located on the circle $I=J_{\mathrm{cr}}\delta^{2/3}(1+O(\nu\delta^{-1/3}))$.

Note that on a small disk $\delta^2+\nu^2\le \epsilon_0^2\ll 1$, 
 the assumption $|\delta| \gg |\nu^3 |  $  is violated 
 only in a narrow zone near the $\delta$-axis.
 The relative area of this zone converges to zero when $\epsilon_0$ goes to zero. This zone includes 
the domains  $D_2$ and $D_2'$.

In order to study the changes in the critical level sets of $h$ in the region where the above
assumption is possibly violated,
we introduce another scaling
\begin{equation}\label{Eq:scaling5b}
\delta=a\nu^3 ,
\quad
I=\nu^2 J,
\quad
h=\nu^5\bar h\,.
\end{equation}
In the new variables the Hamiltonian
function (\ref{Eq:model5})  takes the form
\begin{equation}\label{Eq:modelbif5}
\bar h= a   J+J^2+J^{5/2}\cos 5\varphi\,.
\end{equation}
We note that the Hamiltonian $\bar h$ depends on one parameter only.
Moreover, the change is defined provided $\nu\ne0$.
On every cubic curve $\delta = a \nu^3$ in the plane $(\delta, \nu)$, 
 the Hamiltonian (\ref{Eq:model5}) is equivalent to the same Hamiltonian
 (\ref{Eq:modelbif5}) up to scaling of the space, energy and time variables.
 
 The Hamiltonian (\ref{Eq:modelbif5}) has $n$ saddle critical points if $a>0$ and $a<-\frac{128}{625}$.
 The number of saddle critical points is $2n$ when $0>a>-\frac{128}{625}$. We note that
 for $a=-\frac4{25}$ both families of critical points belong to a single critical level set. The 
  changes in the critical level sets are illustrated on Figure~\ref{Fig:n5bif}.


\section{Simplification of formal Hamiltonians}
\subsection{Simplification of a degenerate Hamiltonian \label{Se:h22is0}}

It is well known that the complex variables defined by
$$
z=x+iy \qquad\mbox{and}\qquad\bar z=x-i y
$$
facilitate manipulation with real formal series in variables $x,y$.
It is important to note that $\bar z$ is not necessarily the complex conjugate of $z$, the latter
is denoted by $z^*$ in this paper. Of course, $\bar z=z^*$ if both $x$ and $y$ are real.

In this variables, a rotation $R_{\alpha_0}$ 
takes the form of the multiplication: $z\mapsto \lambda_0 z$, $\bar z\mapsto \lambda_0^*\bar z$,
where $\lambda_0=e^{i\alpha_0}$. Let $\lambda_0$ be resonant of order $n$, i.e., $\lambda_0^n=1$. 
Any formal
power series $h$ invariant with respect to the rotation $R_{\alpha_0}$ has the form
\begin{equation}\label{Eq:hgeneral}
h(z,\bar z)=\sum_{\substack{ k,l\ge0\cr k=l\pmod n}} h_{kl}z^k \bar z^l \,.
\end{equation}
We say that the series $h$ consists of resonant terms as each term of the series is invariant under the rotation by $\alpha_0$.
It is easy to come back from the variables $(z,\bar z)$
to the symplectic polar coordinates by substituting $z=\sqrt{2I} e^{i\varphi}$ and $\bar z=\sqrt{2I}e^{-i\varphi}$.

We note that the formal interpolating Hamiltonian of the resonant normal form theory
(described in the introduction) has the form (\ref{Eq:hgeneral}) and satisfy two additional properties.
First, the series is real-valued, i.e., it has real coefficients when written in terms of $x,y$ variables,
which is equivalent to the condition $h_{kl}=h_{lk}^*$ for all $k,l\ge0$.
Second, the series do not contains terms of order lower than two.

We use tangent-to-identity formal canonical transformations to simplify the series $h$
by eliminating as many resonant terms as possible without braking the
symmetry of the series. The  simplification procedure depends on
the order of the resonance and on some lowest order terms of the series. 
The following proposition describes one of the possible simplifications of the series
which still involve infinitely many coefficients.
We note that no further simplification is  possible as the coefficients of the
simplified series are defined uniquely and consequently 
can be considered as moduli of the formal canonical classification.

\begin{proposition}\label{Pro:h22is0}
Let $h$ be a formal power series of the form
\begin{equation}\label{Eq:hres}
h(z,\bar z)=\sum_{\substack{k+l\ge 3, \;k,l\ge0\cr k=l\pmod n}} h_{kl}z^k \bar z^l \,.
\end{equation}
If  $h_{22}=0$, $ h_{n0}>0$ and $h_{kl}=h_{lk}^*$ for all $k,l$, 
(and additionally $h_{33}\ne0$ for $n\ge6$ only)
then there exists a formal tangent-to-identity
canonical change of variables which 
transforms the Hamiltonian $h$ into $\tilde h$, where $\tilde h$ has the
following form.
\begin{itemize}
\item
If $n=3$,
\begin{equation}\label{Eq:h22is0nis3}
\tilde h(z,\bar z)=(z\bar{z})^{3}\sum_{\substack{k\ge0\cr k\ne 2\pmod 3}} a_k {(z\bar{z})}^k
+(z^{3}+\bar z^3) \sum_{\substack{k\ge0\cr k\ne 2\pmod 3}} b_k {(z\bar{z})}^k ,
\end{equation}%
\item
If $n=4$,
\begin{equation}\label{Eq:h22is0nis4}
\tilde h(z,\bar z)=(z\bar{z})^{4}\sum_{\substack{k\ge0\cr k\ne 3\pmod 4}} a_k {(z\bar{z})}^k
+(z^{4}+\bar z^4) \sum_{\substack{k\ge0\cr k\ne 3\pmod 4}} b_k {(z\bar{z})}^k ,
\end{equation}
\item
If $n=5$,
\begin{equation}\label{Eq:h22is0nis5}
\tilde h(z,\bar z)=(z\bar{z})^{3}\sum_{\substack{k\ge0\cr k\ne 1\pmod 5}} a_k {(z\bar{z})}^k
+(z^{5}+\bar z^5) \sum_{\substack{k\ge0\cr k\ne 4\pmod 5}} b_k {(z\bar{z})}^k ,
\end{equation}
\item
If $n\ge 6$,
\begin{equation}\label{Eq:h22is0nge6}
\tilde h(z,\bar z)=(z\bar{z})^{3}\sum_{k\ge0} a_k {(z\bar{z})}^k
+(z^{n}+\bar z^n) \sum_{\substack{k\ge0\cr k\ne 2\pmod 3}} b_k {(z\bar{z})}^k .
\end{equation}
In all cases the coefficients $a_k$ and $b_k$ are real. They
 are  defined uniquely provided the leading coefficient is normalised by the condition $b_0>0$.
\end{itemize}
\end{proposition}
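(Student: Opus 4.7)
The plan is to build the required canonical change of variables as the time-one map $\Phi^1_g$ of a formal Hamiltonian $g$, degree by degree. To preserve the $R_{\alpha_0}$-invariance and reality of $h$, I take $g$ to be a formal sum of real resonant monomials starting at degree~$3$, which also makes $\Phi^1_g$ tangent to the identity. Writing $g=\sum_{m\ge 3}g_m$ and $h=\sum_{d\ge d_0}h_d$ as homogeneous decompositions (with $d_0=n$ for $n\in\{3,4,5\}$ and $d_0=6$ for $n\ge 6$), the Lie-series expansion of $\tilde h=h\circ\Phi^1_g$ gives, at each degree $d$, a cohomological equation of the form
\[
\tilde h_d=h_d-\{g_{d-d_0+2},\,h_{d_0}\}+\Psi_d\bigl(g_3,\dots,g_{d-d_0+1};\,h_{d_0},\dots,h_{d-1}\bigr),
\]
in which $\Psi_d$ is determined by what has already been fixed. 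I would solve it by choosing $g_{d-d_0+2}$ so that $\tilde h_d$ lies in a preselected complement of the image of the leading cohomological operator $\mathcal L:=\mathrm{ad}_{h_{d_0}}$.

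The key technical input is the explicit formula $\{z^a\bar z^b,z^c\bar z^d\}=2i(bc-ad)z^{a+c-1}\bar z^{b+d-1}$, which turns $\mathcal L$ into a diagonal-plus-shift operator on the basis of resonant monomials. For $n\ge 6$ one finds $\mathcal L(z^k\bar z^l)=6ih_{33}(k-l)z^{k+2}\bar z^{l+2}$; in the real basis of cosine-type $(z^a\bar z^b+z^b\bar z^a)$ and sine-type $i(z^a\bar z^b-z^b\bar z^a)$ monomials, $\mathcal L$ interchanges these two subspaces while annihilating the diagonals, and the cokernel at each degree $d$ consists of the diagonal $(z\bar z)^{d/2}$ together with those ``boundary'' resonant monomials at degree~$d$ whose preimage under $\mathcal L$ would require a negative exponent. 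For $n\in\{3,4,5\}$ the leading operator is instead $\mathrm{ad}_{h_{n0}(z^n+\bar z^n)}$, and its kernel and cokernel on real resonant monomials are computed in the same explicit way. In either case the first round reduces $\tilde h$ to the sum of all diagonal terms $a_j(z\bar z)^j$ together with a short list of surviving cosine-type resonant monomials at each relevant degree.

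The modular restrictions on the coefficients $b_k$ announced in the statement arise from a second round of simplification that exploits the residual freedom to modify each $g_m$ by elements of $\ker\mathcal L$. Applying the secondary operator $\mathcal L':=\mathrm{ad}_{h_{n0}(z^n+\bar z^n)}$ (respectively, $\mathrm{ad}_{h_{33}(z\bar z)^3}$ in the complementary case) to these residual kernel elements produces, by direct computation, additional cosine-type resonant contributions whose indices jump by a fixed arithmetic step governed by $n$; this step is $3$ for $n\ge 6$ and for $n=3$, $4$ for $n=4$, and $5$ for $n=5$, and the image of this second cohomological operation exactly fills the complement of the surviving residue class stated in (\ref{Eq:h22is0nis3})--(\ref{Eq:h22is0nge6}). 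The collapse of the short boundary lists at each degree into a single product $(z^n+\bar z^n)\sum_k b_k(z\bar z)^k$ follows from the same computation.

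Uniqueness comes from considering two simplified normal forms related by a tangent-to-identity canonical map preserving symmetry and reality: degree by degree, the corresponding generating function must lie in the common kernel of $\mathcal L$ and $\mathcal L'$, which turns out to be the trivial one-parameter family of rotations in the $z$-plane; the normalization $b_0>0$ fixes the last remaining $\mathrm{U}(1)$ freedom and forces the generating function to vanish. The main obstacle in the whole argument is the bookkeeping of the second-round cokernel in each case: one must verify, by a direct but slightly tedious index calculus on residue classes modulo the relevant period, that the coefficients excluded by the statement are exactly those that can be killed by the residual kernel freedom -- neither more nor fewer -- and that the reality and $R_{\alpha_0}$-invariance of $g$ are preserved throughout.
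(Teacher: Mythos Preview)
Your two-round scheme has a real gap, most visible for $n\ge 7$. With $\mathcal L=\mathrm{ad}_{h_{33}(z\bar z)^3}$ in the ordinary degree filtration, the operator only shifts $(k,l)\mapsto(k+2,l+2)$ and therefore preserves the harmonic index $(k-l)/n$; it can never remove higher harmonics. For instance, at degree $2n$ (take $n=7$) the only resonant monomial of degree $2n-4$ is $(z\bar z)^{n-2}\in\ker\mathcal L$, so the image of $\mathcal L$ there is zero and both $z^{2n}+\bar z^{2n}$ and $i(z^{2n}-\bar z^{2n})$ survive the first round untouched. Your second round feeds $\ker\mathcal L=\{c(z\bar z)^{m/2}\}$ through $\mathcal L'=\mathrm{ad}_{h_{n0}(z^n+\bar z^n)}$, but $\mathcal L'\bigl((z\bar z)^{m/2}\bigr)$ is a single \emph{sine}-type \emph{first}-harmonic term; it reaches neither the higher harmonics nor the cosine-type survivors you need to kill. (The claim that after the first round only cosine-type terms remain is also wrong: both parities of boundary monomials lie in the cokernel of $\mathcal L$.) For $n\le 5$ there is the additional problem that $h_{33}$ is not assumed nonzero, so your secondary operator $\mathcal L'$ may vanish identically.

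What the paper does instead is a single-round reduction with the leading term chosen so that the homological operator already mixes harmonics. For $n=4,5$ the lowest-order piece is $b_0(z^n+\bar z^n)$ alone, and a direct analysis of the resulting tridiagonal linear system shows that the cokernel at each step is exactly the span of the appropriate $(z\bar z)^p$ and $(z\bar z)^{q}(z^n+\bar z^n)$; the modular gaps in the $a_k,b_k$ fall out of this cokernel computation, not from any second round. For $n=6$ both $a_0(z\bar z)^3$ and $b_0(z^6+\bar z^6)$ already share ordinary degree $6$ and the \emph{combined} operator is used. For $n\ge 7$ the essential trick is a weighted grading $\delta(z^k\bar z^l)=\tfrac{3}{n}|k-l|+\min(k,l)$, which places $(z\bar z)^3$ and $z^n+\bar z^n$ at the same $\delta$-order $3$; the combined operator $\Lambda=\mathrm{ad}_{a_0(z\bar z)^3+b_0(z^n+\bar z^n)}$ then maps $\delta$-order $p$ to $p+2$, and its $b_0$ piece raises the harmonic index by one, so all higher harmonics are eliminated in a single pass. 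Finally, the residual kernel of $\Lambda$ at each step is spanned by a power of the leading Hamiltonian, not by ``rotations''; uniqueness is obtained by composing with the flow of such a power (which fixes $\tilde h$) to push the ambiguity to higher order and then inducting.
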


\begin{proof}
The case $n=3$ is covered by a theorem of \cite{GG2009}.

\medskip\noindent{\bf Case of $n=4$.}
The proposition is proved by induction. Following the classical strategy, 
we perform a sequence of canonical coordinate changes normalising one order of the formal
Hamiltonian at a time.

In the case of $n=4$, any resonant monomial has an even order
because $k=l\pmod 4$ implies that $k+l$ is even.
So any resonant monomial of order $2m$ has the form
\begin{equation}\label{Eq:Pmj}
Q_{m,j}=z^{m+2j}\bar z^{m-2j}
\end{equation}
where $ |j| \le \left \lfloor{\frac m2}\right\rfloor $. Then  any real homogeneous resonant  polynomial
of order $2m$ has the form
 $\sum_{j=-\left\lfloor{\frac m2}\right\rfloor}^{\left\lfloor{\frac m2}\right\rfloor}c_{j}Q_{mj}$
where   $c_{-j}=c_{j}^*$ due to the realness.
Let $\mathcal H^4_m$ denote the set of all such polynomials.
We conclude that the set $\mathcal H^4_m$ is a real vector space and 
 $\dim\mathcal H^4_m=1+2\left\lfloor{\frac m2}\right\rfloor$.

\medskip

The formal Hamiltonian  can be written in the form $h=\sum_{k\ge 2} h_k$
 where $h_k \in \mathcal H^4_k$. In particular, $h_2(z, \bar z)=h_{4,0}z^4+h_{0,4}\bar z^4$
 where $h_{4,0}=h_{0,4}$ as $h_{4,0}$ is assumed to be real.
We see that $h_2$ already has the desired form.
Setting $b_0=h_{40}$ we write
\[ 
h_2=a_0z^2\bar z^2+b_0(z^4+\bar z^4).
\]
All other orders are transformed to the simplified normal form inductively using the Lie series method
(see e.g. \cite{Deprit1969}).

Let $p\ge2$ and take a polynomial $\chi_p\in\mathcal H^4_p$. After the  substitution
$(z, \bar z ) \to \Phi^1_{\chi_p}(z, \bar z )$ the Hamiltonian takes the form
\[
\tilde h=h+L_{\chi_p}h+\sum_{k\ge2}\frac1{k!}L^k_{\chi_p}h\,,
\]
where
\[
L_{\chi_p} h=
-2i\{ h, \chi_p \} =
-2i \left( \frac{\partial h}{\partial z}\frac{\partial \chi_p}{\partial \bar z}
-\frac{\partial h}{\partial \bar z} \frac{\partial \chi_p}{\partial z}  \right) .
\]
We note that the Lie series are not necessarily convergent 
but the formal sum still has a precise meaning as
$L_{\chi_p}$ is a linear operator which increases the order of a monomial and, consequently,
each term in the formal sum is represented by a finite sum. In particular,
$\tilde h_m=h_m$  for $ 2\le m\le p$
and
\begin{equation}\label{Eq:homol}
\tilde h_{p+1}=h_{p+1}+L_{\chi_p}h_2\,.
\end{equation}
Let  $\Lambda :\mathcal H^4_p\to \mathcal H^4_{p+1}$ be the linear operator defined by
\[
\Lambda \chi=L_{\chi}h_2=
 -4ia_0 \left(z\bar z^2 \frac{\partial \chi }{\partial \bar{z}} -z^2 \bar z \frac{\partial \chi }{\partial {z}}
\right)
-8ib_0\left(
z^3 \frac{\partial \chi }{\partial \bar{z}} - \bar z^3 \frac{\partial \chi }{\partial {z}}
\right).
\]
It is sometimes called the {\em homological operator}. 
We show that $\tilde h_{p+1}$ takes the form stated in the proposition 
after an appropriate  choice of  $\chi_p$.

Using the definition of $\Lambda$ we easily check that 
\begin{eqnarray*}
\Lambda  Q_{p,j}&=&
16jia_0 
Q_{p+1,j}
-8ib_0 \left(  (p-2j)Q_{p+1,j+1}-  (p+2j)Q_{p+1,j-1}\right),
\qquad -\frac p2\le j\le\frac p2,
\end{eqnarray*}
where  $Q_{p,j}$ is defined by (\ref{Eq:Pmj}).
Since  $\chi\in\mathcal H^4_p$, we can write it in the form
\(
\chi=\sum_{j=-{\left\lfloor{\frac p2}\right\rfloor}}^{\left\lfloor{\frac p2}\right\rfloor}
c_{j}Q_{p,j}
\), where
\(
 c_{-j}=c_j^*
\). 
So $c_0$ is real and $c_j$ with $j\ge 1$ may be complex. 
Let
\[
h_{p+1}=\sum_{j=-{\left\lfloor{\frac {p+1}{2}}\right\rfloor}}^{\left\lfloor{\frac {p+1}{2}}\right\rfloor}
d_{j}Q_{p+1,j}\,,\qquad \tilde h_{p+1}=\sum_{j=-{\left\lfloor{\frac {p+1}{2}}\right\rfloor}}^{\left\lfloor{\frac {p+1}{2}}\right\rfloor}
\tilde d_{j}Q_{p+1,j}.
\]
Then equation (\ref{Eq:homol}) is equivalent to the system
\begin{equation}\label{Eq:djcj}
 \tilde d_{j}=d_j
 +16jia_0 c_j
 -8ib_0\left[ (p-2j+2)c_{j-1}-(p+2j+2)c_{j+1}   \right]\,.
\end{equation}
were we follow the convention that $c_k=0$ for $k>k_0=\left\lfloor{\frac p2}\right\rfloor$.
Because of the real symmetry it is sufficient to consider 
$0 \le j \le j_0=\left \lfloor{\frac {p+1}{2}}\right\rfloor$.
We note that $k_0=j_0$ if $p$ is even, and
$k_0=j_0-1$ if $p$ is odd.
Then equation (\ref{Eq:djcj}) with $j=j_0$ has the form
\begin{equation}
\begin{array}{ll}
 \tilde d_{j_0}=d_{j_0}
 +16j_0ia_0 c_{j_0}
 -8ib_0 2 c_{j_0-1},
 \qquad&\mbox{$p$ even}
\\
 \tilde d_{j_0}=d_{j_0}
 -8ib_0 c_{j_0-1}.
 \qquad&\mbox{$p$ odd}
 \end{array}
\end{equation}
We also note that the coefficients in the system (\ref{Eq:djcj}) are purely imaginary,
so the equations for real and imaginary
parts of $c_j$ are disjoint. Let $c_j=c_j'+i c_j''$, $d_j=d_j'+i d_j''$, $\tilde d_j=\tilde d_j'+i\tilde  d_j''$. 
First we analyse the imaginary part of the equations:
\begin{eqnarray*}
 \tilde d_{j}''&=&d_j''
 +16ja_0 c_j'
 -8b_0\left[ (p-2j+2)c_{j-1}'-(p+2j+2)c_{j+1}'   \right], \quad c_k'=0 \mbox{ for } k>k_0
 .\end{eqnarray*}
The real symmetry implies that  
the equation with $j=0$ is an identity $\tilde d_0''=d_0''=0$ as $c_{-1}'=c_1'$. 
So we are left with $j_0$ linear relations which involve $k_0+1$  coefficients $c_k'$
with $0\le k \le k_0$.
We immediately note that  since $b_0\ne0$ we can  set $\tilde d_j''=0$ for $1\le j\le j_0$
by choosing $c_{j-1}'$ recursively starting from $j=j_0$. 
 Thus the imaginary part of $d_j$ is eliminated
completely.

If $p$ is even, $k_0=j_0$ and the coefficient $c_{k_0}'$ remains free.
This freedom is related to the obvious fact: the identity $\Lambda h_2^{p/2}=-2i\{h_2,h_2^{p/2}\}=0$
provides a solution for the homogeneous homological equation when $o$ is even.

Now we analyse the real part of the system (\ref{Eq:djcj})  which takes the form
\begin{eqnarray}\label{Eq:dj}
   \tilde d_{j}'&=&d_j'
 -16ja_0 c_j''
 +8b_0\left[ (p-2j+2)c_{j-1}''-(p+2j+2)c_{j+1}''  \right], \quad c_k''=0 \mbox{ for } k>k_0
\end{eqnarray}
The real symmetry implies $c_0''=0$ and $c_{-1}''=-c_1''$, so the first two equations 
take the form
\begin{eqnarray}
 \tilde d_{0}'&=&d_0'
-16 b_0(p+2)c_{1}'',  
\label{Eq:d0}
\\
   \tilde d_{1}'&=&d_1'
 -16a_0 c_1''
 -8b_0(p+4)c_{2}''.
 \label{Eq:d1}
\end{eqnarray}
In total we obtain $j_0+1$ linear relations which involve $k_0$ coefficients $c_k''$
with $1\le k \le k_0$. Then we need to consider the following cases separately.

If $p=2k_0+1$, then $j_0=k_0+1$. The system (\ref{Eq:dj}) with $2\le j\le j_0$
has a non-degenerate matrix
 and consequently there are unique $c_k''$, $1\le k\le k_0$,
such that $\tilde d_j'=0$ for $2\le j\le j_0$. Then monomials of the form
$$
\tilde d_0 Q_{p+1,0}+\tilde  d_1( Q_{p+1,1}+Q_{p+1,-1}),
\qquad
{\tilde d_0,\tilde d_1\in\mathbb R}.
$$
form  a subspace complementary to the image of the homological operator.

If $p=2k_0$, then $j_0=k_0$ and we have to consider two sub-cases.

If $k_0$ is even (or equivalently $p=0\pmod 4$), then we take
equations (\ref{Eq:dj}) with  $1\le j\le j_0$.
This system has a non-degenerate matrix%
\footnote{In these two cases we obtain a
 system of equations with a tridiagonal matrix. The non-degeneracy is proved using the following rule
for the determinant.
Let $\mathrm A=(a_{ij})$ be a square $k_0\times k_0$ matrix with non-zero elements  on the three
main diagonals only: $a_{jj}=\alpha_j$, $a_{j,j+1}=\beta_j$, $a_{j+1,j}=-\gamma_j$.
Then $\det\mathrm A=K_{k_0}$ where the continuants $K_j$ are defined recursively:
$$
K_0=1,\qquad K_1=\alpha_1,\qquad K_j=\alpha_j K_{j-1}+\beta_{j-1}\gamma_{j-1}K_{j-2}
\quad\mbox{for $j\ge2$}.
$$
The conclusion $K_{k_0}\ne0$ is checked with the help of  a straightforward induction.
}
 and consequently there are unique $c_k$, $1\le k\le k_0$,
such that $\tilde d_j=0$ for $1\le j\le j_0$. Then monomials of the form
$$
\tilde d_0 Q_{p+1,0}
\qquad\mbox{with }
{\tilde d_0\in\mathbb R}
$$
form  a subspace complementary to the image of the homological operator.

If $k_0$ is odd  (or equivalently $p=2k_0=2\pmod 4$), then we take
equations (\ref{Eq:dj}) with  $0\le j\le j_0$, $j\ne1$.
This system has a non-degenerate matrix and consequently there are unique $c_k$, $1\le k\le k_0$,
such that $\tilde d_j=0$ for $0\le j\le j_0$, $j\ne1$. Then monomials of the form
$$
\tilde  d_1( Q_{p+1,1}+Q_{p+1,-1})
\qquad\mbox{with }
{\tilde d_1\in\mathbb R}
$$
form  a subspace complementary to the image of the homological operator.

Repeating the coordinate changes inductively we conclude that the
Hamiltonian can be transformed to the form
\begin{eqnarray*}
h&=&b_0(z^4+\bar z^4)+\sum_{\substack{p\ne 3\pmod 4\\
p\ge 3}}c_p Q_{p,0}+\sum_{\substack{p\ne1\pmod4\\ p\ge3}}d_p (Q_{p,1}+ Q_{p,-1})
\end{eqnarray*}
where $c_p$ and $d_p$ are real. The last series has the desired form (\ref{Eq:h22is0nis4}).

In order to complete the proof we need to establish uniqueness of
the series (\ref{Eq:h22is0nis4}). We note that the transformation
constructed in the first part of the proof is not unique
because the kernel of $\Lambda $ is not always empty.
Nevertheless the normalised Hamiltonian is unique. Indeed, suppose that
$h$ can be transformed to two different simplified normal forms
$\tilde h$ and $\tilde h'$ due to non-uniqueness of
transformations to the normal form. Then there is
a canonical transformation $\phi$ such that
\[
\tilde h'=\tilde h\circ\phi\,.
\]
Since the transformation $\phi$ is tangent to identity
there is a formal real-valued Hamiltonian $\chi$ such that
\[
\phi=\Phi^1_{\chi}\,.
\]
Suppose that $2p$ is the lowest order in the formal series $\chi$.
Then $\tilde h$ and $\tilde h'$ coincide
up to the order $2p$, i.e. $h_k=h_k'$ for $2\le k\le p$, and
\[
\tilde h_{p+1}=\tilde h'_{p+1}+\Lambda (\chi_{p})\,.
\]
Since both $\tilde h_{p+1}$ and $\tilde h'_{p+1}$
are in the  subspace complementary to the image of $\Lambda $,
we conclude that $\Lambda (\chi_{p})=0$ and
$\tilde h_{p+1}=\tilde h'_{p+1}$. Then $\chi_{p}$
is in the kernel of $\Lambda$, thus either $\chi_{p}=0$ if $p$ is odd, or $\chi_{p}=c\, h_2^{p/2}$
for some $c\in\mathbb{R}$ if $p$ is even.
Then the change of variables
\[
\tilde \phi=\Phi^1_{-c {\tilde h}^{p/2}}  \circ\Phi^1_{\chi}
\]
also transforms $\tilde h'$ into $\tilde h$: Indeed, $h\circ \Phi^1_{-c {\tilde h}^{p/2}}  \circ\Phi^1_{\chi}=
h \circ\Phi^1_{\chi}=\tilde h'$.
It is easy to check that there is a formal Hamiltonian $ \chi'$
such that $\tilde \phi=\Phi^1_{\chi'}$ and the lowest order in $\chi'$ 
is at least  $2p+2$.

Repeating the arguments inductively we see that $\tilde h$ and $\tilde h'$
coincide at all orders. Hence the simplified normal form is unique.

\medskip\noindent{\bf Case of $n=5$.}
Let $\mathcal H^5_m$ denote the set of all real-valued polynomials
which can be represented as a sum of resonant monomials
of the order $m$. The Hamiltonian $ h=\sum_{k\ge 5} h_k $, $ h_k \in \mathcal H_k^5 .$
The leading term has the form
\[ h_5=b_0(z^5+\bar z^5).\]
The homological operator takes the form
\[
\Lambda \chi =-2i \{ h_5, \chi \}\,.
\]
It is checked directly that $\Lambda : \mathcal H^5_p\to \mathcal H^5_{p+3}$.
Then we use  induction. Suppose the Hamiltonian $h$ has the desired form
for all orders up to $p+2$ for some $p\ge3$. Then a change of variables
generated by $\chi\in\mathcal H^5_p$ gives a new Hamiltonian
$\tilde h=h\circ \Phi^1_{\chi}$ with the properties
\[
\tilde h_m=h_m \quad {\rm for \ } 5\le m\le p+2
\]
and
\begin{equation}\label{Eq:homol5}
\tilde h_{p+3}=h_{p+3}+\Lambda {\chi}\,.
\end{equation}
We can chose $\chi$ to ensure that $\tilde h_{p+3}$
is in a complement to the image of $\Lambda$.
The results of the study of $\Lambda $ are summarised in Table~\ref{Ta:n5}.
\begin{table}[ht]
\begin{center}
\begin{tabular}{|c|c|c|c|c|}
\hline
$p$ & $\dim\mathcal H^5_p$ & $\dim\mathcal H^5_{p+3}$ & $\dim\ker\Lambda $ &
co{-}dim\ Image $ \Lambda $ \\
\hline
$5k$   & $k+1$ & $k$ &   $1$ & $0$ \\
$5k+1$ & $k  $ & $k+1$ & $0$ & $1$ \\
$5k+2$ & $k+1$ & $k+2$ & $0$ & $1$   \\
$5k+3$ &  $k$ & $k+1$ & $0$ & $1$ \\
$5k+4$ & $k+1$ & $k+2$ & $0$ & $1$  \\
\hline
\end{tabular}
\end{center}
\caption{Properties of the homological operator for $n=5$.\label{Ta:n5}}
\end{table}

The  complement to the image of $\Lambda$ is empty for $p=0\pmod5$. 
Otherwise it can be chosen 
as a real multiple of $ (z \bar z)^{(p+3)/2}$ if $p$ is odd, and of $(z \bar z)^{(p-2)/2} (z^5+\bar z^5 )$
if $p$ is even.

\medskip\noindent{\bf Case of $n=6$.}
In this case, $k=l\pmod n$ implies $k+l$ is even.
Let $\mathcal H^6_m$ denote the set of all real-valued polynomials
which can be represented as a sum of resonant monomials
of the order $2m$.

The Hamiltonian $h$ can be written as $h=\sum_{k\ge 3} h_k$, where $h_k \in \mathcal H^6_k$.
In particular,
 $h_3(z, \bar z)=h_{3,3} z^3 \bar z^3 +h_{6,0}z^6+h_{0,6}\bar z^6$
 where $h_{6,0}>0$. To slightly shorten the notation let us write
\[ 
h_3 (z, \bar z )=a_0 z^3 \bar z^3 +b_0(z^6+\bar z^6),
\]
where $a_0=h_{3,3}$ and $b_0=h_{6,0}$ are real. Then 
the homological operator $\Lambda : \mathcal H^6_p\to \mathcal H^6_{p+2}$ is defined by
\[
\Lambda \chi =-2i \{ h_3, \chi \}.
\]
Then
\[
\tilde h_m=h_m \quad {\rm for \ } 3\le m\le p+1
\]
and
\begin{equation}\label{Eq:homol6}
\tilde h_{p+2}=h_{p+2}+\Lambda {\chi}\,.
\end{equation}
It is convenient to denote the resonant monomials of order $2m$ by
\begin{equation}\label{Eq:Pmj6}
Q_{m,0}=z^{m}\bar z^{m}\qquad\mbox{and}\qquad
Q_{m,j}=z^{m+3j}\bar z^{m-3j}
\end{equation}
for $0 < |j| \le \left \lfloor{\frac m3}\right\rfloor $. Then any resonant polynomial which contains only monomials of the order $2m$
has the form $\sum_{j=-\left\lfloor{\frac m3}\right\rfloor}^{\left\lfloor{\frac m3}\right\rfloor}c_{j}Q_{mj}$.
Taking into account that  $c_{-j}=c_{j}^*$ due to real-valuedness,
we conclude that the real dimension of the space 
$\dim\mathcal H^6_m=1+2\left\lfloor{\frac m3}\right\rfloor$.

We compute the action of the homological operator $\Lambda $ on monomials:
\begin{eqnarray*}
\Lambda Q_{p,0}&=&-12ib_0 \left(pQ_{p+2,1}-pQ_{p+2,-1} \right) ,\\
\Lambda Q_{p,j}&=&-12i \left( -3ja_0Q_{p+2,j}+b_0(p-3j)Q_{p+2,j+1} -b_0(p+3j)Q_{p+2,j-1} \right).
\end{eqnarray*}
The results of the study of $\Lambda $ are summarised in Table~\ref{Ta:n6}.

\begin{table}[ht]
\begin{center}
\begin{tabular}{|c|c|c|c|c|}
\hline
$p$ & $\dim\mathcal H^n_p$ & $\dim\mathcal H^n_{p+2}$ & $\dim\ker \Lambda $ &
co{-}dim\ Image $ \Lambda $ \\
\hline
$3k$   & $2k+1$ & $2k+1$ &   $1$ & $1$ \\
$3k+1$ & $2k+1  $ & $2k+3$ & $0$ & $2$ \\
$3k+2$ & $2k+1$ & $2k+3$ & $0$ & $2$   \\
\hline
\end{tabular}
\end{center}
\caption{Properties of the homological operator for $n \ge 6$.\label{Ta:n6}}
\end{table}

If $p=3k$ a complement to the image of $\Lambda $ is generated by $ (z \bar z)^{p+2} $ and,
if $p=3k+1$ or $p=3k+2$, by $ (z \bar z)^{p+2} $ and $(z \bar z)^{p-1}(z^6+\bar z^6)$.

\bigskip\noindent{\bf Case of $n\ge7$.}
It is convenient to group together terms of the same $\delta$-order.
For a monomial $z^k\bar z^l$ we define its $\delta$-order by
\begin{equation}\label{Eq:deltamon}
\delta(z^k\bar z^l)=
3\left|\frac{k-l}n\right|+\min\{\,k,l\,\}
=
\frac12(k+l)-\frac{n-6}{2n}|k-l|\,.
\end{equation}
Let $\mathcal H^n_m$ denote the set of all real-valued polynomials
which can be represented as a sum of resonant monomials
of the $\delta$-order $m$.

It is convenient to denote resonant monomials by
\begin{equation}\label{Eq:Pmj7}
Q_{m,j}=z^{m+nj-3j}\bar z^{m-3j}\qquad\mbox{and}\qquad
Q_{m,-j}=z^{m-3j}\bar z^{m+nj-3j}
\end{equation}
for $0\le j\le \left\lfloor{\frac m3}\right\rfloor$.
Then any resonant polynomial which contains only monomials of the $\delta$-order $m$
has the form $\sum_{j=- \left\lfloor{\frac m3}\right\rfloor}^{ \left\lfloor{\frac m3}\right\rfloor}c_{j}Q_{mj}$.
Taking into account that  $c_{-j}=c_{j}^*$ due to real-valuedness,
we conclude that the real dimension of the space 
$\dim\mathcal H^n_m=1+2\left\lfloor{\frac m3}\right\rfloor$. 
We can write the original Hamiltonian in the form $ h=\sum_{p \ge 3} h_p $
where $h_p \in \mathcal H_p^n $. The leading term has the form
\begin{equation}\label{Eq:h3n}
h_3 (z, \bar z)= a_0z^3\bar z^3 +b_0 (z^n + \bar z^n).
\end{equation}
The homological operator $\Lambda : \mathcal H^n_p\to \mathcal H^n_{p+2}$ for $p \ge 3$
is defined by
\[
\Lambda \chi =\left [ -2i \{ h_3, \chi \} \right ]_{p+2},
\]
where $[g ]_p$ denotes the sum of all terms of $g$ which have  $\delta$-order $p$.
Using (\ref{Eq:h3n}) we compute the action of the homological operator $\Lambda$ on monomials:
\begin{eqnarray*}
\Lambda Q_{p,0}&=&-2inb_0p \left ( Q_{p+2,1} - Q_{p+2,-1} \right ),
\\
\Lambda Q_{p,j}&=&-2in \left (-3a_0jQ_{p+2,j} +b_0(p-3j)Q_{p+2,j+1} \right ).
\end{eqnarray*}
Let $\chi \in \mathcal H^n_p $. Then it can written in the form
\[
\chi=c_{0}Q_{p,0}+\sum_{j=1}^{\left\lfloor{\frac p3}\right\rfloor}
(c_{j}Q_{p,j}+c_{j}^* Q_{p,-j})
\]
and $\Lambda \chi \in \mathcal H^n_{p+2} $ has the form
\[
\Lambda \chi =d_{0}Q_{p+2,0}+\sum_{j=1}^{\left\lfloor{\frac{p+2}{3}}\right\rfloor}
(d_{j}Q_{p+2,j}+d_{j}^* Q_{p+2,-j}).
\]
The conclusions of the study of $\Lambda$ are the same as in the case of $n=6$ and 
therefore are described by  Table~\ref{Ta:n6}.
It can be checked that, if $p=3k$ a complement to the image of $\Lambda $ is 
generated by $ (z \bar z)^{p+2} $ and,
if $p=3k+1$ or $p=3k+2$,  it is generated by  $ (z \bar z)^{p+2} $ and $ (z \bar z)^{p-1}(z^n+\bar z^n)$.

\medskip

Finally we note that the arguments used to prove the uniqueness on the simplified normal form 
in the case $n=4$ can be
easily modified to prove the uniqueness for $n\ge5$.
\end{proof}

\bigskip

\noindent{\bf Remark.} In the case of $n=6$ it is possible to transform the
Hamiltonian to  an alternative normal form, namely,
\[
\tilde h(z,\bar z)=(z\bar{z})^{3}\sum_{\substack{k\ge0\cr k\ne 5\pmod 6}} a_k {(z\bar{z})}^k
+(z^{n}+\bar z^n) \sum_{\substack{k\ge0\cr k\ne 2\pmod 6}} b_k {(z\bar{z})}^k .
\]
The coefficients of this simplified normal form are unique. This normal form
has an advantage over the form proposed in Proposition~\ref{Pro:h22is0}
as  the condition $h_{33} \ne 0$ can be dropped from the assumptions. 
The derivation of this version of the normal form is similar
to the case of $n=4$ described above.


\subsection{Families with a small twist\label{Se:Familiesh22}}

\begin{proposition}\label{Pro:Famh22is0}
Let
\begin{equation} \label{Eq:hfam}h(z,\bar z; \delta , \nu )=\delta z \bar z + \nu (z \bar z)^2 +
\sum_{\substack{k,l,m,j \ge 0\cr k=l\pmod n}} h_{klmj}z^k \bar z^l \delta^m \nu^j
\end{equation}
be a formal series with $h_{klmj}=h_{lkmj}^*$, $h_{11mj}=h_{22mj}=0$,  $h_{n000}\ge 0$
(and additionally $h_{3300}\ne0$ for $n\ge6$ only).
There exists a formal canonical change of variables which
transforms the Hamiltonian $h$
into
\begin{itemize}
\item
if $n=3$
\begin{eqnarray*}
\tilde h(z,\bar z; \delta , \nu )= \delta z\bar z +\nu {(z\bar z)}^2 +
(z\bar{z})^{3}\sum_{\substack{k,m,j\ge0\cr k\ne 2\pmod 3}} a_{kmj} {(z\bar{z})}^k \delta^m  \nu^j   \\
+  (z^{3}+\bar z^3) \sum_{\substack{k,m,j\ge0\cr k\ne 2\pmod 3}} b_{kmj} {(z\bar{z})}^k  \delta^m  \nu^j,
\end{eqnarray*}%
\item
if $n=4$
\begin{eqnarray*}
\tilde h(z,\bar z; \delta , \nu )=\delta z\bar z +\nu {(z\bar z)}^2 +
(z\bar{z})^{4}\sum_{\substack{k,m,j\ge0\cr k\ne 3\pmod 4}} a_{kmj} {(z\bar{z})}^k \delta^m  \nu^j
\\ +(z^{4}+\bar z^4) \sum_{\substack{k,m,j\ge0\cr k\ne 3\pmod 4}} b_{kmj} {(z\bar{z})}^k \delta^m  \nu^j ,
\end{eqnarray*}
\item
if $n=5$
\begin{eqnarray*}
\tilde h(z,\bar z; \delta , \nu )=\delta z\bar z +\nu {(z\bar z)}^2
+(z\bar{z})^{3}\sum_{\substack{k,m,j\ge0\cr k\ne 1\pmod 5}} a_{kmj} {(z\bar{z})}^k \delta^m  \nu^j
\\ +(z^{5}+\bar z^5) \sum_{\substack{k,m,j\ge0\cr k\ne 4\pmod 5}} b_{kmj} {(z\bar{z})}^k \delta^m  \nu^j ,
\end{eqnarray*}
\item
if $n\ge 6$
\begin{eqnarray*}
\tilde h(z,\bar z; \delta , \nu )=\delta z\bar z +\nu {(z\bar z)}^2 +
(z\bar{z})^{3}\sum_{k,m,j\ge0} a_{kmj} {(z\bar{z})}^k  \delta^m  \nu^j
\\ +(z^{n}+\bar z^n) \sum_{\substack{k,m,j\ge0\cr k\ne 2\pmod 3}}  b_{kmj} {(z\bar{z})}^k \delta^m  \nu^j .
\end{eqnarray*}
The coefficients of the series $a_{kmj}$ and $b_{kmj}$ are real and defined uniquely by the series $h$
provided the leading order is normalised to ensure $b_{000}>0$.
\end{itemize}
\end{proposition}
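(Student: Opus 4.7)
The plan is to extend the Lie series proof of Proposition~\ref{Pro:h22is0} to the parameter-dependent setting by treating $\delta$ and $\nu$ as additional formal variables. I assign weights $w(z) = w(\bar z) = 1$, $w(\delta) = 2$, $w(\nu) = 1$ so that each of the three explicitly leading terms $\delta z\bar z$, $\nu(z\bar z)^2$ and the parameter-free cubic piece $a_{000}(z\bar z)^3 + b_{000}(z^n+\bar z^n)$ (or its analogue for $n\le 5$) sits at the same $\delta$-order~$3$. Let $\mathcal{H}^{n,\mathrm{fam}}_m$ denote the space of real-valued resonant polynomials in $(z, \bar z, \delta, \nu)$ of total $\delta$-order $m$; the Hamiltonian then decomposes as $h = \sum_{p\ge 3} h_p$ with $h_p \in \mathcal{H}^{n,\mathrm{fam}}_p$, and $h_3$ is already in the required form.

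The induction step closely parallels Proposition~\ref{Pro:h22is0}. Assuming the Hamiltonian has been normalised up to $\delta$-order $p+1$, I look for $\chi_p \in \mathcal{H}^{n,\mathrm{fam}}_p$ such that the canonical change $\Phi^1_{\chi_p}$ brings $h_{p+2}$ into a prescribed complement of $\operatorname{im}\Lambda$, where $\Lambda\chi = -2i\{h_3,\chi\}$ decomposes as $\Lambda_0 + \Lambda_\delta + \Lambda_\nu$: the piece $\Lambda_0 = -2i\{a_{000}(z\bar z)^3 + b_{000}(z^n+\bar z^n),\cdot\}$ acts on each $\delta^m\nu^j$-coefficient of $\chi$ exactly as the parameter-free operator of Proposition~\ref{Pro:h22is0}; the diagonal piece $\Lambda_\delta$ multiplies a resonant monomial $z^k\bar z^l\delta^m\nu^j$ by $-2i\delta(l-k)$, sending the $(m,j)$-layer of $\chi$ into the $(m+1,j)$-layer of $\Lambda\chi$; and $\Lambda_\nu$ raises the $(z,\bar z)$-degree by $2$ while multiplying by $\nu$, sending the $(m,j)$-layer to the $(m,j+1)$-layer. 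I would solve the homological equation layer-by-layer in the lexicographic order of $(m,j)$: the bottom layer $(0,0)$ is exactly the parameter-free homological equation handled by Proposition~\ref{Pro:h22is0}, and each higher layer is solved by applying the same projection onto the complement of $\operatorname{im}\Lambda_0$, with the already-determined inter-layer contributions of $\Lambda_\delta$ and $\Lambda_\nu$ from lower layers of $\chi_p$ moved to the known right-hand side.

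The main obstacle is verifying that this triangular scheme is indeed consistent, i.e.\ that at each higher layer the modified right-hand side can still be projected onto the same complement described in Proposition~\ref{Pro:h22is0}; this follows at once because at layer $(m,j)$ one is simply solving $\Lambda_0 \chi^{(m,j)} = R^{(m,j)}$ modulo the prescribed complement, and the parameter-free theory guarantees solvability regardless of the form of the right-hand side $R^{(m,j)}$. Finally, uniqueness follows by repeating the kernel-absorption argument from Proposition~\ref{Pro:h22is0}: if $\tilde h$ and $\tilde h'$ are two simplified normal forms conjugate via $\Phi^1_\chi$, then the lowest non-vanishing order $\chi_p$ must lie in $\ker\Lambda$, which consists of multiples of appropriate powers of $h_3$ (since $\{h_3, h_3^k\} = 0$ by the Jacobi identity); composing with the canonical transformation generated by a multiple of such a power preserves $\tilde h$ and shifts $\chi$ to strictly higher order, so the induction closes and the simplified normal form is unique.
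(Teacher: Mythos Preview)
Your scheme is workable but it is organised differently from the paper, and the difference is exactly in the weights you assign to the parameters.  You take $w(\delta)=2$, $w(\nu)=1$ so that $\delta z\bar z$, $\nu(z\bar z)^2$ and the parameter-free block $a_{000}(z\bar z)^3+b_{000}(z^n+\bar z^n)$ all sit at the same leading $\tilde\delta$-order~$3$; this forces your homological operator to split as $\Lambda_0+\Lambda_\delta+\Lambda_\nu$ and requires the triangular layer-by-layer elimination you describe.  The paper instead assigns weight $3$ to $\delta$ and $2$ to $\nu$ (for $n\ge 6$; for $n\le 5$ it uses $\tilde\delta(z^k\bar z^l\delta^m\nu^j)=k+l+4m+2j$).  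With that choice $\delta z\bar z$ and $\nu(z\bar z)^2$ land at order~$4$, already in the stated normal form, and the leading piece $h_{(3)}$ is purely parameter-free.  Consequently the homological operator coincides with the $\Lambda$ of Proposition~\ref{Pro:h22is0} and each $(M,J)$-layer of $h_{(p)}$ is normalised \emph{independently} by a change generated by $\delta^M\nu^J\chi$ with $\chi\in\mathcal H^n_{p-3M-2J-2}$ parameter-free; there is no inter-layer coupling to unwind.  What the paper's choice buys is brevity: the family case becomes a direct corollary of the individual-map case, one layer at a time.  What your choice buys is nothing in particular---the triangular argument is correct but strictly more work.

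Two smaller points.  First, your declared weights $w(z)=w(\bar z)=1$ only achieve the claimed common order~$3$ if you are silently using the $\delta$-order~(\ref{Eq:deltamon}) on the $(z,\bar z)$ part rather than the naive degree; for $n\le 5$ your single assignment does not line the three terms up at all, so the ``analogue'' you allude to needs its own grading, as the paper provides.  Second, in your uniqueness step the claim that $\ker\Lambda$ is spanned by powers of $h_3$ needs care: with your enlarged $h_3$ (and already for the paper's $h_3$ when $n\ge7$) those powers are not homogeneous in the grading, so one must work with their leading parts; the kernel-absorption argument still goes through but the identification of the kernel is less immediate than you suggest.
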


\goodbreak
\begin{proof}
\begin{itemize}
\item
$n \ge 6$

Let us define the $\tilde \delta$-order by
\[ \tilde \delta (z^k \bar z^l \delta^m \nu^j) = \delta(z^k \bar z^l)+3m+2j, \]
where $\delta(z^k \bar z^l)$ is defined by (\ref{Eq:deltamon}). Then the lowest $\tilde \delta$-order in (\ref{Eq:hfam}) is 3. Let
\[ h_{(3)}(z,\bar z; \delta , \nu ) = h_3(z,\bar z)=a(z\bar z)^3+b(z^n+\bar z^n),\]
where $a=h_{3300}$ and $b=|h_{n000}|$.
The term of the fourth order is
\[ h_{(4)}(z,\bar z; \delta , \nu )= \delta z\bar z +\nu {(z\bar z)}^2+a_1{(z\bar z)}^4 \]
and already has the declared form.

All higher  $\tilde \delta$-orders are transformed to the desired form inductively. 
Let $h_{(s)}$ has the desired form up to $\tilde \delta$-order $p-1$. 
The term of $\tilde \delta$-order $p$ can be written as
\[ h_{(p)}(z,\bar z; \delta , \nu )=\sum_{m=0}^{\left\lfloor{\frac {p-3}{3}}\right\rfloor} \delta^m
\sum_{j=0}^{\left\lfloor{\frac {p-3m-3}{2}}\right\rfloor} \nu^jh_{mjp}(z,\bar z) \, ,
\]
where $h_{mjp}(z,\bar z) \in \mathcal H^n_{p-3m-2j}$.

Let $\chi \in \mathcal H^n_{p-3M-2J-2}$ for some $M$ and $J$.
After the change of variables $(z, \bar z) \mapsto \Phi_\chi^{\delta^M \nu^J}(z, \bar z)$ the Hamiltonian takes the form
\[
\tilde h = h+\delta^M \nu^J L_\chi h + \sum_{l \ge 2} \frac{\delta^{Ml} \nu^{Jl}}{l!} L_\chi^l h .
\]
It's not difficult to see that
\[ \tilde h_{(s)}(z,\bar z; \delta , \nu )=h_{(s)}(z,\bar z; \delta , \nu ) \quad {\mbox {for } }s \le p-1 \]
and
\[ \tilde h_{mjp}(z,\bar z) =h_{mjp}(z,\bar z) \quad {\mbox {for } } m \ne M,j\ne J, \]
\[
\tilde h_{MJp}=h_{MJp} +\Lambda \chi,
\]
where $\Lambda$ is the homological operator from Section \ref{Se:h22is0}.
It was been shown that there exists such $\chi$ that $\tilde h_{MJp}$ has the desired form.

\item
$n=3,4,5$.

The Hamiltonian $h$ can be written as
\[
h(z, \bar z; \delta , \nu )=\delta z \bar z + \nu ( z \bar z)^2 +b(z^n +\bar z^n)  +\sum_{m+j+s > n} \delta^m \nu^j h_{mjs} (z, \bar z),
\]
where $h_{mjs} (z, \bar z)$ is the homogeneous polynomial of order $s$.
Now let $\tilde \delta$-order be
\[ \tilde \delta (z^k \bar z^l \delta^m \nu^j) = k+l+4m+2j. \]

Let $\chi$ is a homogenous resonance polynomial of order $p-4M-2J \ge 1$ for some $M$ and $J$.

After the change of variables $(z, \bar z) \mapsto \Phi_\chi^{\delta^M \nu^J}(z, \bar z)$

\[
\tilde h = h+\delta^M \nu^J L_\chi h + \sum_{l \ge 2} \frac{\delta^{Ml} \nu^{Jl}}{l!} L_\chi^l h.
\]

So
\[\tilde h_{mjs}=h_{mjs} \mbox{\  for \ } \left[ \begin{array}{l}
4m+2j+s<p \\ 4m+2j+s=p \mbox{\ if\ } m \ne M \mbox{\ and\ } j\ne J.\end{array} \right.\]

\[
\tilde h_{M,J,p+(n-2)-4M-2J}=h_{M,J,p+(n-2)-4M-2J} +\Lambda \chi,
\]
where $\Lambda$ is the homological operator from Section \ref{Se:h22is0} for $n=3,4$ or $5$ respectively.
It was been shown that there exists such $\chi$ that $\tilde h_{p+(n-2), K,M}$ has the desired form.
\end{itemize}
The uniqueness can be proved using the same type of arguments as we used
 for  individual maps in Section~\ref{Se:h22is0}.
\end{proof}

\end{document}